\begin{document}

\newtheorem{theorem}{Theorem}[section]
\newtheorem{prop}[theorem]{Proposition}
\newtheorem{lemma}[theorem]{Lemma}
\newtheorem{definition}[theorem]{Definition}
\newtheorem{corollary}[theorem]{Corollary}
\newtheorem{example}[theorem]{Example}
\newtheorem{remark}[theorem]{Remark}
\newcommand{\ra}{\rightarrow}
\renewcommand{\theequation}
{\thesection.\arabic{equation}}

\def\Xint#1{\mathchoice
  {\XXint\displaystyle\textstyle{#1}}%
  {\XXint\textstyle\scriptstyle{#1}}%
  {\XXint\scriptstyle\scriptscriptstyle{#1}}%
  {\XXint\scriptscriptstyle\scriptscriptstyle{#1}}%
  \!\int}
\def\XXint#1#2#3{{\setbox0=\hbox{$#1{#2#3}{\int}$}
  \vcenter{\hbox{$#2#3$}}\kern-.5\wd0}}
\def\ddashint{\Xint=}
\def\dashint{\Xint-}

\def \HS {H^1_{w}(X)}
\def\HSL  {H^{1}_{L,w}(X)}
\def \R  {\mathbb{R}^n}
\def \RN  {\mathbb{R}^{n_1}}
\def \RM  {\mathbb{R}^{n_2}}
\def \HL {H^1_{L,w}(X)}
\def \HHL {\mathbb{H}^{1,q}_{L,at,w}(X)}
\def \LP {L^p_w}
\def\ls{\lesssim}
\def \TentRn {T_{2,w}^p(\mathbb{R}^n)}

\title[A q-atomic decomposition of weighted tent spaces]
{ A q-atomic decomposition of weighted tent spaces on spaces of homogeneous type and its application }

\thanks{{\it {\rm 2010} Mathematics Subject Classification:} Primary: 42B35, 42B30; Secondary: 47F05.}
\thanks{{\it Key words and phrase:} Weighted tent spaces; Atomic decomposition; Weighted Hardy space associated to operators;  Spaces of homogeneous type}
\thanks{* Corresponding author}

\author[L.Song, L.C. Wu]{ Liang Song \ and \ Liangchuan Wu*}

\address{
   Liang Song,
    Department of Mathematics,
    Sun Yat-sen University,
    Guangzhou, 510275,
    P.R.~China}
\email{ songl@mail.sysu.edu.cn}

\address{
   Liangchun Wu,
    Department of Mathematics,
    Sun Yat-sen University,
    Guangzhou, 510275,
    P.R.~China}
\email{wulchuan@mail2.sysu.edu.cn}

\maketitle

\begin{abstract}
 The theory of tent spaces on $\R$ was introduced  by Coifman, Meyer and Stein, including atomic decomposition, duality theory and so on.  Russ generalized the atomic decomposition for tent spaces to the case of  spaces of homogeneous type $(X,d,\mu)$.   The main purpose of  this paper is to extend the results of Coifman, Meyer, Stein and Russ to weighted version.  More precisely,   we obtain a  $q$-atomic decomposition  for  the weighted tent spaces $T^p_{2,w}(X)$, where $0<p\leq 1, 1<q<\infty,$ and $w\in A_\infty$. As an application, we give an atomic decomposition for  weighted Hardy spaces associated to nonnegative self-adjoint operators  on  $X$.
 \end{abstract}

 \medskip

\section {Introduction}
\setcounter{equation}{0}

\bigskip

 Tent spaces on $\R$ were introduced and developed by Coifman, Meyer and Stein in \cite{CMS83,CMS}. These spaces  are very powerful tools in harmonic analysis, when one considers such problems as  atomic decomposition for Hardy space $H^1(\mathbb{R}^n)$, the duality theory between $H^1(\mathbb{R}^n)$ and ${\rm BMO}(\R)$,  Carleson measures. Spaces of homogeneous type were introduced by Coifman and Weiss in \cite{CW}, which are more general settings than $\R$.  The atomic decomposition for tent spaces on spaces of homogeneous type was established  by  Russ in \cite{Ru06}.  The main purpose of  this paper is to give a  $q$-atomic decomposition  for  the weighted tent spaces $T^p_{2,w}(X)$ on  spaces of homogeneous type where $0<p\leq 1, 1<q<\infty,$ and $w\in A_\infty$, which
 will extend the results of Coifman, Meyer, Stein and Russ to weighted tent spaces.

Let us introduce the setting of our paper.   $(X,d,\mu)$ is a metric measure space endowed with a distance $d$ and a nonnegative Borel doubling measure $\mu$ on $X$. Recall that a measure is doubling provided that there exists a constant $C>0$ such that for all $x\in X$ and for all $r>0$,
\begin{equation}\label{eqn:doubling}
	  V(x,2r)\leq C V(x,r)<\infty,
\end{equation}
where $B(x,r)=\{y\in X:\, d(x,y)<r\}$ and $V(x,r)=\mu(B(x,r))$.

Now we recall the definition and atomic decomposition of tent spaces on $X$.

Denote
$$
    \Gamma(x)=\left\{(y,t)\in X\times (0,+\infty):\,  d(x,y)< t\right\}, \ \ {\rm for \ } x\in X.
$$
\begin{definition}\label{def:unweightedTent}
For $p\in (0,\infty)$, the tent space $T_{2}^p(X)$ is defined by
$$
    T_{2}^p (X)=\big\{F(x,t):\  \ F \ {\rm  is \  a\  measurable \  function \ on} \  X\times (0,+\infty), \   {\mathcal A}(F)\in L^p(X)\big\},
$$
equipped with a norm
$$
    \|F\|_{T_{2}^p(X)}:=\left( \int_X \big({\mathcal A}(F)(x)\big)^p \, d\mu(x) \right)^{1/p},
$$
where
$$
    \mathcal{A}(F)(x):=\left(\iint_{\Gamma(x)} |F(y,t)|^2 \,\frac{d\mu(y)}{V(x,t)}\frac{dt}{t}\right)^{1/2}.
$$
\end{definition}

\begin{definition}
Let $p\in (0,1] $.  A measurable function $\mathfrak{a}$  defined in $X\times (0,+\infty)$  is said to be a  2-atom of $T^p_2(X)$ if there exists a ball $B\subseteq X$ such that 

{\rm (i)} \  ${\rm supp }\, \mathfrak{a} \subseteq T(B):=\{(x,t)\in X\times (0,+\infty): \, d(x, X\setminus B)\geq  t\}$;

{\rm (ii)} \  $\displaystyle \left(\iint_{X\times (0,+\infty)}|\mathfrak{a}(y,t)|^2\,\frac{d\mu(y)dt}{t}\right)^{1/2}\leq V(B)^{1/2-1/p}$.
\end{definition}

 Russ \cite{Ru06} obtained the following atomic decomposition theorem for $T_{2}^p(X)$, which generalized the classical result of Coifman, Meyer and Stein \cite{CMS} on $\R$.

\begin{theorem}\label{thm:Russ}
Let $0<p\leq 1$.  For all $f\in T_{2}^p(X)$, there exist a sequence  $\{\lambda_n\}_{n\in \mathbb{N}}\in l^p$ and a sequence of $2$-atoms of  $T_{2}^p(X)$,  $\{a_n\}_{n\in \mathbb{N}}$,  such that $f=\sum_{n=0}^\infty \lambda_na_n$, and
$$
    \sum_{n=0}^\infty|\lambda_n|^p\leq C_p \|f\|_{T_{2}^p(X)}^p.
$$
\end{theorem}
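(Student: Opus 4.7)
My plan is to follow the original Coifman--Meyer--Stein strategy, making the adaptations needed for a general space of homogeneous type (in particular, replacing dyadic cubes by a Whitney-type covering). Let $f\in T_2^p(X)$, and for each $k\in\mathbb{Z}$ put $O_k=\{x\in X:\mathcal{A}(f)(x)>2^k\}$, which is open because $\mathcal{A}f$ is lower semicontinuous and of finite measure since $\mathcal{A}f\in L^p(X)$. Introduce the maximal-function enlargement $O_k^{*}=\{x:M(\chi_{O_k})(x)>1/2\}$ and the associated tent $\hat{O}_k^{*}=\{(y,t)\in X\times(0,\infty):B(y,t)\subseteq O_k^{*}\}$, noting that $\mu(O_k^{*})\leq C\mu(O_k)$ by the weak type $(1,1)$ bound for $M$. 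Since (up to a null set) $\{f\neq 0\}\subseteq\bigcup_k\hat{O}_k^{*}$, we obtain the telescoping decomposition $f=\sum_{k\in\mathbb{Z}}f\cdot\chi_{\hat{O}_k^{*}\setminus\hat{O}_{k+1}^{*}}$.

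Next I apply a Whitney-type covering lemma, available on $(X,d,\mu)$ by Coifman--Weiss, to cover each open set $O_k^{*}$ by a family of balls $\{B_{k,j}=B(x_{k,j},r_{k,j})\}_j$ of bounded overlap with $r_{k,j}\sim d(x_{k,j},X\setminus O_k^{*})$. A fixed dilate $\tilde{B}_{k,j}:=\kappa B_{k,j}$ has the property that the $T(\tilde{B}_{k,j})$ cover the annular tent piece $\hat{O}_k^{*}\setminus\hat{O}_{k+1}^{*}$. Using this cover I partition the annulus into disjoint sets $E_{k,j}\subseteq T(\tilde{B}_{k,j})$ (for instance $E_{k,j}:=(T(\tilde{B}_{k,j})\cap(\hat{O}_k^{*}\setminus\hat{O}_{k+1}^{*}))\setminus\bigcup_{j'<j}T(\tilde{B}_{k,j'})$) and define $\lambda_{k,j}:=C\,2^k V(\tilde{B}_{k,j})^{1/p}$ and $a_{k,j}:=\lambda_{k,j}^{-1}f\cdot\chi_{E_{k,j}}$, so that condition~(i) of the atom definition holds by construction and $f=\sum_{k,j}\lambda_{k,j}a_{k,j}$.

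The main work lies in verifying the $L^2$ size condition (ii). Here the key identity, obtained by Fubini and the doubling property ($V(x,t)\sim V(y,t)$ for $x\in B(y,t)$), is
$$\int_{X\setminus O_{k+1}^{*}}(\mathcal{A}f(x))^2\,d\mu(x)\gtrsim\iint_{G_{k+1}}|f(y,t)|^2\,\frac{d\mu(y)\,dt}{t},$$
where $G_{k+1}$ consists of those $(y,t)$ for which $\mu(B(y,t)\setminus O_{k+1}^{*})/V(y,t)$ is bounded below by a fixed constant. Arranging the enlargement parameters so that the annular tent $\hat{O}_k^{*}\setminus\hat{O}_{k+1}^{*}$ is contained in $G_{k+1}$ (via the maximal-function characterization of $O_{k+1}^{*}$) and combining with the pointwise bound $\mathcal{A}f\leq 2^{k+1}$ on $X\setminus O_{k+1}$, together with the bounded overlap of the Whitney cover, yields $\iint_{E_{k,j}}|f|^2\,d\mu\,dt/t\leq C\,2^{2(k+1)}V(\tilde{B}_{k,j})$, which is exactly what (ii) demands.

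Finally, bounded overlap gives $\sum_j V(\tilde{B}_{k,j})\lesssim\mu(O_k^{*})\lesssim\mu(O_k)$, so $\sum_{k,j}|\lambda_{k,j}|^p\lesssim\sum_k 2^{kp}\mu(O_k)\lesssim\|\mathcal{A}f\|_{L^p}^p=\|f\|_{T_2^p(X)}^p$ by a standard distribution-function summation. The main obstacle is the Fubini/maximal-function step: one must choose the sequence of enlargements $O_k\subseteq O_k^{*}\subseteq O_k^{**}$ so that both $\mu(O_k^{**})\lesssim\mu(O_k)$ and the geometric lower bound on the complement ratio hold simultaneously on the annular tent, using only doubling and the weak-type bound on $M$. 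In $\mathbb{R}^n$ this is folklore, but in a general space of homogeneous type each step has to be verified carefully.
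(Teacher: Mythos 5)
Your proposal is the Coifman--Meyer--Stein/Russ argument, i.e.\ exactly the strategy the paper itself adapts (the paper does not reprove Theorem~\ref{thm:Russ}, citing \cite{Ru06}, but its proof of the weighted Theorem~\ref{thm:tentatom}(i) is this same scheme): level sets $O_k$ of $\mathcal{A}f$, maximal-function enlargements $O_k^*$, a Whitney decomposition of each $O_k^*$, stopping-time pieces cut out of tents over dilated Whitney sets, the key Fubini estimate of Lemma~\ref{tent space} for the size condition, and a distribution-function summation for $\sum|\lambda_{k,j}|^p$. The differences from the paper's version are minor: you use a Coifman--Weiss covering by balls of bounded overlap and then disjointify, where the paper uses the dyadic Whitney decomposition of Lemma~\ref{lem:Whitney} (disjointness for free); and for $2$-atoms you verify condition (ii) directly in $L^2$, whereas the paper, needing general $q$ in the weighted setting, routes the size estimate through the duality Lemma~\ref{lem:tentdual}. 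Your normalization $\lambda_{k,j}=C2^kV(\tilde B_{k,j})^{1/p}$ and the final summation are correct.

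The one genuine gap is the step you yourself flag as ``the main obstacle'' and leave open: the containment of $\widehat{O}_k^*\setminus\widehat{O}_{k+1}^*$ in $G_{k+1}$. With your full-aperture tents $\widehat{O}^*=\{(y,t):B(y,t)\subseteq O^*\}=T_1(O^*)$, a point $(y,t)\notin\widehat{O}_{k+1}^*$ only produces some $x\in B(y,t)$ having global $\gamma$-density with respect to $O_{k+1}^c$; since $x$ may lie arbitrarily close to the boundary of $B(y,t)$, the density information lives on balls $B(x,r)$ that need not occupy a fixed fraction of $B(y,t)$, and the lower bound $\mu(B(y,t)\cap O_{k+1}^c)\gtrsim V(y,t)$ does not follow. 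The double enlargement $O_k\subseteq O_k^*\subseteq O_k^{**}$ you propose transfers the density bound to $B(y,3t)$ rather than $B(y,t)$, which then forces a change of aperture in the Fubini identity and hence an additional aperture-comparison argument for $\mathcal{A}$. The resolution used by Russ and by the paper is instead to shrink the aperture of the tents: decompose over $\widehat{O}:=T_{1-\eta}(O^*)$ with $\eta=1/2$, so that $\big(\widehat{O_{k+1}^*}\big)^c=\mathcal{R}_{1/2}\big((O_{k+1}^c)^*\big)$ by \eqref{eqn:Omegakc}; then any density point $x$ with $d(x,y)<t/2$ gives $B(x,t/2)\subseteq B(y,t)$, hence $\mu(B(y,t)\cap O_{k+1}^c)\geq\gamma\,\mu(B(x,t/2))\gtrsim V(y,t)$ by doubling. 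This is precisely the content of Lemma~\ref{tent space}, whose right-hand side retains aperture $1$, so no aperture change for $\mathcal{A}$ is needed, and the integration is over $O_{k+1}^c$ where $\mathcal{A}f\leq 2^{k+1}$ pointwise. With that substitution (and the dilation constant of the Whitney balls enlarged accordingly, as in \eqref{eqn:C1-1}, so that the $T(\tilde B_{k,j})$ still cover the annular pieces), your argument closes.
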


The theory of Muckenhoupt weights introduced  in \cite{Mu}, is  an important  part of harmonic analysis. The main purpose of this article  is to establish a weighted version of  Theorem \ref{thm:Russ}.
We next review some basic facts on Muckenhoupt weights.

A weight $w$  is a non-negative locally integrable function  on $X$ that takes values in $(0,\infty)$ almost everywhere. We say that $w \in A_p(X)$, $1<p<\infty$, if there exists a constant $C$ such that
\begin{equation}\label{def:Ap}
    [w]_{A_p}:=\sup_B \Big( \frac{1}{\mu(B)}\int_B w(x) \,d\mu(x)\Big) \Big(\frac{1}{\mu(B)}\int_B {w(x)}^{-1/(p-1)}\,\,d\mu(x)\Big)^{p-1}\leq C,
\end{equation}
where the supremum is taken over all  balls  $B\subseteq X$.  We say that $w\in A_1$ if there is a constant $C$ such that for every ball $B\subseteq X$,
$$
    \frac{1}{\mu(B)}\int_B w(y)\, d\mu(y) \leq C w(x) \ \ \ \ {\rm for  \ \ a.e.} \ x\in B.
$$
A weight $w$ is in the class $A_\infty$ when it belongs to some $A_p, \ p>1$.

Let $r>1$. We say $w\in {\rm RH}_r$ (the reverse H\"{o}lder classes),  if there is a constant $C$ such that for any ball $B\subseteq X$,
$$
    \Big(\frac{1}{\mu(B)}\int_B w^r(x) \,d\mu(x)\Big)^{1/r}\leq C \frac{1}{\mu(B)}\int_B w(x)\,d\mu(x).
$$
It is known that $f\in A_\infty$ implies that there exists some $r>1$ such that $f\in {\rm RH}_r$. For more properties and related topics about $A_p$ weights, we refer to \cite{St,Gra1,Gra2}.

For $w\in A_\infty$ and $0< p<\infty$, the weighted Lebesgue spaces $L^p_w(X)$ are defined by $\big\{f: \|f\|_{L^p_w(X)}<\infty\big\}$, where $\|f\|_{L^p_w(X)}:=\big(\int_{X} |f(x)|^pw(x)\,d\mu(x)\big)^{1/p}$.

\smallskip

In this article, we are concerned about the following  weighted tent spaces.\footnote{There are also some other weighted tent spaces which are different from that of our paper.  See, for example, \cite{So,Am2}}

\begin{definition}
For $p\in (0,\infty)$ and $w\in A_\infty$, the tent space $T_{2,w}^p(X)$ is defined by
$$
    T_{2,w}^p (X)=\big\{F(x,t):\  \ F \ {\rm  is \   a \  measurable \  function \ on} \  X\times (0,+\infty), \   {\mathcal A}(F)\in L^p_w(X)\big\},
$$
equipped with a norm $\|F\|_{T_{2,w}^p(X)}:=\|{\mathcal A}(F)\|_{L^p_w(X)} .$
\end{definition}

As far as our points are concerned, there are two different types of atoms for weighted tent spaces
in the case of $X=\R$.

\begin{definition}[Weighted atoms of type I]\label{def:tentatom type I}
Let  $p\in (0,1] $ and $w\in A_\infty(\R)$. A measurable function $\mathfrak{a}$  defined in $\mathbb{R}^{n+1}_+$  is called a  $2$-atom of $T^p_{2,w}(\mathbb{R}^n)$ if  there exists a ball $B\subseteq \R$, such that

{\rm (i)}  $\ {\rm supp }\, \mathfrak{a} \subseteq T(B)$;

{\rm (ii)}   $\ \displaystyle \left(\iint |\mathfrak{a}(y,t)|^2w(B(y,t)) \,\frac{dydt}{t}\right)^{1/2}\leq w(B)^{1/2-1/p}$.
\end{definition}

The above definition about weighted atom was introduced by  Harboure, Salinas and Viviani  in \cite{HSV}. Under $w\in A_{1+\frac{1}{n}}(\mathbb{R}^n)$, the authors  obtained  the $2$-atom decomposition characterization for $\TentRn$. (Indeed,  a more general tent space $T_\eta (w)$ was studied in \cite{HSV}, where $\eta$ is  a nonnegative increasing and concave function, and $T_\eta(w)=T_{2,w}^1(\mathbb{R}^n)$ when $\eta(t)=t$.)

Let us see another type of weighted atoms introduced by Bui, Cao, Ky, D. Yang and S. Yang  in \cite{BCKYY}.

\begin{definition}[Weighted atoms of type II]\label{def:tentatom type II}
Let $p\in (0,1] $ and $w\in A_\infty(\R)$. A measurable function $\mathfrak{a}$ on $\mathbb{R}^{n+1}_+$ is called an $\infty$-atom of $T_{2,w}^p(\mathbb{R}^n)$ if there exists a ball $B\subseteq \R$, such that

{\rm (i)} \ ${\rm supp} \  \mathfrak{a} \subseteq T(B)$;

{\rm (ii)} \ for {\bf any} $q\in (1,\infty)$, $\|\mathfrak{a}\|_{T_{2}^q(\mathbb{R}^n)}\leq |B|^{1/q}w(B)^{-1/p}$.
\end{definition}

Under the  weakest  condition on Muckenhoupt weights, $w\in A_\infty(\mathbb{R}^n)$,  Bui {\it et al} (\cite{BCKYY}) showed   an  $\infty$-atomic decomposition  for $T_{2,w}^p(\mathbb{R}^n)$. That is, for all $f\in T_{2,w}^p(\mathbb{R}^n)$, there exist  $\{\lambda_n\}_{n\in \mathbb{N}}\in l^p$ and a sequence of $\infty$-atoms (type II) of  $T_{2,w}^p(\mathbb{R}^n)$,  $\{a_n\}_{n\in \mathbb{N}}$,  such that
$$
    f=\sum_{n=0}^\infty \lambda_na_n \ \quad \ {\rm and} \quad \ \ \|\{\lambda_n\}\|_{l^p}\leq C_p \|f\|_{T_{2,w}^p(\mathbb{R}^n)}.
$$

We note that if $w\in A_\infty$,  it follows  that  any $\infty$-atom (type II) of  $T_{2,w}^p(\mathbb{R}^n)$, $\mathfrak{a}$,  belongs to $T_{2,w}^p(\mathbb{R}^n)$. In fact, by using  H\"older's inequality, one writes

\begin{align*}
\|\mathfrak{a}\|^p_{T_{2,w}^p(\mathbb{R}^n)}&\leq \left[\int_B {\mathcal{A}(\mathfrak{a})(x)}^q \,dx\right]^{p/q} \left[\int_B {w(x)}^{\frac{q}{q-p}} \, dx\right]^{1-\frac{p}{q}} \\
&\leq |B|w(B)^{-1} \left[\frac{1}{|B|}\int_B {w(x)}^{\frac{q}{q-p}}\, dx\right]^{1-\frac{p}{q}}.
\end{align*}
Since $w\in A_\infty$, there exists some $r>1$, such that $w\in {\rm RH}_r$. We can choose $q$ large enough such that $\frac{q}{q-p}\leq r$, then $w\in {\rm RH}_{\frac{q}{q-p}}$, which implies $\|\mathfrak{a}\|^p_{T_{2,w}^p(\mathbb{R}^n)}\leq C$.

It is worth noticing that if we replace (ii) in Definition \ref{def:tentatom type II} by

\quad { $(ii)'$}  \   for {\bf some} $q\in (1,\infty)$, $\|\mathfrak{a}\|_{T_{2}^q(\mathbb{R}^n)}\leq |B|^{1/q}w(B)^{-1/p}$.

\noindent Here, we might as well call it   ``$q$-atom" (type II) of $T_{2,w}^p(\mathbb{R}^n)$.  It is not difficult to see that  $w\in A_\infty$  can not guarantee ``$q$-atom"  (type II) of  $T_{2,w}^p(\mathbb{R}^n)$ belong to $T_{2,w}^p(\mathbb{R}^n)$.

So we may ask one natural question.

\smallskip

\noindent {\bf Question:}  Is it possible to define a suitable $q$-atom of $T_{2,w}^p(X)$ such that $T_{2,w}^p(X)$ admits a $q$-atomic decomposition under $w\in A_\infty(X)$?

\smallskip

In this paper, we give an affirmative answer to the above question.  Let us state our definition of $q$-atom of $T_{2,w}^p(X)$ and the theorem of $q$-atomic decomposition for $T_{2,w}^p(X)$.

\begin{definition}\label{def:tentatom}
Let $p\in (0,1]$, $q\in (1,\infty),$  and $w\in A_\infty(X)$. A measurable function $\mathfrak{a}(x,t)$ on $X\times (0,+\infty)$ is called a $q$-atom of $T_{2,w}^p(X)$, if there exists a ball $B\subseteq X$, such that
	\begin{itemize}
		\item[(i) ] ${\rm supp }\, \mathfrak{a}\subseteq T(B)$;
		
		\medskip
		
		\item[(ii) ]  $\|\mathfrak{a}\|_{T^q_{2,w}(X)}\leq w(B)^{1/q -1/p}.$
	\end{itemize}
\end{definition}

\begin{theorem} \label{thm:tentatom}
Let $p\in (0,1]$, $q\in (1,\infty), $ and $w\in A_\infty(X)$.
\begin{itemize}
\item  [(i) ] Every $F\in T_{2,w}^p(X)$ can be written as $F=\sum_{j=0}^\infty \lambda_j \mathfrak{a}_j$ almost  everywhere, where $\mathfrak{a}_j$ are $q$-atoms of $T_{2,w}^p(X)$, $\lambda_j\in \mathbb{C}$, and
$$
    \Big( \sum_j |\lambda_j|^p\Big)^{1/p}\leq C \|F\|_{T_{2,w}^p(X)},
$$
where $C$ is a positive constant independent of $F$.

\smallskip

\item [(ii) ] Conversely, if $\{\lambda_j\}_{j=0}^\infty \in \ell^p$ and $\mathfrak{a}_j$  are $q$-atoms of $T_{2,w}^p(X)$, then $F=\sum_{j=0}^\infty \lambda_j \mathfrak{a}_j\in T_{2,w}^p(X)$, and
$$
     \|F\|_{T_{2,w}^p(X)}\leq \Big(\sum_{j}|\lambda_j|^p\Big)^{1/p }.
$$
\end{itemize}
\end{theorem}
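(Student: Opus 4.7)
Part (ii) is a direct H\"older argument. If $\mathfrak{a}$ is a $q$-atom supported in $T(B)$, then $B(y,t)\subseteq B$ for every $(y,t)\in T(B)$, which combined with $(y,t)\in\Gamma(x)\iff x\in B(y,t)$ shows that $\mathcal{A}(\mathfrak{a})$ is supported in $B$. Applying H\"older's inequality with exponents $q/p$ and $q/(q-p)$ to $\int_B\mathcal{A}(\mathfrak{a})^p\,w\,d\mu$ yields $\|\mathfrak{a}\|_{T_{2,w}^p(X)}\leq \|\mathfrak{a}\|_{T_{2,w}^q(X)}\,w(B)^{1/p-1/q}\leq 1$. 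Since $p\leq 1$, the $p$-th power of the quasi-norm is subadditive, so $\|\sum_j\lambda_j\mathfrak{a}_j\|_{T_{2,w}^p(X)}^p\leq \sum_j|\lambda_j|^p$, giving (ii).

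For part (i) I follow the Coifman--Meyer--Stein/Russ Whitney construction, adapted to the weighted setting. Set $O_k:=\{x:\mathcal{A}(F)(x)>2^k\}$ and $O_k^*:=\{x:M\chi_{O_k}(x)>\gamma\}$ for a small $\gamma\in(0,1)$, with $F_k^*:=X\setminus O_k^*$; the hypothesis $w\in A_\infty$ yields $w(O_k^*)\lesssim w(O_k)$. A Whitney-type decomposition covers $O_k^*$ by balls $\{B_{k,j}\}_j$ of bounded overlap with radii comparable to $d(B_{k,j},F_k^*)$. Writing $F=\sum_k F\chi_{T(O_k^*)\setminus T(O_{k+1}^*)}$ and further partitioning each shell through the Whitney balls produces disjoint measurable sets $A_{k,j}\subseteq (T(O_k^*)\setminus T(O_{k+1}^*))\cap T(\kappa B_{k,j})$ for a fixed dilation $\kappa\geq 1$. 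With $\lambda_{k,j}:=C\,2^k w(B_{k,j})^{1/p}$ and $\mathfrak{a}_{k,j}:=\lambda_{k,j}^{-1}F\chi_{A_{k,j}}$, bounded overlap and a dyadic layer-cake computation yield
\[
\sum_{k,j}\lambda_{k,j}^p \;\lesssim\; \sum_k 2^{kp}w(O_k^*) \;\lesssim\; \sum_k 2^{kp}w(O_k) \;\lesssim\; \|F\|_{T_{2,w}^p(X)}^p.
\]

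The technical heart of the argument is the $q$-atom size estimate $\|F\chi_{A_{k,j}}\|_{T_{2,w}^q(X)}\lesssim 2^k w(B_{k,j})^{1/q}$. Since $\mathcal{A}(F\chi_{A_{k,j}})$ is supported in $\kappa B_{k,j}$, I split the integration into $F_{k+1}^*\cap\kappa B_{k,j}$ and $O_{k+1}^*\cap\kappa B_{k,j}$. On the first piece, the inclusion $F_{k+1}^*\subseteq\{\mathcal{A}(F)\leq 2^{k+1}\}$ gives the pointwise bound $\mathcal{A}(F\chi_{A_{k,j}})\leq 2^{k+1}$, and $w(\kappa B_{k,j})\lesssim w(B_{k,j})$ (by the doubling of $w$) closes this contribution. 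On the second piece, the key geometric observation is that every $(y,t)\in A_{k,j}$ satisfies $B(y,t)\cap F_{k+1}^*\neq\emptyset$ (since $(y,t)\notin T(O_{k+1}^*)$), which forces $t\geq d(x,F_{k+1}^*)/2$ for $x\in B(y,t)$ and allows comparison of $\mathcal{A}(F\chi_{A_{k,j}})(x)$ with a wider-aperture area function at a nearby $F_{k+1}^*$-point. I expect the main obstacle to be closing this estimate uniformly in $q\in(1,\infty)$: for $q=2$ a direct Fubini swap, combined with the $A_\infty$ measure comparison $w(B(y,t))\lesssim w(B(y,t)\cap F_{k+1}^*\cap \kappa B_{k,j})$ (valid because $B(y,t)\subseteq \kappa B_{k,j}$ and $B(y,t)$ straddles the boundary of $O_{k+1}^*$), reduces the $O_{k+1}^*$-contribution to $\int_{F_{k+1}^*\cap\kappa B_{k,j}}\mathcal{A}(F)^2 w\lesssim 2^{2k}w(B_{k,j})$; for general $q$ the bilinear unfolding is unavailable, and one must instead use the truncated-cone geometry to obtain a pointwise comparison with a wider-aperture area function, together with a weighted change-of-aperture estimate for the area function that holds under $w\in A_\infty$ (typically proved via boundedness of a suitable Hardy--Littlewood maximal operator on $L^s_w$ for an appropriate $s<q$).
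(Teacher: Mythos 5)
Your part (ii) is correct and coincides with the paper's argument. The skeleton of your part (i) --- the level sets $O_k$, the enlargements $O_k^*$ via the maximal function, the Whitney decomposition, the coefficients $\lambda_{k,j}=C2^kw(B_{k,j})^{1/p}$, and the $\ell^p$ bound through $w(O_k^*)\lesssim w(O_k)$ and a layer-cake computation --- also matches the paper. The genuine gap is exactly where you flag it: the size estimate $\|F\chi_{A_{k,j}}\|_{T^q_{2,w}(X)}\lesssim 2^k w(B_{k,j})^{1/q}$ for general $q\in(1,\infty)$. Your $q=2$ bilinear unfolding is sound, but for $q\neq 2$ you only gesture at ``pointwise comparison with a wider-aperture area function at a nearby $F_{k+1}^*$-point plus a weighted change-of-aperture estimate,'' and as sketched this does not close: the nearby point $z\in F_{k+1}^*$ depends on $(y,t)$, so at best you get a bound by a maximal function of $\mathcal{A}_3(F\chi_{A_{k,j}})^2\chi_{F_{k+1}}$, whose $L^{q/2}_w$ treatment needs $w\in A_{q/2}$ and then still requires controlling the \emph{wider-aperture} functional $\mathcal{A}_3(F\chi_{A_{k,j}})$ on $F_{k+1}$, where only $\mathcal{A}_1(F)\leq 2^{k+1}$ is known; a further change of aperture returns you to the very quantity being estimated.

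The paper escapes this circularity by duality. It first proves (Lemma 3.1) that the dual of $T^q_{2,w}(X)$ is $T^{q'}_{2,w^{-1/(q-1)}}(X)$ for $w\in A_q$ --- this is the main technical investment of Section 3, relying on the sharp weighted bound for the Hardy--Littlewood maximal operator. Then, noting that a $q_1$-atom is automatically a $q_2$-atom for $q_1>q_2$ so that one may take $q$ large enough that $w\in A_q$, it writes $\|\mathfrak{a}_j^k\|_{T^q_{2,w}(X)}=\sup_b|\langle \mathfrak{a}_j^k,b\rangle|$ over $\|b\|_{T^{q'}_{2,w^{-1/(q-1)}}}\leq 1$, applies the global-density estimate of Lemma 2.4 to transfer the bilinear integral over $\Delta_j^k\subseteq(\widehat{\Omega^*_{k+1}})^c=\mathcal{R}_{1/2}(({}^c\Omega_{k+1})^*)$ to an integral over $\Omega_{k+1}^c\cap B_j^k$, uses Cauchy--Schwarz on cones together with the pointwise bound $\mathcal{A}(F)\lesssim 2^k$ off $\Omega_{k+1}$, and finishes with H\"older in $x$ against $w^{1/q}\cdot w^{-1/q}$. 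Note that the weight plays no role in Lemma 2.4 itself, which is why this route handles all $q$ uniformly. To complete your proof you should either reproduce this duality step or actually prove the weighted change-of-aperture input you invoke; as written, the key estimate for general $q$ is asserted rather than established.
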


\begin{remark}
To the best of our knowledge, the $q$-atom of $T_{2,w}^p(X)$  in Definition \ref{def:tentatom} is new even in $X=\R$. We also note that   Definition \ref{def:tentatom type I}  is indeed  our  $q$-atom of $T_{2,w}^1(X)$ for the case of  $q=2$ and  $X=\R$.  So  Definition \ref{def:tentatom} and Theorem \ref{thm:tentatom} can also be viewed as the extension of the corresponding result of \cite{HSV} from $q=2, \ p=1$ and $X=\R$ to $q\neq 2$, $p<1$ and general space of homogeneous type $X$, under $w\in A_\infty(X)$.
\end{remark}

To prove Theorem \ref{thm:tentatom}, we first show a useful dual result of weighted tent spaces (Lemma \ref{lem:tentdual}), that is,  the dual of $T_{2,w}^q(X)$ is $T_{2,w^{-1/(q-1)}}^{q'}(X)$, with $1/q+1/{q'}=1, \  (1<q<\infty)$. With Lemma \ref{lem:tentdual} at our disposal,  we then apply the dyadic structure of space of homogeneous type $X$ and  properties of $A_\infty$ weight,  to establish the $q$-atomic decomposition for $T_{2,w}^p(X)$,  based on the  argument in \cite{CMS,Ru06}.

It is well known that there are close relations between tent spaces and many Hardy spaces (including classical Hardy spaces  and Hardy spaces associated to operators). As an application of Theorem \ref{thm:tentatom}, we will  show an atomic decomposition for  weighted Hardy spaces associated to nonnegative self-adjoint operators  on  spaces of homogeneous type, which generalizes the result \cite{LS} from $\R$ to general space of homogeneous type $X$, via an alternative approach.

Let us review some history about Hardy spaces and weighted Hardy Spaces. The development of the theory of Hardy spaces in ${\mathbb R}^n$ was initiated by Stein and Weiss \cite{SW}. Real variable methods were introduced into this subject
in the seminal article of Fefferman and Stein \cite{FS}.   The atomic decomposition characterization of Hardy spaces was first obtained by Coifman \cite{Co} when $n=1$, and in higher dimensions by Latter \cite{L}. In \cite{G},  Garcia-Cuerva extended  the theory of Hardy space to the weighted case, including atomic decomposition, maximal function characterization and duality (see also \cite{ST}). For more properties about Hardy spaces, we refer to \cite{St,Gra2}.

However, there are some important situations in which the theory of classical Hardy spaces is not applicable. Recently, many researchers have studied Hardy spaces that are  adapted to a linear operator $L$, in much the same way that the classical Hardy spaces are adapted to the Laplacian.  First Auscher, Duong and M$^c$Intosh \cite{ADM}, and then Duong and Yan \cite{DY2}, introduced Hardy   space  adapted to an operator $L$ whose heat kernel enjoys  a pointwise Gaussian upper bound (see also \cite{AR}). In  \cite{AMR} and in \cite{HM}, the authors treated Hardy spaces  adapted, respectively, to the Hodge Laplacian on a Riemannian manifold with doubling measure, or to a second order divergence form elliptic operator on $\mathbb{R}^n$ with complex coefficients, in which settings pointwise heat kernel bounds may fail.  After that, in \cite{HLMMY}, Hofmann {\it et al} considered a non-negative self-adjoint operator $L$ satisfying Davies-Gaffney bounds on $L^2$. They developed a theory of Hardy (and BMO) spaces associated to $L$, including an atomic decomposition and square function characterization.

More recently,  the first-named author  of this paper  and  Yan  presented  a theory of the weighted  Hardy spaces associated to operators \cite{SY}. Roughly speaking, for the non-negative self-adjoint operators $L$ whose heat kernel enjoys  a pointwise Gaussian upper bound, and for $w\in A_1\cap {\rm RH}_2$,  they introduced a weighted Hardy spaces $H^1_{L,w}(\R)$  associated to $L$ in terms of the area function characterization,  and  proved their atomic decomposition characterization. Later, Bui and Duong \cite{AD} extended the result to $H^p_{L,w}$ spaces case (in more general setting) for $0<p\leq 1$. In \cite{LS}, Liu and the first-named author of this paper   obtained  another  atomic decomposition
(the definition of atom is different from  that of \cite{SY}) for $H^1_{L,w}({\mathbb R}^n)$, under the weaker condition on weight, $w\in A_p$ for $1\leq p<\infty$. At the almost same time, for that kind of  atom in \cite{SY}, Bui {\it et al} in \cite{BCKYY} also weakened  the weight conditions in \cite{SY}, and obtain the atomic decomposition.

The following theorem is the second main result of this paper,  which proves the atomic decomposition characterization  for $H_{L,w}^p(X)$, and thus generalizes the result in \cite{LS} from $\R$ to  spaces of homogeneous type $X$.

\begin{theorem}\label{thm:Hardyatom}
Let $w\in A_s, 1<s<\infty$. For $0<p\leq 1$,  $q\geq s$ and $M>\frac{n(s-p)}{2p}$, there holds
$$
    H_{L,w}^p(X)={H}^{p,q,M}_{L,w}(X).
$$
\end{theorem}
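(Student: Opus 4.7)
My plan is to prove the two inclusions $H^{p,q,M}_{L,w}(X)\subseteq H^p_{L,w}(X)$ and $H^p_{L,w}(X)\subseteq H^{p,q,M}_{L,w}(X)$ separately, using the $L^2$ Calder\'on reproducing formula
\[
    f = c_M \int_0^\infty (t^2 L)^{M+1} e^{-t^2 L}\bigl[t^2 L e^{-t^2 L} f\bigr]\,\frac{dt}{t}
\]
as the bridge. By the area-function definition of $H^p_{L,w}(X)$, the function $F(y,t):=t^2 L e^{-t^2 L} f(y)$ lies in $T^p_{2,w}(X)$ with $\|F\|_{T^p_{2,w}(X)}\sim\|f\|_{H^p_{L,w}(X)}$, which puts Theorem \ref{thm:tentatom} into the central role.

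For the inclusion $H^{p,q,M}_{L,w}(X)\subseteq H^p_{L,w}(X)$, by a standard subadditivity argument for $L^p_w$-quasinorms with $0<p\le 1$ it suffices to show $\|\alpha\|_{H^p_{L,w}(X)}\lesssim 1$ uniformly over $(p,q,M)$-atoms $\alpha = L^M b$ associated to balls $B = B(x_B,r_B)$. I would partition $X=\bigcup_{k\ge 0}U_k$ into dyadic annuli ($U_0=4B$, $U_k=2^{k+1}B\setminus 2^kB$ for $k\ge 1$) and, on each piece, use H\"older's inequality to convert the $L^p_w$-estimate of the square function into a bound of the form $\|S_L \alpha\|_{L^q_w(U_k)}^{p}\,w(U_k)^{1-p/q}$. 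On $U_0$ this follows from $L^q_w$-boundedness of $S_L$, available since $w\in A_s\subset A_q$ when $q\ge s$. On the far annuli, Davies--Gaffney off-diagonal estimates applied to $\{(t^2 L)^{M+1}e^{-t^2L}\}_{t>0}$ acting on $b$ yield a decay factor of $2^{-2k(M+1)}$ after integrating the cone variable; combined with the bound $w(U_k)\lesssim 2^{kns}w(B)$ coming from $w\in A_s$, the hypothesis $M>\frac{n(s-p)}{2p}$ is exactly what forces the resulting geometric series in $k$ to converge.

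For the reverse inclusion $H^p_{L,w}(X)\subseteq H^{p,q,M}_{L,w}(X)$, apply Theorem \ref{thm:tentatom} to decompose $F=\sum_j \lambda_j \mathfrak{a}_j$ with $\mathfrak{a}_j$ a $q$-atom of $T^p_{2,w}(X)$ supported in a tent $T(B_j)$ and $\bigl(\sum_j|\lambda_j|^p\bigr)^{1/p}\lesssim \|f\|_{H^p_{L,w}(X)}$. Substituting termwise into the reproducing formula produces the candidate representation $f=\sum_j\lambda_j \alpha_j$ with
\[
    \alpha_j := c_M\int_0^\infty (t^2L)^{M+1}e^{-t^2L}\mathfrak{a}_j(\cdot,t)\,\frac{dt}{t},
    \qquad
    b_j := c_M\int_0^\infty t^{2M}(t^2L)e^{-t^2L}\mathfrak{a}_j(\cdot,t)\,\frac{dt}{t},
\]
so that $\alpha_j=L^M b_j$. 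The support condition for a $(p,q,M)$-atom is arranged (up to replacing $B_j$ by a fixed dilate) by controlling the kernel propagation across $T(B_j)$ via Davies--Gaffney bounds on $(t^2L)^{k}e^{-t^2L}$. The principal obstacle is then the quantitative atom-size bound $\|(r_{B_j}^2L)^k b_j\|_{L^q_w(X)}\lesssim r_{B_j}^{2M}w(B_j)^{1/q-1/p}$ for $k=0,\ldots,M$, which I reduce to the weighted tent-to-Lebesgue inequality
\[
    \Bigl\|\int_0^\infty (t^2L)^{M+1}e^{-t^2L}G(\cdot,t)\,\frac{dt}{t}\Bigr\|_{L^q_w(X)} \lesssim \|G\|_{T^q_{2,w}(X)}.
\]
I expect to prove this by duality: pairing with $g\in L^{q'}_{w^{-1/(q-1)}}(X)$ and using Lemma \ref{lem:tentdual} to relocate the computation to weighted tent-space pairing, then closing via the $L^{q'}_{w^{-1/(q-1)}}$-boundedness of the vertical square function of $L$. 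The condition $q\ge s$ is what guarantees that both $w$ and $w^{-1/(q-1)}$ lie in Muckenhoupt classes compatible with these square-function bounds, so it is the conjunction of $q\ge s$ (for the decomposition step) and $M>\frac{n(s-p)}{2p}$ (for the tail summation in the reverse step) that drives the whole argument.
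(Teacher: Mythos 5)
Your overall architecture (tent-space atomic decomposition via Theorem \ref{thm:tentatom}, a Calder\'on reproducing formula as the bridge, and a uniform bound on $S_L$ applied to atoms for the converse inclusion) is the same as the paper's, and your forward inclusion $H^{p,q,M}_{L,w}(X)\subseteq H^p_{L,w}(X)$ is essentially the paper's proof of part (ii) of its Theorem \ref{th1.2} together with Lemma \ref{le4.1}: near part by $L^q_w$-boundedness of $S_L$, far part by annuli with Gaussian decay, and the exponent bookkeeping $M>\frac{n(s-p)}{2p}$, $w(2^kB)\lesssim 2^{kns}w(B)$ closing the geometric series. (One caveat there: the a.e.\ inequality $|S_Lf|\le\sum_j|\lambda_j|\,S_La_j$ is not automatic for an $L^2$-convergent sum; the paper derives it from the weak $(2,2)$ bound in Lemma \ref{le4.1}, and your ``standard subadditivity argument'' should be understood as including that step.)

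The genuine gap is in the reverse inclusion. You build $b_j=c_M\int_0^\infty t^{2M}(t^2L)e^{-t^2L}\mathfrak{a}_j(\cdot,t)\,\frac{dt}{t}$ from the heat semigroup and claim the support condition of a $(p,q,M,w)$-atom can be ``arranged up to a fixed dilate of $B_j$ via Davies--Gaffney bounds.'' It cannot: Definition \ref{da} requires $\operatorname{supp}L^kb\subseteq B$ \emph{exactly} for $k=0,\dots,M$, whereas the heat kernel is nowhere compactly supported, and Davies--Gaffney/Gaussian estimates give only exponential off-diagonal decay. With your choice, $\alpha_j=L^Mb_j$ is a molecule, not an atom, and Theorem \ref{thm:Hardyatom} as stated (equality with the atomic space ${H}^{p,q,M}_{L,w}(X)$) does not follow without an additional molecular-to-atomic reduction that your proposal does not supply. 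The paper circumvents this by replacing $(t^2L)^{M+1}e^{-t^2L}$ with $\Psi(t\sqrt L)=(t\sqrt L)^{2\alpha}\Phi^3(t\sqrt L)$, where $\Phi$ is the Fourier transform of a compactly supported even $\varphi$; by finite propagation speed for $\cos(t\sqrt L)$ (Lemma \ref{le2.5}) the kernel of $(t^2L)^k\Phi^3(t\sqrt L)$ is supported in $\{d(x,y)\le t\}$, so that for $(y,t)\in T(B_j)$ one has $t\le r_{B_j}$ and hence $\operatorname{supp}L^ib_j\subseteq 3B_j$ exactly. Your duality scheme for the size bound (pairing with $L^{q'}_{w^{-1/(q-1)}}$, Lemma \ref{lem:tentdual}, and a weighted square-function estimate) is the right one and matches Proposition \ref{prop:TentHardy}, but it must be run with $\Psi(t\sqrt L)$ rather than the heat semigroup for the support condition to survive.
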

\noindent Please see Section 4 for the precise definition of ${H}^{p,q,M}_{L,w}(X)$.

The layout of the article is as follows. In Section 2, we   introduce some  basic properties of spaces of homogeneous type, $A_p$ weights and weighted tent spaces.  In Section 3,  we first show  a dual result for $T_{2,w}^q(X)$, $1<q<\infty$, under the assumption $w\in A_q$  (Lemma \ref{lem:tentdual}). Then based on the  argument in \cite{CMS,Ru06}, we show the first main result of this paper,  Theorem \ref{thm:tentatom}. In Section 4,  we recall  the weighted Hardy spaces $H_{L,w}^p(X)$ associated to a non-negative self-adjoint operator $L$,  and study the  relationship between weighted tent spaces $T_{2,w}^p(X)$ and weighted Hardy spaces $H_{L,w}^p(X)$  (Proposition \ref{prop:TentHardy}). The second main result, Theorem \ref{thm:Hardyatom}, is then obtained  by applying Theorem \ref{thm:tentatom} and Proposition \ref{prop:TentHardy}.

Throughout this article,  we shall denote $w(E):=\int_E w(x)\,d\mu(x)$ for any set $E\subseteq X$.    The letter ``$C$" or ``$c$"  will denote (possibly different) constants that are independent of the essential variables. For $1<q\leq\infty$, we will denote $q'$ the adjoint number of $q$, i.e. ${1}/{q}+ {1}/{q'}=1$.  When $B$ is a ball and $a$ is a positive number, we shall use $aB$ to denote  the $a$-fold dilate of $B$ concentric with $B$.  We also denote $d(E,F)$ the distance of two sets $E$ and $F$ in $X$. The complement of a set $E$ in $X$ will be denoted by $E^c$ or ${^c}E$.

\section{Notation and preliminaries}
\setcounter{equation}{0}

\subsection{Space of homogeneous type and system of dyadic cubes.}

Throughout this article, unless we mention the contrary, $(X,d,\mu)$ is a metric measure space endowed with a distance $d$ and a nonnegative Borel doubling measure $\mu$ on $X$. We also assume $\mu(X)=\infty$ and $\mu(\{x\})=0$ for all $x\in X$.  Note that the doubling property \eqref{eqn:doubling} implies the following strong homogeneity property,
\begin{equation}\label{eqn:doubling2}
	  V(x,\lambda r) \leq C \lambda ^n V(x,r)
\end{equation}
for some $C,\,n>0$ uniformly for all $\lambda \geq 1$ and $x\in X$. The smallest value of the parameter $n$ is a measure of the dimension of the space. There also exist $C$ and $D$ so that
\begin{equation}\label{eqn:doubling3}
	   V(y,r)\leq C \left(1+\frac{d(x,y)}{r}\right)^D  V(x,r)
\end{equation}
uniformly for all $x,\,y\in X$ and $r>0$. Indeed, property \eqref{eqn:doubling3} with $D=n$ is a direct consequence of the triangle inequality for the metric $d$ and the strong homogeneity property \eqref{eqn:doubling2}. In many case such as, for instance, the Euclidean space $\mathbb{R}^n$ or Lie groups of polynomial growth, $D$ can be chosen to be 0.

\smallskip

Next let us introduce some preliminaries on dyadic cubes in $(X,d,\mu)$, which will be used in the dyadic version of Whitney covering theorem. We recall the following construction due to T. Hyt\"onen and A. Kairema~\cite{HK}, which is
a slight elaboration of seminal work by M. Christ \cite{Ch}.

\begin{theorem}\label{thm:dyadic}
Suppose that constants $0<c_0\leq C_0<\infty$ and $\delta\in (0,1)$ satisfy
\begin{equation}\label{eqn:size}
	12C_0\delta\leq c_0.
\end{equation}
Given a set of point $\{x_\beta^k\}_\beta$, $\beta\in \mathscr{J}_k$, for every $k\in \mathbb{Z}$, with the properties that
$$
    d(x_{\beta_1}^k, x_{\beta_2}^k) \  (\beta_1\neq \beta_2)\geq c_o \delta^k,\   \   \     \  \min_\beta d(x,x_\beta^k)<C_0\delta^k    \   \   \   \text{for all  }\ x\in X,
$$	
we can construct a countable family
$$
    \mathscr{D}=\bigcup_{k\in \mathbb{Z}} \mathscr{D}_k,\qquad  \mathscr{D}_k=
    \{Q_\beta^k:\  \beta\in \mathscr{J}_k\},
$$
of Borel sets $Q_\beta^k\subseteq X$, with the properties that
\begin{enumerate}[\rm(i)]
\item $X=\bigcup_{\beta\in \mathscr{J}_k} Q_\beta^k$ (disjoint union) for all $k\in \mathbb{Z}$;

         \smallskip

\item  if $\ell\geq k$, then either $Q_{\beta_2}^\ell\subseteq	 Q_{\beta_1}^k$ or  $Q_{\beta_1}^k\cap Q_{\beta_2}^\ell=\emptyset$;

         \smallskip

\item  for each $(k,\beta_1)$ and each $\ell\leq k$, there exists a unique $\beta_2$ such that $Q_{\beta_1}^k\subseteq Q_{\beta_2}^\ell$;

        \smallskip

\item  there exists a positive constant $M=M(n,\delta)$ such that for each $(k,\beta_1)$,
         \begin{equation}\label{e2.3}
	            1\leq \# \left\{\beta_2:\  Q_{\beta_2}^{k+1}\subseteq Q_{\beta_1}^k\right\}\leq M  \quad  \text{and }\quad 	 Q_{\beta_1}^k= \underset{ \beta_2:\  Q_{\beta_2}^{k+1}\subseteq
               Q_{\beta_1}^k }    {\bigcup} Q_{\beta_2}^{k+1} \ ;
        \end{equation}

        \smallskip

\item  for each $(k,\beta)$,
         \begin{equation}\label{eqn:dycube1}
                B(x_\beta^k, c_0\delta^k/3)\subseteq Q_\beta^k\subseteq B(x_\beta^k, 2C_0\delta^k)=:B(Q_\beta^k),
         \end{equation}
         for some $x_\beta^k\in Q_\beta^k$;

         \smallskip

\item  if $\ell\geq k$ and $Q_{\beta_2}^\ell\subseteq Q_{\beta_1}^k$, then $B(Q_{\beta_2}^\ell)\subseteq B(Q_{\beta_1}^k)$.
\end{enumerate}
\end{theorem}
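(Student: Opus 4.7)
The plan is to construct the family $\mathscr{D}$ from the given multiscale net $\{x_\beta^k\}_{\beta \in \mathscr{J}_k}$ by first imposing a tree structure on the centers and then building cubes so that the tree inclusion is inherited as set inclusion. This is the Christ construction as refined by Hyt\"onen--Kairema; the smallness hypothesis $12 C_0 \delta \leq c_0$ is what makes the quantitative estimates close up.

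First, I would select a \emph{parent map} $p_k : \mathscr{J}_k \to \mathscr{J}_{k-1}$ by letting $p_k(\beta)$ achieve the minimum $\min_\alpha d(x_\beta^k, x_\alpha^{k-1})$, with ties broken by a fixed well-ordering. The covering hypothesis forces $d(x_\beta^k, x_{p_k(\beta)}^{k-1}) < C_0 \delta^{k-1}$, and iteration yields ancestor maps $p_{k,\ell} : \mathscr{J}_\ell \to \mathscr{J}_k$ for every $\ell \geq k$. Next, at each scale $k$ I would introduce a preliminary Voronoi partition $\{\tilde{Q}_\beta^k\}_\beta$ by assigning each $x \in X$ to a nearest center $x_\beta^k$, ties broken via the tree; the separation and covering properties give $B(x_\beta^k, c_0\delta^k/2) \subseteq \tilde{Q}_\beta^k \subseteq B(x_\beta^k, C_0\delta^k)$. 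The dyadic cubes are then obtained by a limiting/stabilization procedure in which $y \in Q_\beta^k$ iff the sequence $p_{k,\ell}(\gamma_\ell(y))$ is eventually $\beta$, where $\gamma_\ell(y)$ is the index of the Voronoi cell containing $y$ at scale $\ell$. With this definition, properties (i)--(iii) and (vi) are formal consequences of the tree structure, and (iv) follows from (v) combined with the strong homogeneity \eqref{eqn:doubling2}: the children of $Q_\beta^k$ have centers lying in $B(x_\beta^k, 2C_0 \delta^k)$ and are $c_0\delta^{k+1}$-separated, so \eqref{eqn:doubling2} bounds their number by $M = M(n,\delta)$.

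The analytic heart of the argument is property (v), and this is where I expect the main obstacle. For the outer inclusion $Q_\beta^k \subseteq B(x_\beta^k, 2C_0\delta^k)$ a telescoping estimate
$$
  d(y, x_\beta^k) \;\leq\; C_0 \delta^\ell + \sum_{j=0}^{\ell-k-1} C_0 \delta^{k+j} \;\leq\; \frac{C_0 \delta^k}{1-\delta}
$$
holds whenever $y \in \tilde{Q}_\gamma^\ell$ with $p_{k,\ell}(\gamma) = \beta$, and \eqref{eqn:size} forces $\delta \leq 1/12$ so the right side is at most $2C_0\delta^k$. For the inner inclusion $B(x_\beta^k, c_0\delta^k/3) \subseteq Q_\beta^k$ one must show that whenever $y$ is that close to $x_\beta^k$, the ancestor $p_{k,\ell}(\gamma_\ell(y))$ equals $\beta$ at every $\ell \geq k$; otherwise the iterated drift $\sum_{j=0}^{\ell-k-1} C_0 \delta^{k+j}$ plus $c_0\delta^k/3 + C_0\delta^\ell$ would have to exceed the scale-$k$ separation $c_0\delta^k$, and plugging in $\delta \leq c_0/(12 C_0)$ contradicts this. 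In other words, \eqref{eqn:size} is essentially the sharp quantitative threshold at which the ancestor-drift argument closes, and the only delicate point in the whole construction is this competition between drift and separation.
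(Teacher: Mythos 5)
First, a remark on provenance: the paper does not prove this theorem at all; it is quoted verbatim from Hyt\"onen and Kairema \cite{HK} (itself a refinement of Christ \cite{Ch}), so your proposal has to be measured against that construction rather than against anything in the present paper. Your overall architecture (parent map, Voronoi cells, tree-induced inclusions, doubling plus separation for (iv)) is the right one, and your telescoping bound for the outer inclusion in (v) is correct: $C_0\delta^\ell+\sum_{j=0}^{\ell-k-1}C_0\delta^{k+j}\leq C_0\delta^k/(1-\delta)\leq 2C_0\delta^k$ since $\delta\leq 1/12$.

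However, your argument for the inner inclusion --- which you correctly identify as the analytic heart --- does not close. You claim a contradiction from
$$
 c_0\delta^k \;\leq\; \frac{c_0\delta^k}{3}+C_0\delta^\ell+\sum_{j=0}^{\ell-k-1}C_0\delta^{k+j},
$$
but the $j=0$ term of the drift alone is $C_0\delta^k\geq c_0\delta^k$ (recall $c_0\leq C_0$), so the right-hand side always exceeds the left and no contradiction arises. The hypothesis $12C_0\delta\leq c_0$ controls only the drift accumulated from level $k+1$ downward, namely $\sum_{j\geq 1}C_0\delta^{k+j}\leq 2C_0\delta^{k+1}\leq \frac{1}{6}c_0\delta^k$, not the single coarsest step from level $k+1$ to level $k$, which can be as large as $C_0\delta^k$. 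The standard repair is to build a \emph{consistency clause} into the parent selection --- if $d(x_\gamma^{k+1},x_\beta^k)<\frac{1}{2}c_0\delta^k$ then the parent of $\gamma$ \emph{must} be $\beta$ (well defined by the separation hypothesis; your nearest-center rule does satisfy this) --- and then to run the inner-inclusion induction so that one only ever compares a point's level-$(k+1)$ ancestor to $x_\beta^k$, never jumping directly across the coarsest step. A second gap: your stabilization definition of $Q_\beta^k$ presumes that the ancestor sequence $p_{k,\ell}(\gamma_\ell(y))$ is eventually constant, but the Voronoi indices $\gamma_\ell(y)$ at successive scales need not be related by the parent map (the nearest level-$(\ell+1)$ center to $y$ need not be a child of the nearest level-$\ell$ center), so this sequence can oscillate forever among finitely many admissible values; consequently (i)--(iii) are not formal consequences of your definition, and exact disjoint covering of \emph{every} point of $X$ in (i) is at risk. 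Hyt\"onen--Kairema avoid both issues by defining closed and open cubes directly from the descendant tree of \emph{centers} and then making a Borel selection between them, which is precisely the extra work needed to upgrade Christ's almost-everywhere partition to the genuine partition asserted here.
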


 $\mathscr{D}$ is called  a system of dyadic cubes, and the the set $Q_\beta^k\in \mathscr{D}_k$ is called a dyadic cube of generation $k$ with center point $x_\beta^k$.   Notably, one could construct systems of dyadic cubes in more general settings such as geometrically doubling  quasi-metric spaces, see \cite{HK}.

We introduce the Whitney decomposition in $(X,d,\mu)$. It's well known that  one can establish a dyadic (cube) version of the Whitney covering theorem in Euclidean spaces. Moreover, in a metric space and given any open set $O$ with $O^c\neq\emptyset$, there exists a set of finite overlapping  balls $\mathscr{B}=\{B_\alpha\}_{\alpha\in \Lambda}$ and a positive constant $c_1$ independent of $O$ such that $O=\bigcup_{\alpha\in \Lambda}  B_\alpha$ and $c_1B_\alpha \cap O^c\neq \emptyset$. See \cite[Theorem~1.3, Chapter III]{CW}  for details. Due to Theorem~\ref{thm:dyadic}, i.e., the existence of system of dyadic cubes in $(X,d,\mu)$, we have the following dyadic Whitney decomposition.

\begin{lemma}\label{lem:Whitney}
	Let $\Omega\subseteq X$ be a proper open subset. Then there exists a set of countable dyadic cubes $\{Q_\alpha\}_{\alpha}$ such that
\begin{itemize}
  \item[(i) ]  The cubes $\{Q_\alpha\}$ are mutually disjoint;
  \item[(ii) ] $\Omega=\bigcup_{\alpha}  Q_\alpha$;
  \item[(iii) ]  ${\rm diam }(Q_\alpha)\leq d(Q_\alpha,\Omega^c)\leq \delta^{-2} {\rm diam }(Q_\alpha)$, where $\delta$ is the parameter in Theorem~\ref{thm:dyadic}.
\end{itemize}
\end{lemma}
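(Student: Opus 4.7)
The plan is to construct the desired family by a standard maximal selection procedure on the dyadic system $\mathscr{D}$ from Theorem~\ref{thm:dyadic}. For each $x\in\Omega$, let $Q^{(k)}(x)$ denote the unique dyadic cube of generation $k$ containing $x$ (existence by (i) and uniqueness by (ii) of Theorem~\ref{thm:dyadic}), and let $k^*(x)$ be the smallest integer $k$ for which
$$
 \operatorname{diam}\bigl(Q^{(k)}(x)\bigr)\le d\bigl(Q^{(k)}(x),\Omega^c\bigr).
$$
Since $\Omega$ is open with nonempty complement, $r_x:=d(x,\Omega^c)>0$; the bound $\operatorname{diam}(Q^{(k)}(x))\le 4C_0\delta^k$ from \eqref{eqn:dycube1} together with $d(Q^{(k)}(x),\Omega^c)\ge r_x-\operatorname{diam}(Q^{(k)}(x))$ shows that the condition holds for all sufficiently large $k$, so $k^*(x)$ is well defined. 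Set $Q_x:=Q^{(k^*(x))}(x)$ and let $\{Q_\alpha\}_\alpha$ be the enumeration of the distinct cubes produced this way; countability is automatic since $\mathscr{D}$ itself is countable.

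I would next check properties (i) and (ii) of the lemma. For disjointness, suppose $Q_x\cap Q_y\ne\emptyset$; the nesting property (ii) of Theorem~\ref{thm:dyadic} forces $Q_x\subseteq Q_y$ or vice versa. Say $Q_x\subseteq Q_y$. Then $x\in Q_y$, and $Q_y$ is a dyadic cube containing $x$ that satisfies the Whitney selection condition at some generation $\le k^*(x)$; minimality of $k^*(x)$ forces the generations to coincide, whence $Q_x=Q_y$. For the cover, $x\in Q_x$ gives $\Omega\subseteq\bigcup_\alpha Q_\alpha$, while the condition $\operatorname{diam}(Q_x)\le d(Q_x,\Omega^c)$ together with the fact that any point in $\Omega^c$ is at distance $\ge d(Q_x,\Omega^c)\ge 0$ from every point of $Q_x$ forces $Q_x\subseteq\Omega$ (the degenerate case $Q_x=\{x\}$ being trivial).

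The main obstacle, and the one requiring the constant $\delta^{-2}$, is the upper bound in (iii). By the minimality of $k^*(x)$, the parent cube $\widetilde Q_x$ of $Q_x$ (at generation $k^*(x)-1$, which exists and is unique by (iii) of Theorem~\ref{thm:dyadic}) violates the Whitney condition:
$$
 d(\widetilde Q_x,\Omega^c)<\operatorname{diam}(\widetilde Q_x).
$$
Since $Q_x\subseteq\widetilde Q_x$, the triangle inequality yields
$$
 d(Q_x,\Omega^c)\le \operatorname{diam}(\widetilde Q_x)+d(\widetilde Q_x,\Omega^c)<2\operatorname{diam}(\widetilde Q_x).
$$
It then suffices to control $\operatorname{diam}(\widetilde Q_x)$ by $\operatorname{diam}(Q_x)$, which I will do via the two-sided size estimate \eqref{eqn:dycube1}: the outer bound $\operatorname{diam}(\widetilde Q_x)\le 4C_0\delta^{k^*(x)-1}$ combined with the inner bound giving $\operatorname{diam}(Q_x)$ a lower bound of order $\delta^{k^*(x)}$ produces one factor of $\delta^{-1}$ from the generation shift and a second factor of $\delta^{-1}$ from the ratio $C_0/c_0$ controlled by the standing hypothesis $12C_0\delta\le c_0$ in \eqref{eqn:size}. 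Tracking these constants carefully absorbs everything into the single factor $\delta^{-2}$, completing the proof.
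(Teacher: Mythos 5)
Your argument is correct, and it reaches the same constant $\delta^{-2}$, but it runs along a genuinely different track from the paper's. The paper stratifies $\Omega$ into layers $\Omega_k=\{x:\,8C_0\delta^k\le d(x,\Omega^c)<8C_0\delta^{k-1}\}$, collects the generation-$k$ dyadic cubes meeting $\Omega_k$, and then passes to maximal cubes; both inequalities in (iii) then drop out of the defining bounds of the layer (the upper one via $d(Q,\Omega^c)\le 8C_0\delta^{k-1}=12C_0c_0^{-1}\delta^{-1}(2c_0\delta^k/3)\le\delta^{-2}\mathrm{diam}(Q)$). You instead run a per-point stopping time: take the coarsest dyadic cube through $x$ satisfying $\mathrm{diam}(Q)\le d(Q,\Omega^c)$, and extract the upper bound in (iii) from the \emph{failure} of this condition at the parent, paying one $\delta^{-1}$ for the generation shift and one $\delta^{-1}$ for $12C_0/c_0$ via \eqref{eqn:size} — exactly as you computed, $d(Q_x,\Omega^c)<2\,\mathrm{diam}(\widetilde Q_x)\le 8C_0\delta^{k^*-1}\le (12C_0/c_0)\delta^{-1}\,\mathrm{diam}(Q_x)\le\delta^{-2}\mathrm{diam}(Q_x)$. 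Your approach makes the lower bound in (iii) and the disjointness argument cleaner (the selection condition is literally the lower bound, and maximality of the stopping generation gives disjointness directly), whereas the paper's layering makes the upper bound immediate without invoking the parent cube. Both proofs share the same two implicit ingredients: the lower diameter bound $\mathrm{diam}(Q^k_\beta)\ge 2c_0\delta^k/3$ coming from the inner ball in \eqref{eqn:dycube1}, and a boundedness-from-below statement guaranteeing the selection terminates (in the paper, that maximal cubes in $\widetilde{\mathcal F}$ exist; in yours, that the admissible set of generations has a minimum). On the latter point you only verify that admissible $k$ exist for $k$ large; you should add the one-line observation that the admissible set is upward closed (shrinking the cube improves both sides of the inequality) and bounded below, since $\mathrm{diam}(Q^{(k)}(x))\ge 2c_0\delta^k/3\to\infty$ as $k\to-\infty$ while $d(Q^{(k)}(x),\Omega^c)\le d(x,\Omega^c)<\infty$. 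With that sentence added, your proof is complete.
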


\begin{proof}
The proof is standard. For the reader's convenience,  we give a sketch here. Recall that $  \mathscr{D}=\bigcup_{k\in \mathbb{Z}} \mathscr{D}_k$,  $\mathscr{D}_k= \{Q_\beta^k:\  \beta\in \mathscr{J}_k\},$ is a system of dyadic cubes in $X$. For any $k\in \mathbb{Z}$, denote
$$
    \Omega_k:=\left\{x\in \Omega: \, 8C_0\delta^k\leq d(x,\Omega^c)< 8C_0\delta^{k-1}\right\},\   \   \    \   \mathcal{F}_k:=\left\{Q\in \mathscr{D}_k:\,   Q\cap \Omega_k\neq \emptyset \right\},
$$
where $C_0$ and $\delta$ are parameters in Theorem~\ref{thm:dyadic}. It's clear that $\Omega=\bigcup_{k\in \mathbb{Z} }\Omega_k$.

Let $\mathcal{\widetilde{F}}=\bigcup_{k\in \mathbb{Z}}\mathcal{F}_k$ , one may verify that
$$
    \Omega=\bigcup_{Q\in \mathcal{\widetilde{F}}} Q.
$$
By (ii) of Theorem~\ref{thm:dyadic}, for any two distinct dyadic cubes $Q'$ and $Q''$ of $\mathscr{D}$, either $Q'\cap Q''=\emptyset$ or one cube is contained in another one. Therefore,
$$
     \Omega=\bigcup_{Q\in \mathcal{F}} Q,   \   \    \   \text{where}\   \    \mathcal{F}:=\left\{Q\in \mathcal{\widetilde{F}}:\    Q \text{   is maximal}\right\},
$$
and it's clear that all cubes in $\mathcal{F}$ are mutually disjoint.

It remains to show {\rm (iii)}. Note that for any $Q\in \mathcal{F}$, then there exists unique $k\in \mathbb{Z}$ such that $Q\in \mathcal{F}_k$. Then one may apply \eqref{eqn:dycube1} and the definition of $\mathcal{F}_k$ to see
$$
    d(Q,\Omega^c)\geq d(\Omega_k,\Omega^c)-{\rm diam }(Q)\geq 8C_0\delta^k -4C_0\delta^k\geq  {\rm diam }(Q),
$$
and
$$
     d(Q,\Omega^c)\leq \sup_{x\in \Omega_k} d(x,\Omega^c)=8C_0\delta^{k-1}=12C_0c_0^{-1}\delta^{-1} (2c_0\delta^k/3)\leq \delta^{-2}   {\rm diam }(Q),
$$
where the last inequality have used the size condition \eqref{eqn:size}. Thus we finish the proof of Lemma~\ref{lem:Whitney}.
\end{proof}

\smallskip

\subsection{Muckenhoupt weights}

\smallskip
We sum up some  properties of $A_p(X)$ classes.

\begin{lemma}\label{lee2.1}
We have the following properties:

\begin{enumerate}[\rm (i)]
	\item $A_1(X)\subseteq A_p(X) \subseteq A_q(X)$ for $1\leq p\leq q \leq \infty$.
	
	\smallskip
	
	\item If $w\in A_p(X)$, $1<p\leq \infty$, then there exists $1<q<p$ such that $w\in A_q(X)$.

    \smallskip

   \item Let $1<p<\infty$.  Then $w\in A_p(X)$ if and only if $w^{1-p'}\in A_{p'}(X).$

   \smallskip

   \item If $w\in A_p(X)$, $1\leq p<\infty$, then for any ball $B\subseteq X$ and   $E\subseteq B$, there holds
   $$
    \frac{w(B)}{w(E)}\leq [w]_{A_p} \left(\frac{\mu(B)}{\mu(E)}\right)^p.
   $$
\end{enumerate}
\end{lemma}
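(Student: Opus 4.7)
The plan is to verify (i), (iii) and (iv) by direct manipulation of the definitions, while part (ii) is the genuinely nontrivial item and requires the reverse H\"older self-improvement of $A_p$ weights on $X$.

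For (iii), set $v:=w^{1-p'}=w^{-1/(p-1)}$; using $p'-1=1/(p-1)$ one checks that $v^{-1/(p'-1)}=w$, and substitution in the $A_{p'}$ expression gives $[v]_{A_{p'}}=[w]_{A_p}^{1/(p-1)}$, so the two conditions are equivalent. For (i), the inclusion $A_1\subseteq A_p$ follows from the pointwise bound $\frac{1}{\mu(B)}\int_B w\,d\mu\leq [w]_{A_1}w(x)$ a.e.\ on $B$: raise to the $-1/(p-1)$ power, average over $B$, and take the $(p-1)$-th power to obtain $[w]_{A_p}\leq [w]_{A_1}^p$. For $1<p\leq q$, Jensen's inequality applied to the convex map $t\mapsto t^{(q-1)/(p-1)}$ acting on the average $\frac{1}{\mu(B)}\int_B w^{-1/(q-1)}\,d\mu$ yields $[w]_{A_q}\leq [w]_{A_p}$. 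For (iv) with $p>1$, apply H\"older's inequality with exponents $(p,p')$ to $\mu(E)=\int_E w^{1/p}w^{-1/p}\,d\mu$, raise to the $p$-th power, estimate $\int_E w^{-1/(p-1)}\leq \int_B w^{-1/(p-1)}$, and invoke the $A_p$ condition on $B$ to conclude
\[
\frac{w(B)}{w(E)}\leq [w]_{A_p}\Big(\frac{\mu(B)}{\mu(E)}\Big)^p;
\]
the case $p=1$ is immediate from the pointwise $A_1$-bound integrated over $E$.

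The hard part is (ii). My plan is first to establish the reverse H\"older inequality for $A_p$ weights on $X$: there exist $\varepsilon=\varepsilon([w]_{A_p})>0$ and $C>0$ such that
\[
\Big(\frac{1}{\mu(B)}\int_B w^{1+\varepsilon}\,d\mu\Big)^{1/(1+\varepsilon)}\leq \frac{C}{\mu(B)}\int_B w\,d\mu
\]
for every ball $B\subseteq X$. Granted this, part (iii) transfers the improvement to the dual weight $w^{-1/(p-1)}$, and a H\"older argument with the improved exponent produces some $q\in(1,p)$ with $w\in A_q$. To prove the reverse H\"older inequality, the main obstacle, I would run the classical Coifman--Fefferman Calder\'on--Zygmund stopping-time argument on the dyadic cubes furnished by Theorem~\ref{thm:dyadic} in place of Euclidean dyadic cubes, using Lemma~\ref{lem:Whitney} to decompose the level sets $\{w>\lambda\}$ and applying (iv) to control the $w$-measures of the stopping cubes in terms of their $\mu$-measures. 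No fundamentally new idea beyond this translation to the space-of-homogeneous-type setting is needed, but it is the only step that does not reduce to a one-line calculation.
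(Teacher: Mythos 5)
The paper does not actually prove this lemma; it simply declares the statement standard and refers to Str\"omberg--Torchinsky \cite{ST}. Your proposal therefore cannot ``match'' the paper's proof, but it is worth comparing against the standard argument that \cite{ST} contains, which is what you are reconstructing. Your treatments of (i), (iii) and (iv) are correct and complete: the duality computation in (iii) giving $[w^{1-p'}]_{A_{p'}}=[w]_{A_p}^{1/(p-1)}$ is right; the Jensen argument for $A_p\subseteq A_q$ is right (and for $A_1\subseteq A_p$ your chain actually yields the sharper bound $[w]_{A_p}\leq [w]_{A_1}$, since after raising the averaged inequality to the power $p-1$ the factor $[w]_{A_1}$ appears only to the first power --- your stated $[w]_{A_1}^p$ is a harmless overestimate); and the H\"older argument for (iv) together with the trivial $p=1$ case is exactly the standard proof.

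For (ii) your strategy --- reverse H\"older self-improvement for the dual weight $w^{-1/(p-1)}\in A_{p'}$ (via (iii)), then choosing $q$ with $q-1=(p-1)/(1+\varepsilon)$ --- is the classical and correct route, and the reverse H\"older inequality on spaces of homogeneous type is indeed available (the paper itself cites Hyt\"onen--P\'erez--Rela \cite{HPR} for a sharp version). However, as written this part is a plan rather than a proof: the stopping-time argument is named but not executed. One concrete step you describe would not work as stated: you propose to ``use Lemma~\ref{lem:Whitney} to decompose the level sets $\{w>\lambda\}$,'' but $w$ is merely locally integrable, so $\{w>\lambda\}$ need not be open and the Whitney covering lemma does not apply to it. The correct device is the dyadic Calder\'on--Zygmund decomposition relative to averages: within a fixed dyadic cube one takes the maximal dyadic subcubes $Q$ (from Theorem~\ref{thm:dyadic}) on which $\frac{1}{\mu(Q)}\int_Q w\,d\mu>\lambda$, and it is these stopping cubes --- not a Whitney cover of a level set --- whose $w$- and $\mu$-measures one compares using (iv). With that substitution, and the Lebesgue differentiation theorem on doubling metric measure spaces to pass from cube averages back to pointwise values, your outline becomes the standard proof; also note that for $p=\infty$ item (ii) is vacuous by the definition of $A_\infty$ as $\bigcup_{p>1}A_p$.
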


\begin{proof}
The proof is standard,  and we refer to  \cite{ST} for instance. 	
\end{proof}

\subsection{Denotations and properties of weighted tent space}
For a closed subset $F$ of $X$, let $\mathcal{R}_\alpha (F)$ be the union of all cones with vertices in $F$,
$$
    \mathcal{R}_\alpha (F) =\bigcup_{x\in F} \Gamma_\alpha (x),
$$
where $\Gamma_\alpha(x)=\left\{(y,t)\in X\times (0,+\infty):\,  d(x,y)< \alpha t\right\}$.
Denote $O=F^c$ and assume that $\mu(O)<+\infty$, the tent over $O$ with aperture $\alpha$ is defined by
$$
      T_\alpha(O)=(\mathcal{R}_\alpha(O^c))^c=\{(x,t)\in X\times (0,+\infty): \, d(x,O^c)\geq \alpha t\}.
$$
In the sequel, we write $\Gamma(x)$, $\mathcal{R}(F)$ and $T(O)$ instead of $\Gamma_1(x)$, $\mathcal{R}_1(F)$ and $T_1(O)$, respectively.

For any fixed $\gamma\in (0,1)$, say that $x\in X$ has global $\gamma-$density with respect to $F$ if
$$
        \frac{\mu(B\cap F)}{\mu(B)}\geq \gamma
$$
for all ball $B$ centered at $x$. The set of all such $x$'s is denoted by $F^*$. It's a closed subset of $F$. Define also $O^*=(F^*)^c$. It's clear that $O\subseteq O^*$. Moreover,
\begin{equation}\label{eqn:O*}
	  O^*=\big\{x:\, \mathcal{M}(\chi_O)(x)>1-\gamma\big\},
\end{equation}
where $\mathcal{M}$ denotes the  Hardy-Littlewood maximal function on $X$ and $\chi_O$ is the characteristic function on $O$. As a consequence,
\begin{equation}\label{eqn:O*2}
    \mu(O^*)\leq C_\gamma \mu(O).
\end{equation}

\smallskip

The following key estimate will be used in the sequel,  which was  established by Coifman, Meyer and Stein in \cite{CMS} (for $X=\R$) and by  Russ in \cite{Ru06} (for spaces of homogeneous type $X$).

\begin{lemma}\label{tent space}
Let $\eta\in (0,1)$. Then there exist $\gamma\in (0,1)$ and $C_{\gamma, \eta,n}>0$, where $n$ is the parameter in \eqref{eqn:doubling2}, such that, for any closed subset $F$ of $X$ with $\mu(F^c)<+\infty$ and any nonnegative measurable function  $H(y,t)$ on $X\times (0,+\infty)$.   Then
\begin{equation}\label{eqn:tent}
	\iint_{\mathcal{R}_{1-\eta}(F^*)} H(y,t)V(y,t)\,d\mu(y)dt \leq C_{\gamma,\eta,n} \int_F \left(\iint_{\Gamma(x)} H(y,t) \,d\mu(y)dt\right) d\mu(x),
\end{equation}
where $F^*$ denotes the set of points in $X$ with global density $\gamma$ with respect to $F$.
\end{lemma}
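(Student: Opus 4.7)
The plan is to reduce the integral inequality to a pointwise comparison between $V(y,t)$ and $\mu(B(y,t)\cap F)$ on the region $\mathcal{R}_{1-\eta}(F^*)$, and then to close with a single application of Tonelli's theorem. First I would rewrite the right-hand side: since $(y,t)\in\Gamma(x)$ if and only if $x\in B(y,t)$, Tonelli gives
$$\int_F \iint_{\Gamma(x)} H(y,t)\,d\mu(y)dt\,d\mu(x)=\iint_{X\times(0,\infty)} H(y,t)\,\mu\bigl(B(y,t)\cap F\bigr)\,d\mu(y)dt.$$
Because $H\geq 0$, proving \eqref{eqn:tent} is therefore equivalent to establishing the pointwise bound
$$V(y,t)\leq C_{\gamma,\eta,n}\,\mu\bigl(B(y,t)\cap F\bigr)\qquad \text{for every }(y,t)\in\mathcal{R}_{1-\eta}(F^*).$$

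For this pointwise estimate, I would fix $(y,t)\in\mathcal{R}_{1-\eta}(F^*)$ and choose $x\in F^*$ with $d(x,y)<(1-\eta)t$. The triangle inequality then yields the chain of inclusions $B(x,\eta t)\subseteq B(y,t)\subseteq B(x,2t)$: indeed $z\in B(x,\eta t)$ forces $d(z,y)<\eta t+(1-\eta)t=t$, and $z\in B(y,t)$ forces $d(z,x)<t+(1-\eta)t<2t$. Applying the global $\gamma$-density of $x\in F^*$ to the ball $B(x,\eta t)$ produces
$$\mu\bigl(B(y,t)\cap F\bigr)\geq \mu\bigl(B(x,\eta t)\cap F\bigr)\geq \gamma\,V(x,\eta t),$$
while the strong homogeneity estimate \eqref{eqn:doubling2} gives
$$V(y,t)\leq V(x,2t)\leq C\,(2/\eta)^n\,V(x,\eta t).$$
Combining the two displays yields $V(y,t)\leq C\gamma^{-1}(2/\eta)^n\,\mu\bigl(B(y,t)\cap F\bigr)$, which is the required pointwise bound; integrating against $H$ over $\mathcal{R}_{1-\eta}(F^*)$ and comparing with the Tonelli identity above completes the proof.

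Any $\gamma\in(0,1)$ in fact serves in this argument — it only affects the size of the final constant $C_{\gamma,\eta,n}$ — but of course $\gamma$ must be fixed in order to define $F^*$, consistent with the level-set description \eqref{eqn:O*} and the measure bound \eqref{eqn:O*2} that will be invoked later. The only point requiring care is the bookkeeping of constants in the chain $V(y,t)\leq V(x,2t)\leq C(2/\eta)^n V(x,\eta t)$, where the exponent $n$ from \eqref{eqn:doubling2} plays the role of the Euclidean dimension; beyond that, the argument is a direct transplant of the Coifman–Meyer–Stein calculation to the space of homogeneous type $(X,d,\mu)$, with \eqref{eqn:doubling2} supplying the scale-invariant doubling bound needed to absorb the factor $V(x,\eta t)/V(x,2t)$.
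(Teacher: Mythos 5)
Your proof is correct. The paper does not reproduce an argument for this lemma --- it simply cites Coifman--Meyer--Stein and Russ --- and your reduction via Tonelli to the pointwise bound $V(y,t)\leq C_{\gamma,\eta,n}\,\mu(B(y,t)\cap F)$ on $\mathcal{R}_{1-\eta}(F^*)$, obtained from the inclusions $B(x,\eta t)\subseteq B(y,t)\subseteq B(x,2t)$ together with the global $\gamma$-density of $x\in F^*$ and the doubling estimate \eqref{eqn:doubling2}, is exactly the standard argument in those references. Your observation that any $\gamma\in(0,1)$ works for this particular inequality (only the constant changes) is also accurate.
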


One important observation for weighted tent spaces  will be used  in the paper.
\begin{lemma}\label{lem:L2 with compact} For any compact subset $K$ of $X\times (0,+\infty)$, we have
$$
   \|F\|_{L^2(K,d\mu(x)\frac{dt}{t})}\leq C(K,w,p) \|F\|_{T_{2,w}^q(X)}
$$
for every $F\in T_{2,w}^q(X)$ with ${\rm supp}\, F\subseteq K$, $1<q<\infty$ and $w\in A_q$.
\end{lemma}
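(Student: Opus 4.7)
The plan is to prove the estimate by reducing it to a one-sided pairing inequality between $T^q_{2,w}(X)$ and $T^{q'}_{2,w^{-1/(q-1)}}(X)$ (with $q':=q/(q-1)$), which follows directly from Fubini, Cauchy--Schwarz, and H\"older and does not require any full duality theorem. Since $K$ is compact in $X\times(0,+\infty)$, fix $x_0\in X$, $R>0$, and $0<a<b<\infty$ with $K\subseteq B(x_0,R)\times[a,b]$, and set $B^*:=B(x_0,R+b)$. A short check (if $d(x,x_0)\geq R+b$, then $d(x,y)\geq b\geq t$ for every $(y,t)\in K$) shows that $\mathcal{A}(G)$ is supported in $B^*$ whenever $\text{supp}\,G\subseteq K$, and the doubling property forces $V(x,t)$ to lie in some compact interval $[c_K,C_K]\subset(0,\infty)$ for all $x\in B(y,t)$ and $(y,t)\in K$.

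For any $G$ supported in $K$ and $x\in B^*$, the bounds on $V(x,t)$ and on $t\in[a,b]$ yield
$$
\mathcal{A}(G)(x)^2=\iint_{\Gamma(x)\cap K}|G(y,t)|^2\,\frac{d\mu(y)\,dt}{V(x,t)\,t}\leq C_K\iint_K|G(y,t)|^2\,\frac{d\mu(y)\,dt}{t},
$$
so $\mathcal{A}(G)(x)\leq C_K\|G\|_{L^2(K,d\mu(y)dt/t)}$ pointwise. Since $w\in A_q$, the dual weight $w':=w^{-1/(q-1)}$ satisfies $w'(B^*)<\infty$, whence
$$
\|G\|_{T^{q'}_{2,w'}(X)}=\Bigl(\int_{B^*}\mathcal{A}(G)^{q'}w'\,d\mu\Bigr)^{1/q'}\leq C_{K,w}\|G\|_{L^2(K,d\mu(y)dt/t)}.
$$
Separately, the doubling identity $\int_{B(y,t)}V(x,t)^{-1}d\mu(x)\asymp 1$, inserted into $\iint FG\,d\mu(y)dt/t$ and rearranged by Fubini, produces a triple integral whose inner cone integral is at most $\mathcal{A}(F)(x)\mathcal{A}(G)(x)$ by Cauchy--Schwarz; H\"older in $x$ against the pair $w^{1/q},w^{-1/q}$ (noting $w^{-q'/q}=w^{-1/(q-1)}$) then gives the one-sided pairing
$$
\Bigl|\iint F(y,t)G(y,t)\,\frac{d\mu(y)\,dt}{t}\Bigr|\leq C\,\|F\|_{T^q_{2,w}(X)}\|G\|_{T^{q'}_{2,w^{-1/(q-1)}}(X)}.
$$

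To conclude, apply the previous two estimates with $F$ replaced by the truncation $F_n:=F\mathbf{1}_{|F|\leq n}\in L^\infty(K)$ (this truncation is needed because we do not a priori know that the pairing integral converges for the full $F$), and note $\|F_n\|_{T^q_{2,w}(X)}\leq\|F\|_{T^q_{2,w}(X)}$ by monotonicity of $\mathcal{A}$. $L^2$-duality on the finite measure space $(K,d\mu(y)dt/t)$ yields
$$
\|F_n\|_{L^2(K,d\mu(y)dt/t)}=\sup_{\substack{\|G\|_{L^2(K)}\leq 1\\ \text{supp}\,G\subseteq K}}\Bigl|\iint F_nG\,\frac{d\mu\,dt}{t}\Bigr|\leq C_{K,w}\|F\|_{T^q_{2,w}(X)},
$$
and monotone convergence as $n\to\infty$ delivers the claim. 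The main obstacle is the pairing bound itself, i.e.\ the easy one-sided half of weighted tent-space duality; it bypasses the naive H\"older on $\int_{B^*}\mathcal{A}(F)^2\,d\mu$ (which would require strong integrability of $w^{-2/(q-2)}$, not afforded by $A_q$ for general $q$) by letting the compact support of $G$ control its dual tent norm through the local $L^1$-integrability of $w^{-1/(q-1)}$.
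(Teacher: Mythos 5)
Your proof is correct, but it follows a different route from the paper's. The paper disposes of this lemma in two lines: it quotes the \emph{unweighted} compact-support estimate $\|F\|_{L^2(K,d\mu(x)\frac{dt}{t})}\leq C(K)\|F\|_{T_2^1(X)}$ from \cite{CMS} (estimate (1.3)) or \cite[Lemma~3.3]{Am}, and then passes from $T_2^1(X)$ to $T_{2,w}^q(X)$ by the single H\"older step $\|F\|_{T_2^1(X)}\leq \big(w^{-1/(q-1)}(\widetilde K)\big)^{1/q'}\|F\|_{T_{2,w}^q(X)}$, where $\widetilde K={\rm supp}\,\mathcal A(F)$ is compact and $w^{-1/(q-1)}=w^{1-q'}\in A_{q'}$ is locally integrable precisely because $w\in A_q$. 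You instead give a self-contained argument: the pointwise bound $\mathcal A(G)(x)\leq C_K\|G\|_{L^2(K,d\mu\,dt/t)}$ on the bounded set $B^*$ gives $\|G\|_{T^{q'}_{2,w^{-1/(q-1)}}(X)}\leq C_{K,w}\|G\|_{L^2(K)}$, and combining this with the easy one-sided pairing inequality and $L^2$-duality on the finite measure space $(K,d\mu\,dt/t)$ (after truncation, which correctly handles the a priori integrability issue) yields the claim. Two remarks. First, there is no circularity: the pairing inequality you invoke is exactly the ``easy direction'' established at the very beginning of the paper's proof of Lemma~3.1, before Lemma~2.5 is ever used there (it enters only in the converse direction), and it requires nothing beyond Fubini, Cauchy--Schwarz and H\"older. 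Second, your closing observation is apt: the naive H\"older applied directly to $\int_{B^*}\mathcal A(F)^2\,d\mu$ would demand local integrability of $w^{-2/(q-2)}$, which $A_q$ does not supply for $q$ near $2$; both your route and the paper's sidestep this, each ultimately relying only on the local integrability of $w^{-1/(q-1)}$. What the paper's approach buys is brevity via an external citation; what yours buys is a proof that is entirely internal to the machinery already developed in Section~3.1 and makes the dependence of the constant on $w'(B^*)$ and on the geometry of $K$ explicit.
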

\begin{proof}
 This can be deduced by the unweighted estimate  $\|F\|_{L^2(K,d\mu(x)\frac{dt}{t})}\leq C(K,w,p) \|F\|_{T_{2}^1(X)}$ (see  (1.3) in\cite{CMS} or \cite[Lemma 3.3]{Am}) and $\|F\|_{T_{2}^1(X)}\leq  (w^{-1/(q-1)}(\widetilde{K}))^{1/{q'}}  \|F\|_{T_{2,w}^q(X)} $, where $\widetilde{K}={\rm supp}\, \mathcal{A}(F)$.
\end{proof}

\bigskip

\section{Duality and atomic decomposition for weighted tent spaces}
\setcounter{equation}{0}

\subsection{Duality  of  weighted tent spaces $T_{2,w}^q(X)$}

 We first  prove the duality result for weighted tent spaces  $T_{2,w}^q(X)$ for $1<q<\infty$, which will play an important role in the proof of Theorem \ref{thm:tentatom}.

\begin{lemma}\label{lem:tentdual}
Let $1< q<\infty$ and $w\in A_q(X)$. Then the dual of $T_{2,w}^q(X)$ is $T_{2,w^{-1/(q-1)}}^{q'}(X)$, with $1/q+1/{q'}=1$. More precisely, the pairing
$$
    \langle F, G\rangle :=\iint_{X\times (0,+\infty)}  F(x,t) G(x,t) \,d\mu(x)\frac{dt}{t},
$$
realizes $T_{2,w^{-1/(q-1)}}^{q'}(X)$ as equivalent with the dual of $T_{2,w}^q(X)$.
\end{lemma}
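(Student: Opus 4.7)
The statement splits into two parts: the pairing is bounded (so any $G \in T_{2,w^{-1/(q-1)}}^{q'}(X)$ induces a bounded functional on $T_{2,w}^q(X)$), and the pairing is surjective onto the dual. The first is a direct Fubini--H\"older computation; the second follows a Coifman--Meyer--Stein-style linearization, with the weights handled by Muckenhoupt theory.

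For boundedness, the workhorse is the Fubini identity
\[
\iint_{X \times (0,\infty)} H(y,t)\, d\mu(y)\, dt \;\asymp\; \int_X \left(\iint_{\Gamma(x)} H(y,t)\, \frac{d\mu(y)\, dt}{V(x,t)}\right) d\mu(x),
\]
valid for nonnegative $H$ by the doubling bound $\int_{B(y,t)} d\mu(x)/V(x,t) \asymp 1$. Applied to $H = |FG|/t$, Cauchy--Schwarz inside the cone yields $\mathcal{A}(F)(x)\mathcal{A}(G)(x)$, and a weighted H\"older inequality (with conjugate weight $w^{-q'/q} = w^{-1/(q-1)}$, since $q'/q = 1/(q-1)$) closes $|\langle F, G \rangle| \lesssim \|F\|_{T_{2,w}^q(X)} \|G\|_{T_{2,w^{-1/(q-1)}}^{q'}(X)}$.

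For surjectivity, let $\ell \in T_{2,w}^q(X)^*$. By Lemma~\ref{lem:L2 with compact}, $T_{2,w}^q(X) \hookrightarrow L^2(K, d\mu\, dt/t)$ continuously for every compact $K \subset X \times (0,\infty)$; an exhaustion $K_n \nearrow X \times (0,\infty)$ combined with Riesz representation extracts $G \in L^2_{\mathrm{loc}}(X \times (0,\infty), d\mu\, dt/t)$ with $\ell(F) = \iint F G\, d\mu\, dt/t$ for every compactly supported $F$. To upgrade to $G \in T_{2,w^{-1/(q-1)}}^{q'}(X)$ with controlled norm, work with truncations $G_n := G\chi_{K_n}$ and test $\ell$ against the extremizer
\[
F_n(y,t) := \overline{G_n(y,t)}\, \psi_n(y,t), \qquad \psi_n(y,t) := \int_{B(y,t)} \mathcal{A}(G_n)(u)^{q'-2}\, w(u)^{-1/(q-1)}\, \frac{d\mu(u)}{V(u,t)}.
\]
Fubini (unfolding the cone integral defining $\mathcal{A}(G_n)^2$) gives
\[
\ell(F_n) = \iint |G_n|^2 \psi_n\, \frac{d\mu\, dt}{t} = \int_X \mathcal{A}(G_n)^{q'}\, w^{-1/(q-1)}\, d\mu = \|G_n\|_{T_{2,w^{-1/(q-1)}}^{q'}(X)}^{q'},
\]
while $|\ell(F_n)| \leq \|\ell\|\, \|F_n\|_{T_{2,w}^q(X)}$. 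Combined with the companion estimate $\|F_n\|_{T_{2,w}^q(X)} \lesssim \|G_n\|_{T_{2,w^{-1/(q-1)}}^{q'}(X)}^{q'-1}$, this yields $\|G_n\| \lesssim \|\ell\|$ uniformly in $n$, and Fatou on $\mathcal{A}(G_n) \nearrow \mathcal{A}(G)$ delivers the conclusion.

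\textbf{Main obstacle.} The companion estimate is the heart of the argument and is where $w \in A_q(X)$ is essential. For $(y,t) \in \Gamma(x)$, doubling gives $B(y,t) \subseteq B(x, 2t)$ and $V(u,t) \asymp V(x,t)$ on $B(y,t)$, whence $\psi_n(y,t) \lesssim M\bigl[\mathcal{A}(G_n)^{q'-2} w^{-1/(q-1)}\bigr](x)$; pulling this out of the cone defining $\mathcal{A}(F_n)$ yields the pointwise bound $\mathcal{A}(F_n)(x) \lesssim M\bigl[\mathcal{A}(G_n)^{q'-2} w^{-1/(q-1)}\bigr](x)\, \mathcal{A}(G_n)(x)$. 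A weighted H\"older split in the outer integral, together with the $L^q_w(X)$-boundedness of the Hardy--Littlewood maximal operator (Muckenhoupt's theorem, valid precisely because $w \in A_q$), closes the estimate. The fact that one needs $A_q$ rather than merely $A_\infty$ at this step is exactly why Theorem~\ref{thm:tentatom} (which assumes only $w \in A_\infty$) cannot be obtained from a direct duality argument and must be derived by the further dyadic-Whitney construction outlined in the paper.
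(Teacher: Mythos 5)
Your overall architecture matches the paper's: the forward bound via Fubini, Cauchy--Schwarz on cones and weighted H\"older is exactly the paper's first step, and the Riesz-representation-plus-exhaustion setup for the converse is also the same. For $q=2$ your extremizer even coincides with the paper's test function $\overline{G_K}\,M_t(w^{-1})$, where the companion estimate closes via the pointwise $A_2$ inequality $M_t(w^{-1})\,M_t(w)\leq [w]_{A_2}$. The problem is the companion estimate for $q\neq 2$, which you declare to be the heart of the argument and then dispose of with ``a weighted H\"older split together with the $L^q_w$-boundedness of $M$.'' Writing $A=\mathcal{A}(G_n)$ and $v=w^{-1/(q-1)}$, your pointwise bound $\mathcal{A}(F_n)\lesssim M[A^{q'-2}v]\cdot A$ reduces the matter to
$$
\int_X \bigl(M[A^{q'-2}v]\bigr)^q A^q\, w\,d\mu \;\lesssim\; \int_X A^{q'}v\,d\mu ,
$$
which (since $\|A^{q'-2}v\|_{L^q_{A^qw}}^q=\int A^{q'}v$, using $v^qw=v$) is precisely the boundedness of $M$ on $L^q$ with the weight $A^q w$ --- a weight that has no reason to lie in $A_q$ uniformly in $G$. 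The natural H\"older split that keeps the second factor equal to $(\int A^{q'}v)^{1/\alpha'}$ forces $\alpha'=1/(q-1)$ (so it is unavailable for $q>2$) and leaves you needing $M$ bounded on $L^{q/(2-q)}_{w^{2/(2-q)}}$; since $2/(2-q)>1$, the power $w^{2/(2-q)}$ need not be in $A_{q/(2-q)}$ for general $w\in A_q$ (power weights $|x|^a$ with $a$ near $-n$ give counterexamples). A Fefferman--Stein transfer $\int (Mh)^qg\lesssim\int |h|^qMg$ fares no better, as it would require $M[A^qw]\lesssim A^qw$.

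The paper closes exactly this gap with two devices you are missing. For $1<q<2$ it decomposes $X$ into level sets $\Omega_j=\{2^j\leq \mathcal{A}(G_K)<2^{j+1}\}$ and keeps the dualizing function $\psi\in L^q$ general; on each $\Omega_j$ the factor $\mathcal{A}(G_K)^q$ is frozen at $2^{jq}$, so the maximal operator only has to be bounded on $L^q_{w\chi_{\Omega_j}}$ with a constant controlled by $[w]_{A_q}$ uniformly in $j$ (citing Hyt\"onen--P\'erez--Rela), and the sum telescopes against $\int_{\Omega_j}|\psi|^q\leq 1$. (Your $F_n$ is what you get by substituting the extremal $\psi$ into the paper's test function and discarding the level-set structure --- which is precisely the structure that makes the estimate provable.) For $2<q<\infty$ the paper does not argue directly at all: it deduces the result from reflexivity of $T^q_{2,w}$ for $1<q\leq 2$ via Eberlein--\v{S}mulyan, sidestepping both the sign problem in the exponent $q'-2<0$ of your extremizer and the failure of the H\"older split. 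As written, your proof has a genuine gap at its central step and does not cover the range $q>2$.
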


\begin{proof}
The proof is inspired by that of \cite [Theorem~2]{CMS} and \cite[Proposion~3.10]{Am}, while  some major modifications are needed due to the presence of the weight $w$.

For every  $F\in T_{2,w}^q(X)$ and $G\in T_{2,w^{-1/(q-1)}}^{q'}(X)$, where  $w\in A_\infty$ (in this part, the condition of $w$ can be weaken to $w\in A_\infty$ rather than $w\in A_q$), we  apply the doubling volume property, Fubini's theorem and H\"older's inequality to compute
	\begin{eqnarray*}
		& &\left|\iint_{X\times (0,+\infty)}  F(y,t) G(y,t) \,d\mu(y)\frac{dt}{t}\right| \\
		&\leq &   \iint_{X\times (0,+\infty)}  |F(y,t)|  |G(y,t)| \int_{B(y,t)} \frac{d\mu(x)}{V(y,t)}
		\,d\mu(y)\frac{dt}{t}\\
		&\leq& C_n \int_{X} \iint_{ \Gamma(x)} |F(y,t)||G(y,t)| \,\frac{d\mu(y)}{V(x,t)} \frac{dt}{t} \,d\mu(x)\\
		&\leq& C_n \int_X \mathcal{A}(F)(x) \mathcal{A}(G)(x) \,d\mu(x)\\
		&\leq& C_n \|\mathcal{A}(F)\|_{L_w^q(X)} \|\mathcal{A}(G)\|_{L_{w^{-1/(q-1)}}^{q'}(X)}\\
		&=& C_n \|F\|_{T_{2,w}^q(X)} \|G\|_{T_{2,w^{-1/(q-1)}}^{q'}(X)}.
		\end{eqnarray*}
As a result, we have showed that for $1<q<\infty$ and $w\in A_\infty$,
every $G\in T_{2,w^{-1/(q-1)}}^{q'} (X)$ induces a  bounded linear functional on $T_{2,w}^q(X)$, via $F \mapsto \iint_{X\times (0,+\infty)}  F(x,t) G(x,t) \,d\mu(x)\frac{dt}{t} $.

\smallskip

Let us prove the converse direction. Assume $w\in A_q$.

\smallskip

\noindent{\it Case 1:} ~ $1<q\leq 2$.

Suppose that $\ell(\cdot)$ is a bounded linear functional on $T_{2,w}^q(X)$ and we denote its norm by $\|\ell\|$. Notice that whenever $K$ is a compact set in $X\times (0,+\infty)$, and ${\rm supp}\, F\subseteq K$ with $F\in L^2(K, d\mu(x)\frac{dt}{t})$, we have ${\rm supp}\, \mathcal{A}(F)=:\widetilde{K}$,  which is a compact set  in $X$, determined by $K$, and
\begin{eqnarray*}
	\|F\|_{T_{2,w}^q(X)}&=&\left(\int_{X}  \big(\mathcal{A}(F)(x)\big)^q w(x) \,d\mu(x)\right)^{1/q}\\
	&\leq&  \left(\int_{\widetilde{K}}  \big(\mathcal{A}(F)(x)\big)^2 w(x) \,d\mu(x)\right)^{1/2} \left(\int_{\widetilde{K}} w(x)\,d\mu(x)\right)^{1/q-1/2}\\
	&=& \left(\iint_K |F(y,t)|^2 \int_{B(y,t)} \frac{w(x)\,d\mu(x)}{V(x,t)} \,d\mu(y)\frac{dt}{t}\right)^{1/2} w(\widetilde{K})^{1/q-1/2}\\
	&\leq & C_n \left(\iint_K |F(y,t)|^2 \frac{w(B(y,t))}{V(y,t)}\, d\mu(y)\frac{dt}{t}\right)^{1/2}w(\widetilde{K})^{1/q-1/2}.
\end{eqnarray*}
Notice that  there exist a  constant $C>0$ and some ball $B_{\max}\subseteq X$ such that for every  $(y,t)\in K$,
$$
    B(y,t)\subseteq B_{\max}\subseteq B(y,Ct),
$$
since $K\subseteq X\times (0,+\infty)$ is compact and then $\min_{(y,t)\in K}t>0$. Hence
$$
    \frac{w(B(y,t))}{V(y,t)}\leq C_n  \frac{w(B_{\max})}{V(B_{\max})}<\infty
$$
for any $(y,t)\in K$.  Therefore, for ${\rm supp}\, F\subseteq K $ with $F\in L^2(K,d\mu(x)\frac{dt}{t})$, we obtain
 $$
      \|F\|_{T_{2,w}^q(X)}\leq C(n,q,w,K) \|F\|_{L^2(K,d\mu(x)\frac{dt}{t})}
 $$
 for $1<q\leq 2$ and $w\in A_\infty$, and so
 $$
     |\ell(F)|\leq \|\ell\|\, \|F\|_{T_{2,w}^q(X)}\leq  C\|\ell\| \|F\|_{L^2(K,d\mu(x)\frac{dt}{t})},
 $$
 which implies that  $\ell(\cdot)$ induces a bounded linear functional on $L^2(K,d\mu(x)\frac{dt}{t})$, and is thus representable by a function $G=G_K\in L^2(K,d\mu(x)\frac{dt}{t})$ from the Riesz representation theorem (see \cite{Y} for example).  Furthermore, for any compact $K_1,K_2\subseteq X\times (0,+\infty)$
with $K_1\subseteq K_2$,  we can verify that  $G_{K_2}(x,t)=G_{K_1}(x,t)$ for  $(x,t)\in K_1$ by contradiction  and  testing $F=\chi_{K_1}\overline{(G_{K_2} -G_{K_1})}$.
 Taking an increasing family of such $K$ which exhaust $X\times (0,+\infty)$, gives us a function $G$, which  is locally in $L^2(X\times (0,+\infty), d\mu(x)\frac{dt}{t})$ and $G\chi_K=G_K$, and so that $\ell(F)=\iint_{X\times (0,+\infty)} F(x,t)G(x,t) \,d\mu(x)\frac{dt}{t}$, whenever $F\in T_{2,w}^q(X)$ with compact support. (Such $F$ are in  $L^2(X\times (0,+\infty), d\mu(x)\frac{dt}{t})$ with compact support by Lemma \ref{lem:L2 with compact}). By noting that the set of such $F$ is dense in $T_{2,w}^q(X)$,  it remains to show that for any compact set $K$ in $X\times (0,+\infty)$,
 \begin{equation}\label{eqn:tentdual1}
 	 \|G_K\|_{T_{2,w^{-1/(q-1)}}^{q'}(X)} \leq C\|\ell\|
 \end{equation}
where the constant $C$ is independent of the compact set $K$.

\smallskip

Next, we will prove \eqref{eqn:tentdual1} in  two subcases as follows.

\noindent{\it Subcase 1-1:} ~ $q=2$.

 We apply the doubling volume property and Fubini's theorem again to see
 \begin{eqnarray*}
    \|G_K\|_{T_{2,w^{-1}}^2(X)}^2 &=& \int_X |\mathcal{A}(G_K)(x)|^2 w^{-1}(x)\,d\mu(x)\\
    &\leq&\int_X \iint_{\Gamma(x)} |G_k(y,t)|^2 \,\frac{d\mu(y)}{V(y,t)}\frac{dt}{t}w^{-1}(x) \,d\mu(x)\\
    &\leq& C_n \iint_{X\times (0,+\infty)}|G_K(y,t)|^2 M_t(w^{-1})(y) \,d\mu(y)\frac{dt}{t},
 \end{eqnarray*}
where
 $$
      M_t(w^{-1})(y):=\frac{1}{V(y,t)} \int_{B(y,t)}  w^{-1}(x)\,d\mu(x).
 $$
 Note that the last integral above  can be written as $\langle \tilde{F}, G_K\rangle$ with $\tilde{F}(y,t)=\overline{G_K(y,t)} M_t(w^{-1})(y)$.
 Hence,
 $$
    \|G_K\|_{T_{2,w^{-1}}^2(X)}^2\leq C_n \, \ell(\tilde{F})\leq C_n \|\ell\| \|\tilde{F}\|_{T_{2,w}^2(X)}.
 $$
 In order to prove \eqref{eqn:tentdual1}, it suffices to show there exists  $C>0$, independent of $K$, such that
\begin{align}\label{e3.4}
    \|\tilde{F}\|_{T_{2,w}^2(X)}\leq C \|G_K\|_{T_{2,w^{-1}}^2(X)}.
\end{align}
By noting that, $M_t (w^{-1})(y) M_t w(y) \leq [w]_{A_2}$ for any $y\in X, t>0$,  we then obtain
\begin{eqnarray*}
    \|\tilde{F}\|_{T_{2,w}^2(X)}^2 &\leq& C_n \iint_{X\times (0,+\infty)} |G_K(y,t)|^2 |M_t(w^{-1})(y)|^2 M_tw(y)\, d\mu(y)\frac{dt}{t}\\
    &\leq& C_{n,w}  \iint_{X\times (0,+\infty)} |G_K(y,t)|^2 M_t w^{-1}(y)\, d\mu(y)\frac{dt}{t}\\
    &\leq& C_{n,w} \int_X w^{-1}(x) \iint_{\Gamma(x)} |G_K(y,t)|^2\, \frac{d\mu(y)}{V(x,t)}\frac{dt}{t} \,d\mu(x)\\
    &=&C_{n,w} \|G_K\|_{T_{2,w^{-1}}^2(X)}^2,
\end{eqnarray*}
which yields  \eqref{e3.4}.

\smallskip

\noindent{\it Subcase 1-2:} ~ $1<q<2$.

Denote
$$
    \Omega_j=\big\{x\in X:\,  2^j\leq \mathcal{A}(G_K)(x)<2^{j+1}\big\},   \ \ {\rm for \  any} \ j\in \mathbb{Z}.
$$
Then
\begin{eqnarray*}
   & &\|G_K\|_{T_{2,w^{-1/(q-1)}}^{q'}(X)}=\|\mathcal{A}(G_K)w^{-1/q}\|_{L^{q'}(X)}\\
   &=&\sup_{\|\psi\|_{L^q(X)} \leq 1} \int_X \mathcal{A}(G_K)(x)w^{-1/q}(x) \psi(x)\, d\mu(x)\\
   &\leq &   \sup_{\|\psi\|_{L^q(X)} \leq 1}   \sum_{j\in \mathbb{Z}} 2^{-j} \int_{\Omega_j} \left( \mathcal{A}(G_K)(x) \right)^2 w^{-1/q}(x) \psi(x)\, d\mu(x)\\
   &\leq & C_n  \sup_{\|\psi\|_{L^q(X)} \leq 1}   \iint_{X\times (0,+\infty)} |G_K(y,t)|^2 M_t\Big(\sum_{j\in \mathbb{Z}} 2^{-j}\chi_{\Omega_j} \psi w^{-1/q}\Big)(y) \, d\mu(y)\frac{dt}{t}\\
   &=& C_n  \sup_{\|\psi\|_{L^q(X)} \leq 1}    \langle  H , G_K  \rangle
   \leq   C_n  \sup_{\|\psi\|_{L^q(X)} \leq 1} \|\ell\| \|H\|_{T_{2,w}^q(X)},
\end{eqnarray*}
where $H(y,t)=\overline{G_K}(y,t) M_t\left(\sum_{j\in \mathbb{Z}} 2^{-j}\chi_{\Omega_j} \psi w^{-1/q}\right)(y)$. Hence it suffices to show
$$
     \|H\|_{T_{2,w}^q(X)}\leq C.
$$
To this end, notice that
$$
    \mathcal{A}(H)(x)\leq \mathcal{A}(G_K)(x) \mathcal{M}\left(\sum_{j\in \mathbb{Z}} 2^{-j}\chi_{\Omega_j} \psi w^{-1/q}\right)(x),
$$
where  $\mathcal{M}$ denotes the Hardy-Littlewood maximal operator. Notice that  $[w\chi_{\Omega_j}]_{A_q}\leq [w]_{A_q}$ uniformly for every $j\in \mathbb{Z}$. Hence we may apply the boundedness of Hardy-Littlewood maximal operators on $L^q_{w\chi_{\Omega_i}}$ (see \cite[Theorem~1.3]{HPR}) to obtain
\begin{eqnarray*}
      \|H\|_{T_{2,w}^q(X)}^q &\leq &   \Big\|\mathcal{A}(G_K) \mathcal{M}\big(\sum_{j\in \mathbb{Z}} 2^{-j}\chi_{\Omega_j} \psi w^{-1/q}\big)\Big\|_{L_w^q(X)}^q\\
      &\leq& C \sum_{i\in\mathbb{Z}} \int_{\Omega_i} 2^{iq}  \mathcal{M}^q\big(\sum_{j\in \mathbb{Z}} 2^{-j}\chi_{\Omega_j} \psi w^{-1/q}\big)(x) \big(w \chi_{\Omega_i} \big)(x) \,d\mu(x)\\
      &\leq& C\sum_{i\in\mathbb{Z}}  2^{iq} \left([w\chi_{\Omega_i}]_{A_q}\right)^{\frac{q}{q-1}}  \int_{X} \Big| \sum_{j\in \mathbb{Z}} 2^{-j}\chi_{\Omega_j} \psi w^{-1/q}\Big|^q (x)  \big(w \chi_{\Omega_i} \big)(x) \,d\mu(x)\\
      &\leq& C [w]_{A_q}^{\frac{q}{q-1}}\sum_{i\in\mathbb{Z}}  2^{iq} \int_{X} \Big| \sum_{j\in \mathbb{Z}} 2^{-j}\chi_{\Omega_j} \psi w^{-1/q}\Big|^q (x)  \big(w \chi_{\Omega_i} \big)(x) \,d\mu(x)\\
      &\leq & C_{q,w} \sum_{i\in\mathbb{Z}} \int_{\Omega_i } |\psi(x)|^q \,d\mu(x)\leq C_{q,w},
\end{eqnarray*}
as desired. So we finish the proof of Lemma~\ref{lem:tentdual} for $1<q<2$.

\medskip

\noindent{\it Case 2:} $2<q<\infty$.

To show the Lemma~\ref{lem:tentdual} for $2<q<\infty$, it suffices to prove that the space $T_{2,w}^q(X)$ is reflexive  for $w\in A_q$ and $1<q\leq 2$.  The remaining argument is to make use of the Eberlein-Smulyan theorem and Lemma \ref{lem:L2 with compact}, that is very  similar to the last part for the proof of Theorem~2 in \cite{CMS}. We skip the details.
\end{proof}

\smallskip

\begin{remark}
We should note that in the case of $X=\R$,  the dual of   $T^1_{2,w}(\mathbb{R}^n)$ is $T^\infty_{2,w}(\mathbb{R}^n)$ if $w\in A_\infty(\mathbb{R}^n)$. It has been proved by Cao, Chang, Fu and Yang in \cite{CCFY}. Their proof can be extend to general spaces of homogeneous type with minor modification. Here, we do not treat this endpoint result since it is uninvolved  in the proof of Theorem \ref{thm:tentatom}.
\end{remark}

\medskip

\subsection{Proof of Theorem~\ref{thm:tentatom}.}
Let us first prove  (ii).	Suppose that $F=\sum_{j=0}^\infty \lambda_j \mathfrak{a}_j(x,t)$, where $\mathfrak{a}_j(x,t)$ are $q$-atoms of $T_{2,w}^p(X)$ and $\{\lambda_j\}_{j=0}^\infty \in \ell^p$.
It follows from Definition~\ref{def:tentatom} that for  every $\mathfrak{a}_j$, there exists a ball $B_j\subseteq X$ such that
$$
     {\rm supp}\, \mathfrak{a}_j\subseteq T(B_j)   \ \ {\rm and}\ \ 	    \left(\int_X  \big(\mathcal{A}(\mathfrak{a}_j)(x)\big)^q w(x)\,d\mu(x)\right)^{1/q}\leq w(B_j)^{1/q -1/p}.
$$
Hence ${\rm supp}\,\mathcal{A}( \mathfrak{a}_j)\subseteq B_j$ and  one may apply H\"older inequality to obtain that for $p\in (0,1]$ and $q\in (1,\infty)$,
\begin{eqnarray*}
	\int_X \big(\mathcal{A}(\mathfrak{a}_j)(x)\big)^p w(x)\,d\mu(x) &\leq & \left(\int_{B_j}   \big(\mathcal{A}(\mathfrak{a}_j)(x)\big)^q w(x)\,d\mu(x)\right)^{p/q}   \left(\int_{B_j} w(x)\,d\mu(x)\right)^{1-p/q}\\
	&\leq& w(B_j)^{p(1/q -1/p)}w(B_j)^{1-p/q} =1.
\end{eqnarray*}
This, combined with the  fact that  $(\sum_k |d_k|)^p\leq \sum_k |d_k|^p$ for $p\in (0,1]$, deduces
$$
    \int_X \big(\mathcal{A}(F)(x)\big)^p w(x)\,d\mu(x)\leq \sum_j |\lambda_j|^p \int_X \big(\mathcal{A}(\mathfrak{a}_j)(x)\big)^pw(x)\,d\mu(x)\leq \sum_j |\lambda_j|^p ,
$$
as desired.

\medskip

Next  we will  prove  (i).  In the sequel we fix  parameters $\eta=1/2$ and $\gamma\in (0,1)$ such that the estimate \eqref{eqn:tent} in Lemma~\ref{tent space} holds. For each $k\in \mathbb{Z}$,
\begin{equation}\label{eqn:Omegak}
	  \Omega_k:=\left\{x\in X: \mathcal{A}(F)(x)>2^k/C_{\gamma,1/2,n}\right\},
\end{equation}
where $C_{\gamma, 1/2,n}$ is the constant in \eqref{eqn:tent} for  $\eta=1/2$. Then
$$
    \Omega^*_k= \{x\in X: {\mathcal M}(\chi_{\Omega_k})(x)> 1-\gamma  \}
$$
by \eqref{eqn:O*}. Clearly $\Omega_k\subseteq \Omega_k^{\ast}$ and $\mu(\Omega_k^{\ast})\leq  C_\gamma\, \mu(\Omega_k)$ for every $k\in{\Bbb Z}$. Moreover,
\begin{equation}\label{eqn:Omegakc}
    (\Omega^*_k)^c=(^c \Omega_k)^* \quad \text{and}\qquad (T_{\alpha} (\Omega^*_k))^c=\mathcal{R}_\alpha((^c \Omega_k)^*),  \ \ \  \text{for}\  \alpha>0.
\end{equation}

For every open set $O\subseteq X$, denote
$$
    \widehat{O}:=T_{1-\eta}(O)=T_{1/2}(O)
$$
for simplicity. It is known from \cite[P. 130]{Ru06} that
\begin{align}\label{support of F}
    {\rm supp} \, F \subseteq \bigcup_k \, T_{1/2}(\Omega_k^*).
\end{align}

Now let $\{Q_j^k\}_{j\in \mathbb{Z}}$ be a  Whitney decomposition of  $\Omega^*_k$ from Lemma~\ref{lem:Whitney}. Let  $C_1>0$ which will be determined later, and  $B_j^k:=B(x_j^k,C_1{\rm diam\,}(Q_j^k))$, where $x_j^k$ is the   center point of $Q_j^k$.  Then for every $j,k\in{\Bbb Z}$, we define
 \begin{equation}\label{e3.2}
    \Delta_j^k:=T(B_j^k)\cap\big(Q^k_j\times (0,+\infty)\big)\cap \big(\widehat{\Omega^*_k} \big\backslash {\widehat{\Omega^*_{k+\!1}}}\big).
\end{equation}
From this definition, one may combine   Lemma~\ref{lem:Whitney}  to see $\Delta_j^k$'s are disjoint for different $j$ or $k$. Furthermore, notice that $\widehat{\Omega_k^*}\subseteq \widehat{\Omega_{k'}^*}$ for any $k'<k$. Then for any given $(y,t)\in X\times (0,+\infty)$, we can set
$$
     k_0:=\max_{k\in \mathbb{Z}:\, (y,t)\in  \widehat{\Omega_k^*}} k,
$$
and obtain $(y,t)\in \widehat{\Omega_{k_0}^*}\setminus \widehat{\Omega_{k_0+1}^*}$, which yields $d(y, {^c \Omega_{k_0}^*})\geq (1-1/2)t=t/2$. Moreover, there exists a unique $Q_{j_0}^{k_0}$ from the Whitney decomposition of $\Omega_{k_0}^*$ such that $y\in Q_{j_0}^{k_0}$. Since $d(Q_{j_0}^{k_0}, {^c \Omega_{k_0}^*})\leq \delta^{-2} {\rm diam }(Q_{j_0}^{k_0})$, then $t\leq 2\,d(y,{^c \Omega_{k_0}^*})\leq   2\big(\delta^{-2}+1\big) {\rm diam }(Q_{j_0}^{k_0}) $ and so $(y,t)\in T(B_{j_0}^{k_0})$ by taking $C_1$ large sufficiently,  for instance, one may take
\begin{equation}\label{eqn:C1-1}
	  C_1\geq 2\delta^{-2}+3.
\end{equation}

From this and (\ref{support of F}),  we can write
\begin{eqnarray*}
       F(x,t)&=&\sum_k \sum_j F(x,t)\chi_{\Delta_j^k} (x,t)\\
       &=&\sum_k \sum_j \lambda_j^k \bigg( \frac{1}{\lambda_j^k} F(x,t) \chi_{\Delta_j^k}(x,t)\bigg)
       =: \sum_k \sum_j \lambda_j^k \mathfrak{a}_j^k(x,t),
\end{eqnarray*}
where
$$
	     \lambda_j^k =2^k w(B_j^k)^{1/p}.
$$

Let us check that $\{\mathfrak{a}_j^k\}$ are $q$-atoms of $T_{2,w}^p(X)$. Note that  $q_1$-atoms of $T_{2,w}^p(X)$ must be $q_2$-atoms of $T_{2,w}^p(X)$ for any $q_1>q_2$. Hence, we only need to check that $\{\mathfrak{a}_j^k\}$ are $q$-atoms of $T_{2,w}^p(X)$ for $q$ large sufficiently.  It is easy to see that  ${\rm supp}\, \mathfrak{a}_j^k \subseteq T({B_j^k})$. We will verify the size condition (ii) of Definition~\ref{def:tentatom} for $q$ large sufficiently.

For $w\in A_\infty$, there exists some $q_0\geq 1$ such that $w\in A_{q_0}$ and then $w\in A_q$ for any $q\geq q_0$. It follows from the dual result (Lemma~\ref{lem:tentdual}) that when $q\geq q_0$,
\begin{eqnarray*}
	\|\mathfrak{a}_j^k\|_{T_{2,w}^q(X)} =\sup_{\|b_j^k\|_{T_{2,w^{-1/(q-1)}}^{q'}}\leq 1} \left|  \iint_{\Delta_j^k} \mathfrak{a}_j^k(y,t)  b_j^k(y,t)\,d\mu(y)\frac{dt}{t}  \right|.
\end{eqnarray*}
From the definition of $\Delta_j^k$ in \eqref{e3.2} and the relationship \eqref{eqn:Omegakc}, one may apply Lemma~\ref{tent space} to obtain
 \begin{eqnarray*}
 	& &\left|\iint_{\Delta_j^k} \mathfrak{a}_j^k(y,t)  b_j^k(y,t)\,d\mu(y)\frac{dt}{t}\right| \\
 	&\leq& C_{\gamma, 1/2,n}\int_{\Omega_{k+1}^c} \iint_{\Gamma(x)} |\mathfrak{a}_j^k(y,t)| \, |b_j^k(y,t)|\, \frac{d\mu(y)}{V(x,t)}\frac{dt}{t} \,d\mu(x)\\
 	&=& C_{\gamma, 1/2,n}\int_{\Omega_{k+1}^c\cap B_j^k} \iint_{\Gamma(x)} |\mathfrak{a}_j^k(y,t)| \, |b_j^k(y,t)|\, \frac{d\mu(y)}{V(x,t)}\frac{dt}{t} \,d\mu(x)\\
 	&\leq& C_{\gamma, 1/2,n}\int_{\Omega_{k+1}^c\cap B_j^k} \left(\iint_{\Gamma(x)} |\mathfrak{a}_j^k(y,t)|^2\, \frac{d\mu(y)}{V(x,t)}\frac{dt}{t} \right)^{1/2} \left(\iint_{\Gamma(x)} |b_j^k(y,t)|^2\, \frac{d\mu(y)}{V(x,t)}\frac{dt}{t} \right)^{1/2} d\mu(x).
 \end{eqnarray*}
By the definition of $\Omega_{k+1}$ in  \eqref{eqn:Omegak}, we have
$$
     \left(\iint_{\Gamma(x)} |\mathfrak{a}_j^k(y,t)|^2\, \frac{d\mu(y)}{V(x,t)}\frac{dt}{t} \right)^{1/2}\leq \frac{2^k}{ C_{\gamma, 1/2,n} \,\lambda_j^k},\  \    \   {\rm for\  all } \  \, x\in \Omega_{k+1}^c.
$$
Then
\begin{eqnarray*}
	\left|\iint \mathfrak{a}_j^k(y,t)  b_j^k(y,t)\,d\mu(y)\frac{dt}{t}\right|  &\leq& \frac{2^k}{\lambda_j^k} \int_{B_j^k} \left(\iint_{\Gamma(x)} |b_j^k(y,t)|^2\, \frac{d\mu(y)}{V(x,t)}\frac{dt}{t} \right)^{1/2} d\mu(x)\\
	&=&\frac{2^k}{\lambda_j^k} \int_{B_j^k} w(x)^{1/q} \mathcal{A}(b_j^k)(x) w(x)^{-1/q}\,d\mu(x)\\
	&\leq & \frac{2^k}{\lambda_j^k} w(B_j^k)^{1/q} \left(\int \big(\mathcal{A}(b_j^k(x))\big)^{q'} w^{-1/(q-1)}  \, d\mu(x)\right)^{1/{q'}}\\
	&\leq& \frac{2^k}{\lambda_j^k}w(B_j^k)^{1/q} \leq w(B_j^k)^{1/q-1/p},
\end{eqnarray*}
as desired.  That is, every $\mathfrak{a}_j^k$ is a q-atom of  $T_{2,w}^p(X)$.

Furthermore, noticing that for every $j,k\in \mathbb{Z}$,
$$
     |\lambda_j^k|^p=2^{kp}w(B_j^k)\leq C 2^{kp} w(Q_j^k)
$$
follows from Lemma~\ref{lee2.1} and the construction of $B_j^k$, where  $C$ depends on $w$ and the parameter $C_1$ in \eqref{eqn:C1-1}. Therefore,
\begin{eqnarray*}
	\sum_k \sum_j |\lambda_j^k|^p&\leq & C  \sum_k \sum_j 2^{kp}w(Q_j^k)\leq C \sum_k 2^{kp} w(\Omega_k^*).
\end{eqnarray*}
Recall that $w\in A_\infty$ implies that   $w\in A_{q_0}$ for some $q_0>1$.  By using the fact that $\mathcal{M}$ is bounded on $L_w^{q_0}(X)$, then we  have
\begin{align}
w(\Omega_k^*)=w(\{x\in X: {\mathcal M}(\chi_{\Omega_k})(x)> 1-\gamma  \})\leq C(q_0,\gamma) w(\Omega_k).
\end{align}

Therefore, we can obtain
\begin{eqnarray*}
	\sum_k \sum_j |\lambda_j^k|^p &\leq& C\sum_k 2^{kp} w\left(\left\{x: \mathcal{A}(F)(x)>2^k/C_{\gamma,1/2,n}\right\}\right)\\
	&\leq& C p\sum_{k} \int_{2^{k-1}/C_{\gamma,1/2,n}}^{2^k/C_{\gamma,1/2,n}}   \lambda^{p-1} w\left(\left\{x: \mathcal{A}(F)(x)>\lambda\right\}\right)d\lambda\\
	&\leq& C\|F\|_{T_{2,w}^p(X)}^p.
\end{eqnarray*}
We complete the proof of Theorem~\ref{thm:tentatom}.
\hfill $\Box$

\smallskip

\begin{remark}\label{rem:convergence}
Let $0<p\leq 1$ and $w\in A_\infty$. In the part (i) of Theorem~\ref{thm:tentatom}, we have shown the for any $F\in T_{2,w}^p(X)$, the atomic decomposition  $F=\sum_{j,k\in \mathbb{Z}} \lambda_j^k \mathfrak{a}_j^k$ holds in   almost everywhere.  One may apply Lebesgue's dominated convergence theorem  to see $F=\sum_{j,k\in \mathbb{Z}} \lambda_j^k \mathfrak{a}_j^k$ in $T_{2,w}^p(X)$. Moreover, if $F\in T_{2,w}^p(X)\cap T_2^2(X)$, where the unweighted tent space $T_2^2(X)$ is introduced in Definition~\ref{def:unweightedTent}. We can apply the argument in Proposition~4.10 in \cite{HLMMY} to obtain $F=\sum_{j,k\in \mathbb{Z}} \lambda_j^k \mathfrak{a}_j^k$ converges also in $T_2^2(X)$. Summarily, if $F\in T_{2,w}^p(X)\cap T_2^2(X)$, $0<p\leq 1$ and $w\in A_\infty$, then the atomic decomposition $F=\sum_{j,k\in \mathbb{Z}} \lambda_j^k \mathfrak{a}_j^k$, established in (i) of Theorem~\ref{thm:tentatom}, holds in $T_{2,w}^p(X)$, $T_2^2(X)$ and almost everywhere.
\end{remark}

\bigskip

\section{ Applications: weighted Hardy spaces associated to nonnegative  self-adjoint operators and atomic characterization }
\setcounter{equation}{0}

In this section,  we will apply Theorem \ref{thm:tentatom} to show  an atomic decomposition of weighted Hardy spaces $H_{L,w}^p(X)$ associated to $L$, $0<p\leq 1$.

\subsection{Assumption (H)}

The following will be assumed throughout this section:  assume that operator $L$ is a non-negative self-adjoint operator on $L^2(X,d\mu)$ and that the semigroup $e^{-tL}$, generated by $-L$ on $L^2(X,d\mu)$, has the kernel $p_{t}(x,y)$ which  satisfies the following  Gaussian upper bound. That is, there exist constants $C, c>0$ such that
$$
    |p_{t}(x,y)| \leq \frac{C}{V(x,\sqrt{t})} \exp\Big(-{d(x,y)^2\over c\,t}\Big)
    \leqno{(GE)}
$$
for all $t>0$ and $x,\, y\in X$.

We note that such estimates are typical for elliptic or sub-elliptic differential operators of second order (see for instance, \cite{DOS}).

\smallskip

\subsection{Finite speed propagation for the wave equation}
Let $L$ be an operator satisfying the assumption ${\bf (H)}$, and $E_L(\lambda)$ denotes its spectral decomposition, then for every bounded Borel function $F:[0,\infty)\to{\Bbb C}$, one defines the operator $F(L): L^2(X)\to L^2(X)$ by the formula
\begin{equation}\label{e2.1}
    F(L):=\int_0^{\infty}F(\lambda)\,dE_L(\lambda).
\end{equation}
 In particular, the  operator $ \cos(t\sqrt{L})$  is then well-defined  on $L^2(X)$. Moreover, it follows from Theorem 3 of \cite{CS} that there exists a constant $c_0$ such that  the Schwartz kernel $K_{\cos(t\sqrt{L})}(x,y)$ of $\cos(t\sqrt{L})$ satisfies
\begin{equation}\label{e2.2}
     {\rm supp} K_{\cos(t\sqrt{L})}(x,y)\subseteq  \big\{(x,y)\in X\times X: \, d(x,y)\leq   c_0t\big\}.
 \end{equation}
 By the Fourier inversion formula, whenever $F$ is an even bounded Borel function with $\hat{F} \in L^1(\mathbb{R})$, we can  write $F(\sqrt{L})$ in terms of $\cos(t\sqrt{L})$. Concretely, by recalling (\ref{e2.1}) we have
$$
    F(\sqrt{L})=(2\pi)^{-1}\int_{-\infty}^{\infty}{\hat F}(t)\cos(t\sqrt{L})\,dt,
$$
which, when combined with (\ref{e2.2}), gives
\begin{equation}\label{e2.3}
    K_{F(\sqrt{L})}(x,y)=(2\pi)^{-1}\int_{|t|\geq  c_0^{-1}d(x,y)}{\hat F}(t) K_{\cos(t\sqrt{L})}(x,y)\,dt.
\end{equation}

\begin{lemma}\label{le2.5}
Let $\varphi\in C^{\infty}_0(\mathbb R)$ be even, ${\rm supp}\,\varphi \subseteq [-c_0^{-1}, c_0^{-1}]$. Let $\Phi$ denote the Fourier transform of $\varphi$. Then for each $k=0,1,\cdots$, and for every $t>0$, the kernel
$K_{(t^2L)^{k}\Phi(t\sqrt{L})}(x,y)$ of $(t^2L)^{k}\Phi(t\sqrt{L})$ satisfies
\begin{equation}\label{e2.4}
\hspace{0.5cm}
    {\rm supp}\ \!  K_{(t^2L)^{k}\Phi(t\sqrt{L})}  \subseteq \big\{(x,y)\in X\times X: \, d(x,y)\leq t\big\}
\end{equation}
 and
\begin{equation}\label{e2.5}
    |K_{(t^2L)^{k}\Phi(t\sqrt{L})}(x,y)| \leq \frac{C_k}{V(x,t)},
\end{equation}
for all $t>0$ and $x,y\in X$.
\end{lemma}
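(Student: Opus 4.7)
My plan is to prove the two conclusions separately: \eqref{e2.4} follows from the finite speed propagation \eqref{e2.2} via the Fourier inversion identity \eqref{e2.3}, while \eqref{e2.5} is a spectral multiplier estimate powered by the Gaussian bound (GE).

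For \eqref{e2.4}, set $F(\lambda):=\lambda^{2k}\Phi(\lambda)$, so that $(t^2L)^k\Phi(t\sqrt{L})=F(t\sqrt{L})$ and $F$ is even (since $\varphi$ even forces $\Phi$ even). Because $\Phi=\hat\varphi$, the standard intertwining of multiplication and differentiation under the Fourier transform makes $\hat F$ a constant multiple of $\varphi^{(2k)}$, hence supported in $[-c_0^{-1},c_0^{-1}]$. Rescaling, $\widehat{F(t\,\cdot)}(s)=t^{-1}\hat F(s/t)$ is supported in $[-c_0^{-1}t,c_0^{-1}t]$, and plugging into \eqref{e2.3} gives
\[
K_{F(t\sqrt{L})}(x,y)=(2\pi)^{-1}\int_{|s|\geq c_0^{-1}d(x,y)}\widehat{F(t\,\cdot)}(s)\,K_{\cos(s\sqrt{L})}(x,y)\,ds.
\]
The integrand vanishes identically unless the two support constraints $|s|\leq c_0^{-1}t$ and $|s|\geq c_0^{-1}d(x,y)$ are simultaneously realisable, which forces $d(x,y)\leq t$.

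For \eqref{e2.5}, by \eqref{e2.4} the claim is trivial when $d(x,y)>t$, so I only need the bound on $\{d(x,y)\leq t\}$, where doubling gives $V(x,t)\sim V(y,t)$. Since $\Phi$ is Schwartz, $(t\mu)^{2k}\Phi(t\mu)(1+(t\mu)^2)^{M}$ is uniformly bounded in $(\mu,t)$ for every integer $M$, so functional calculus furnishes a factorization
\[
(t^2L)^k\Phi(t\sqrt{L})=G_{M}(t\sqrt{L})\,(I+t^2L)^{-M}
\]
with $\|G_M(t\sqrt{L})\|_{L^2\to L^2}$ uniformly bounded. The resolvent factor is represented by subordination,
\[
(I+t^2L)^{-M}=\frac{1}{\Gamma(M)}\int_0^\infty r^{M-1}e^{-r}\,e^{-rt^2 L}\,dr,
\]
and (GE) together with \eqref{eqn:doubling2} translates this, for $M$ large enough, into a pointwise estimate $|K_{(I+t^2L)^{-M}}(x,z)|\leq C_M/V(x,t)$ with rapid off-diagonal decay. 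Combining this pointwise bound with the $L^2$ boundedness of $G_M(t\sqrt{L})$ through a Cauchy--Schwarz / Davies--Gaffney argument, and invoking the compact support \eqref{e2.4} to discard off-diagonal factors, produces $|K_{(t^2L)^k\Phi(t\sqrt{L})}(x,y)|\leq C_k/V(x,t)$.

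The main obstacle is precisely this last step: converting the $L^2$-operator bound on $G_M(t\sqrt{L})$ into a pointwise kernel bound on the product $G_M(t\sqrt{L})\,(I+t^2L)^{-M}$. Spectral calculus alone yields only $L^2\to L^2$ estimates; the pointwise conclusion genuinely requires the Gaussian input (GE), which enters through the off-diagonal $L^2$-bounds on $e^{-rt^2L}$. Property \eqref{e2.4} plays an essential role in the last step, since it allows one to ignore the off-diagonal tails of the resolvent kernel and retain only the on-diagonal $V(x,t)^{-1}$ contribution demanded by \eqref{e2.5}.
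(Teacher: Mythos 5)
The paper offers no proof of this lemma---it simply cites Lemma~3.5 of \cite{HLMMY} and Lemma~2.1 of \cite{GY}---so your argument must be judged on its own. Your treatment of the support property \eqref{e2.4} is correct and standard: $\lambda^{2k}\Phi(\lambda)$ is, up to sign, the Fourier transform of $\varphi^{(2k)}$, which is still supported in $[-c_0^{-1},c_0^{-1}]$, and after rescaling the two support constraints in the cosine-transform representation of $F(t\sqrt L)$ are incompatible unless $d(x,y)\leq t$.

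The pointwise bound \eqref{e2.5} is where there is a genuine gap, which you flag as ``the main obstacle'' but do not close. From the one-sided factorization $(t^2L)^k\Phi(t\sqrt L)=G_M(t\sqrt L)(I+t^2L)^{-M}$ you know only that $G_M(t\sqrt L)$ is bounded on $L^2$, so for fixed $y$ the function $K_{(t^2L)^k\Phi(t\sqrt L)}(\cdot,y)=G_M(t\sqrt L)\bigl[K_{(I+t^2L)^{-M}}(\cdot,y)\bigr]$ is controlled only in $L^2(X)$ (with norm $\lesssim V(y,t)^{-1/2}$); no Cauchy--Schwarz or off-diagonal manipulation can evaluate an $L^2$ function at the single point $x$, and the support property \eqref{e2.4} constrains where the kernel lives, not how large it is. The repair is to smooth on \emph{both} sides: write
\[
(t^2L)^k\Phi(t\sqrt L)=(I+t^2L)^{-M}\,H(t\sqrt L)\,(I+t^2L)^{-M},
\qquad H(\lambda):=\lambda^{2k}\Phi(\lambda)(1+\lambda^2)^{2M},
\]
with $\|H\|_\infty\leq C_{k,M}$ since $\Phi$ is Schwartz. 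Then
\[
\bigl|K_{(t^2L)^k\Phi(t\sqrt L)}(x,y)\bigr|
\leq \|H\|_{\infty}\,
\bigl\|K_{(I+t^2L)^{-M}}(x,\cdot)\bigr\|_{L^2(X)}\,
\bigl\|K_{(I+t^2L)^{-M}}(\cdot,y)\bigr\|_{L^2(X)}
\leq \frac{C_{k,M}}{V(x,t)^{1/2}V(y,t)^{1/2}},
\]
by Cauchy--Schwarz applied to the pairing of the two resolvent kernels through the $L^2$-bounded operator $H(t\sqrt L)$; the $L^2$ bounds on the resolvent kernels follow from your subordination formula together with (GE) and \eqref{eqn:doubling2}. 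Since the kernel vanishes for $d(x,y)>t$ by \eqref{e2.4}, and $V(y,t)\simeq V(x,t)$ when $d(x,y)\leq t$ by \eqref{eqn:doubling3}, this yields \eqref{e2.5}. With this two-sided sandwich in place of your single resolvent factor, the argument is complete.
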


\begin{proof}
We refer the reader to  Lemma~3.5 of \cite{HLMMY}. See also  Lemma~2.1 of \cite{GY}.
\end{proof}

\begin{lemma}\label{le2.6}
   For every $k=0,1,\cdots$, there exist two positive constants $C_k, c_k$ such that the kernel $p_{t, k}(x,y)$ of the operator $ (t^2L)^{k} e^{-t^2{L}}$ satisfies
\begin{equation}\label{pt}
    |p_{t, k}(x,y)|\leq    \frac{C_k}{V(x,t)}\exp\Big(-{d(x,y)^2\over c_kt^2}\Big)
\end{equation}

\noindent for all $t>0$ and almost every $x,y\in X$.
\end{lemma}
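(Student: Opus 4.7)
The plan is to read off these bounds from the Gaussian estimate (GE) on $p_t$ via a Cauchy integral argument in the complex time variable. By the spectral theorem for the nonnegative self-adjoint $L$, the function $\lambda \mapsto \lambda^k e^{-s\lambda}$ equals $(-1)^k \partial_s^k e^{-s\lambda}$, whence the operator identity
$$
   (t^2 L)^k e^{-t^2 L}=(-t^2)^k \,\frac{d^k}{ds^k}\, e^{-sL}\Big|_{s=t^2}
$$
holds, and likewise at the level of kernels once one justifies differentiating under the integral; this will come for free from the analyticity of $z\mapsto e^{-zL}$ on the right half-plane $\{\mathrm{Re}\,z>0\}$.

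The first substantive step is to promote (GE) to a complex-time Gaussian bound: for each $\theta\in (0,\pi/2)$ there exist $C_\theta,c_\theta>0$ such that the kernel $K_z(x,y)$ of $e^{-zL}$ satisfies
$$
   |K_z(x,y)|\leq \frac{C_\theta}{V(x,|z|^{1/2})}\exp\!\Big(\!-\tfrac{d(x,y)^2}{c_\theta\,|z|}\Big), \qquad |\arg z|\leq \theta.
$$
This is classical for nonnegative self-adjoint $L$: one conjugates $L$ by an exponential weight $e^{\varphi}$ with $\varphi$ Lipschitz, uses (GE) to get an $L^2\!\to\!L^\infty$ estimate with Gaussian factor, and then extends in $z$ by a Phragm\'en--Lindel\"of/analyticity argument (see, e.g., the monographs of Davies or Ouhabaz). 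Then, for each fixed $x,y\in X$, Cauchy's formula on the circle $\Gamma:=\{|\zeta-t^2|=t^2/2\}$, which sits inside the sector $|\arg\zeta|\leq \pi/3$ and has $|\zeta|\in[t^2/2,3t^2/2]$, gives
$$
   \frac{d^k}{ds^k}\,p_s(x,y)\Big|_{s=t^2}=\frac{k!}{2\pi i}\oint_{\Gamma}\frac{K_\zeta(x,y)}{(\zeta-t^2)^{k+1}}\,d\zeta.
$$
Substituting the complex-time bound, using the doubling property \eqref{eqn:doubling} to replace $V(x,|\zeta|^{1/2})$ by $V(x,t)$ up to a constant, and estimating the contour integral yields
$$
   \Big|\partial_s^k p_s(x,y)\big|_{s=t^2}\Big|\leq \frac{C_k}{t^{2k}\,V(x,t)}\exp\!\Big(\!-\tfrac{d(x,y)^2}{c_k t^2}\Big).
$$
Multiplying by $t^{2k}$ then gives \eqref{pt}.

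The main obstacle is the complex-time Gaussian estimate on $K_z(x,y)$; once this is in hand the rest is routine Cauchy-integral bookkeeping. If one prefers to avoid complex times entirely, an alternative is to invoke the finite speed of propagation (Lemma~\ref{le2.5}) together with a suitable Fourier/spectral decomposition of the multiplier $\lambda\mapsto \lambda^k e^{-\lambda}$ into compactly supported pieces $\Phi(t\sqrt{L})$, and sum the resulting kernel estimates; but the Cauchy route above is shorter and is the standard one used in the literature the paper is drawing upon.
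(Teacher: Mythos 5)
The paper gives no proof of its own here---it simply cites Ouhabaz \cite[Theorem 6.17]{O}---and your argument (upgrade (GE) to a complex-time Gaussian bound on a sector via the Davies/Phragm\'en--Lindel\"of method, then apply Cauchy's formula on the circle $|\zeta-t^2|=t^2/2$ to control $\partial_s^k p_s$ at $s=t^2$) is precisely the standard proof of that cited result. The bookkeeping is right: on that contour $\mathrm{Re}\,\zeta\geq t^2/2$ and $|\zeta|\leq 3t^2/2$, so doubling lets you replace $V(x,|\zeta|^{1/2})$ by $V(x,t)$, and the factor $(t^2/2)^{-(k+1)}$ times the contour length yields the $t^{-2k}$ that cancels against $(t^2)^k$; the proposal is correct and takes essentially the intended route.
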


\begin{proof} For the proof, we refer the reader to
\cite[Theorem 6.17]{O}.
\end{proof}

\subsection {Spectral theory and Littlewood-Paley function associated to $L$}
For $s>0$, we define
$$
    {\Bbb F}(s):=\Big\{\psi:{\Bbb C}\to{\Bbb C}\ {\rm measurable}: \ \  |\psi(z)|\leq C {|z|^s\over ({1+|z|^{2s}})}\Big\}.
$$
Then for any non-zero function $\psi\in {\Bbb F}(s)$, we have that $\{\int_0^{\infty}|{\psi}(t)|^2\frac{dt}{t}\}^{1/2}<\infty$. Denote  $\psi_t(z):=\psi(tz)$ for $t>0$. It follows from the spectral theory in \cite{Y} that for any $f\in L^2(X)$,
\begin{eqnarray}\label{e3.13}\label{e3.13}
    \Big\{\int_0^{\infty}\|\psi(t\sqrt{L})f\|_{L^2(X)}^2\,{dt\over t}\Big\}^{1/2} &=&\Big\{\int_0^{\infty}\big\langle\,\overline{\psi}(t\sqrt{L})\, \psi(t\sqrt{L})f, f\big\rangle\, {dt\over t}\Big\}^{1/2}\nonumber\\
    &=&\Big\{\big\langle \int_0^{\infty}|\psi|^2(t\sqrt{L}) \,{dt\over t}f, f\big\rangle\Big\}^{1/2}\nonumber\\
    &\leq& \kappa \|f\|_{L^2(X)},
\end{eqnarray}
where $\kappa=\big\{\int_0^{\infty}|{\psi}(t)|^2\, {dt/t}\big\}^{1/2}$. The estimate  will be often used in this article.

Now we give the definition of Littlewood-Paley area function associated  to $L$. Given a function $f\in L^2(X)$, consider the square function associated to  the heat semigroup generated by the  operator $L$,
$$
    S_{L}(f)(x):=\Big(\iint_{  \Gamma(x)}\big|t^2Le^{-t^2{L}}f(y)\big|^2\,\frac{d\mu(y)}{V(x,t)}\frac{dt}{t}\Big)^{1/2}, \  \ \ \ \ x\in X.
$$
Similarly, we define $g^*_{\nu,\Psi}$  function associated to $L$.
\begin{equation}\label{s}
    g^*_{\nu,\Psi}(f)(x):=\bigg(\iint_{X\times (0,+\infty)}\Big(\frac{t}{t+d(x,y)}\Big)^{n\nu}\big|\Psi(t\sqrt{L})f(y)\big|^2\  \frac{d\mu(y)}{V(x,t)}\frac{dt}{t} \bigg)^{1/2},~~~\nu>1,
\end{equation}
where $\varphi$ and $\Phi$ are as in Lemma \ref{le2.5}, $\Psi(x):=x^{2\alpha}\Phi^3(x)$, $x\in \mathbb{R}$, $\alpha\geq n+1$ and $n$ is the parameter in \eqref{eqn:doubling2}.

\begin{lemma}\label{sfunction}
Assume $\nu>3$. Then for all $w\in A_s,\ 1<s<\infty$, there exists a constant $C=C(s,n,w)>0$  such that for  the following estimate holds:
$$
    \big\|g^*_{\nu,\Psi}(f)\big\|_{L^s_w({X})}+\big\|S_L(f)\big\|_{L^s_w({X})}\leq C\big\|f\big\|_{L^s_w({X})}.
$$
\end{lemma}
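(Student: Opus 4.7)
The plan is to prove both estimates by weighted Calderón–Zygmund methods for operators with Gaussian heat kernel bounds. The unweighted $L^2$ bounds for $S_L$ and for the narrow-cone area function $S_{\Psi,1}$ (the version of $g^*_{\nu,\Psi}$ restricted to $\Gamma(x)$) are immediate from the spectral identity \eqref{e3.13}, applied to $z\mapsto z^2 e^{-z^2}$ and $\Psi$ respectively, together with the elementary Fubini bound $\int_{d(x,y)<t}\frac{d\mu(x)}{V(x,t)}\lesssim 1$.

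For the weighted bound on $S_L$ with $w\in A_s$, the key input is the Gaussian estimate of Lemma \ref{le2.6} on the kernel $p_{t,1}$ of $t^2Le^{-t^2L}$. From this I would derive, by the classical Fefferman–Stein argument adapted to spaces of homogeneous type (splitting $f=f\chi_{2B}+f\chi_{(2B)^c}$ on each ball, using Gaussian off-diagonal decay on the far piece and the unweighted $L^2$ bound on the local piece), the sharp-maximal estimate
$$
(S_L f)^{\#}(x)\ \lesssim\ \mathcal{M}(|f|^{s_0})(x)^{1/s_0}
$$
for some $s_0<s$ chosen so that $w\in A_{s/s_0}$, possible since $w\in A_s\subset A_\infty$. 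The weighted boundedness of the Hardy–Littlewood maximal function on $L^{s/s_0}_w$ (Lemma \ref{lee2.1} together with standard Muckenhoupt theory) then yields $\|S_L f\|_{L^s_w}\lesssim \|f\|_{L^s_w}$. The same scheme delivers the weighted bound for $S_{\Psi,1}$, since by Lemma \ref{le2.5} the kernel of $\Phi^3(t\sqrt L)$ has compact off-diagonal support in $\{d(x,y)\lesssim t\}$, which combined with the factor $(t\sqrt L)^{2\alpha}$ via functional calculus gives Gaussian-type bounds for the kernel of $\Psi(t\sqrt L)$.

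For $g^*_{\nu,\Psi}$, I would split $X\times(0,\infty)$ into $\Gamma(x)$ and the annular regions $A_k(x):=\{(y,t):2^kt\leq d(x,y)<2^{k+1}t\}$, $k\geq 0$. On $A_k(x)$ the aperture factor is $\lesssim 2^{-kn\nu}$, while by the doubling property \eqref{eqn:doubling2} one has $V(x,t)^{-1}\lesssim 2^{(k+1)n}V(x,2^{k+1}t)^{-1}$, producing the pointwise reduction
$$
g^*_{\nu,\Psi}(f)(x)^2\ \lesssim\ \sum_{k\geq 0} 2^{-kn(\nu-1)}\,S_{\Psi,2^{k+1}}(f)(x)^2,
$$
where $S_{\Psi,\alpha}$ denotes the $\Psi$-area function over the wider cone $\Gamma_\alpha(x)$. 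A weighted change-of-aperture estimate $\|S_{\Psi,\alpha} f\|_{L^s_w}\lesssim \alpha^{N}\|S_{\Psi,1}f\|_{L^s_w}$ with $N=N(s,w,n)$, obtained via the standard $A_\infty$ comparison of level sets of the conical and vertical maximal operators, then reduces matters to the bound on $S_{\Psi,1}$ already proved in the previous paragraph. Summing the resulting series converges once $\nu>3$, which with room to spare controls the exponential growth factor $2^{2kN}$.

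The principal technical obstacle is the sharp-maximal estimate for $S_L$ and $S_{\Psi,1}$: it requires the careful ball-decomposition argument with Gaussian off-diagonal bounds that is by now standard in the theory of Hardy spaces associated to operators (see \cite{HLMMY,SY}). The change-of-aperture lemma is secondary but must be proved with polynomial growth in $\alpha$, which also rests on the $A_\infty$ property of $w$ together with doubling; the homogeneous-space adaptation is routine given the dyadic framework of Theorem~\ref{thm:dyadic} and Lemma~\ref{lem:Whitney}.
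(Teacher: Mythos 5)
The paper itself gives no proof here: it simply cites Lemma~5.1 of \cite{GY} for the Euclidean case and asserts that the passage to $(X,d,\mu)$ is routine given Lemma~\ref{lem:Whitney} and the kernel estimates of Subsection~4.2. Your proposal is therefore a genuine proof attempt along a different (sharp-maximal plus change-of-aperture) route. The $S_L$ half is essentially sound, with one caveat: controlling the local piece $S_L(f\chi_{2B})$ by the unweighted $L^2$ bound yields $(S_Lf)^{\#}\lesssim \mathcal{M}(|f|^{2})^{1/2}$, i.e.\ $s_0=2$, which only covers $s>2$. To reach $s_0$ close to $1$, as you need for $1<s\le 2$, you must additionally invoke the weak-$(1,1)$ boundedness of $S_L$ (a consequence of the Gaussian bounds via vector-valued Calder\'on--Zygmund theory) together with Kolmogorov's inequality and the $\delta$-sharp function $M^{\#}_{\delta}$; your outline does not supply this ingredient.

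The genuine gap is in the $g^{*}_{\nu,\Psi}$ half. The annular reduction $g^{*}_{\nu,\Psi}(f)(x)^2\lesssim\sum_{k\ge 0}2^{-kn(\nu-1)}S_{\Psi,2^{k+1}}(f)(x)^2$ is fine, but closing the argument requires $\sum_{k}2^{-kn(\nu-1)/2}\,2^{kN}<\infty$, where $N=N(s,w,n)$ is the exponent in the weighted change-of-aperture inequality. You assert that $\nu>3$ gives convergence ``with room to spare,'' but $N$ is not bounded uniformly in $s$ and $w$: the weighted aperture comparison goes through a level-set estimate in which Lemma~\ref{lee2.1}(iv) forces an extra power of the $A_r$ exponent of $w$, so for $w\in A_s$ one gets $N$ of order $ns\max(1/2,1/s)$ rather than the unweighted $n\max(1/2,1/s)$. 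In the unweighted case the threshold is exactly $\nu>3$ (which is presumably where the hypothesis originates), but in the weighted case the same bookkeeping demands roughly $\nu>1+s$ for $s>2$, and your series diverges for $\nu$ just above $3$ and $s$ large. The argument in \cite{GY} avoids this loss by exploiting that, by Lemma~\ref{le2.5}, the kernel of $\Psi(t\sqrt L)=(t^2L)^{\alpha}\Phi^3(t\sqrt L)$ is supported in $\{d(x,y)\le 3t\}$, so $\Psi(t\sqrt L)f(y)$ is genuinely local and the far-field contribution to $g^{*}_{\nu,\Psi}$ can be estimated directly over a Whitney-type decomposition instead of through a black-box aperture change. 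As written, your reduction to $S_{\Psi,1}$ does not close for general $w\in A_s$ with $s>2$; you would need either a weighted change-of-aperture estimate with exponent independent of $s$ (which I do not believe is available) or the direct localized argument.
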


\begin{proof}
When $X=\mathbb{R}^n$, this lemma has proved in Lemma~5.1 of \cite{GY}, based on  a dyadic  Whitney decomposition on $\mathbb{R}^n$ and the functional calculi theory for operators. Due to estimates in Subsection~4.2 and Lemma~\ref{lem:Whitney},  the proof of Lemma~\ref{sfunction} is very similar to that of Euclidean case,  and we skip it.
\end{proof}

\vskip 0.1cm

\subsection{Weighted Hardy spaces  and weighted atoms}

\begin{definition}\label{def:Hp by area function}
 Suppose $w\in A_{\infty}$ and $0<p\leq 1$. We may define the spaces $H_{L,w}^p(X)$ as the completion of $\{f \in L^2(X): \|S_{L}(f)\|_{L_w^p(X)}< \infty\}$ with respect to $L^p_w(X)$-norm of the square function,
$$
    \big\|f\big\|_{H_{L,w}^p(X)}:=\big\|S_L(f)\big\|_{L^p_w(X)}.
$$
\end{definition}

\noindent {\bf Note:}
 (1)\, Let $F(y,t)=t^2Le^{-t^2L}f(y)$, then $S_L(f)(x)=\mathcal{A}(F)(x)$ for every $x\in X$ and  $\|f\|_{H_{L,w}^p(X)}=\|F\|_{T_{2,w}^p(X)}$.

 (2)\, Denote the Laplacian by $\triangle=-\sum\limits_{i=1}^n\partial^2_{x_i}$ and  $e^{-t {\triangle}}$  the heat semigroup on ${\mathbb R}^n$, given by
$$
    e^{-t\Delta}f(x)=\int_{{\mathbb R}^n}p_t(x-y)f(y)dy,\ \ t>0,
$$
where
$$
    p_t(x-y)={1\over (4\pi t)^{n/2}}\exp\Big(-{|x-y|^2\over 4t}\Big).
$$
 It is known that the  classical weighted  space $H^1_w({\mathbb R}^n)$ coincides with the space $H^1_{\triangle, w}({\mathbb R}^n)$  and their norms are equivalent. See \cite{G,ST}.

(3)\, When  $w= 1$,  the theory of Hardy spaces associated to operators was studied in \cite{ADM,AMR,DY2,HLMMY,HM,JY}.

\medskip

We next define  a $(p,q,M,w)$-atom  associated to the operator $L$.

\begin{definition}\label{da}
 Suppose that $M\in \mathbb{N}$, $w\in A_s ,1\leq s<\infty$ and $0<p\leq 1$. A function $a(x)\in L^2(X)$ is called a $(p,q,M,w)$-atom associated to  an operator $L$,   $1<q<\infty$, if there exist a function $b\in {\mathcal D}(L^M)$, the domain of $L^M$,  and a ball $B\subseteq X$ such that

\begin{enumerate}[\rm (i)]
	\item \ $a=L^M b$;
	
	\medskip
	
	\item  \ {\rm supp}\  $L^{k}b\subseteq B, \ k=0, 1,\cdots, M$;

    \medskip

   \item \ $\big\|(r_B^2L)^{k}b\big\|_{L_w^q(X)}\leq r_B^{2M} w(B)^{1/q-1/p},\ k=0,1, \cdots, M$.
\end{enumerate}
\end{definition}

\begin{remark}\label{re3.2}
When  $w=1$ and $p=1$,   a theory of Hardy space $H^1_{L}(X)$ associated to $L$  was  presented  in \cite{HLMMY}, including an atomic (or molecular) decomposition, square function characterization and duality of Hardy and BMO spaces.
\end{remark}

\begin{remark}\label{atom q1>q2}
It follows directly from H\"older's inequality that a $(p,q_1,M,w)$-atom is also a $(p,q_2,M,w)$-atom whenever $q_1>q_2$.
\end{remark}

\smallskip

\begin{definition} \label{def:Hatom}
Let $M$, $w$, $p$ and $q$ be the same as above. The weighted atomic Hardy space ${H}^{p,q,M}_{L,w}(X)$ is defined as follows. We will say that $f= \sum\lambda_j a_j$ is an atomic $(p,q,M,w)$-representation (of $f$) if $
\{\lambda_j\}_{j=0}^{\infty}\in {\ell}^p$, each $a_j$ is a $(p,q,M,w)$-atom, and the sum converges in $L^2(X).$  Set
$$
    \mathbb {H}^{p,q,M}_{L,w}(X):=\Big\{f:  f \mbox{  has an atomic $(p,q,M,w)$-representation} \Big\},
$$
with the norm $\big\|f\big\|_{\mathbb {H}^{p,q,M}_{L,w}(X)}$ given by
$$
    \inf\Big\{\Big(\sum_{j=0}^{\infty}|\lambda_j|^p\Big)^{1/p}: \  \  f=\sum\limits_{j=0}^{\infty}\lambda_ja_j \ \mbox{ is an atomic $(p,q,M,w)$-representation}\Big\}.
$$

\smallskip

The space ${H}^{p,q,M}_{L,w}(X)$ is then defined as the completion of $\mathbb {H}^{p,q,M}_{L,w}(X)$ with respect to this norm.
 \end{definition}

\subsection{Atomic characterization of weighted Hardy spaces }
The aim of this part is to prove Theorem \ref{thm:Hardyatom}, the second main result of this paper. By Definitions \ref{def:Hp by area function} and \ref{def:Hatom},  Theorem  \ref{thm:Hardyatom} will be a direct corollary if we have  proved the following Theorem \ref{th1.2}.

\begin{theorem}\label{th1.2}
Suppose that  $w\in A_s, 1<s<\infty$ and $0<p\leq 1<q<\infty$.

{\rm (i)} Let $M\in {\mathbb N}$. If $f\in H_{L,w}^p(X)\cap L^2(X)$, then there exist a family of $(p,q,M,w)$-atoms $\{ a_i\}_{i=0}^{\infty}$ and a sequence of numbers of $\{\lambda_i\}_{i=0}^{\infty}$ such that $f$ can be represented in the form $f=\sum_{i=0}^{\infty} \lambda_i a_i$, and the sum converges in the sense of $L^2(X)$-norm. Moreover,
$$
    \Big(\sum_{i=0}^{\infty}|\lambda_i|^p\Big)^{1/p}\leq C\big\|f\big\|_{H_{L,w}^p(X)}.
$$

{\rm (ii)} Suppose that $M\in {\mathbb N}$, $\displaystyle   M>\frac{(s-p)n}{2p}$ and $q\geq s$. Let $f=\sum_{i=0}^{\infty} \lambda_i a_i$, where $\{\lambda_i\}_{i=0}^{\infty}\in \ell^p$, the $a_i$ are  $(p,q,M,w)$-atoms, and  the sum converges in $L^2(X)$. Then $f\in H_{L,w}^p(X)\cap L^2(X)$ and

$$
    \Big\|\sum_{i=0}^{\infty} \lambda_i a_i\Big\|_{H_{L,w}^p(X)}\leq C\Big(\sum_{i=0}^{\infty} |\lambda_i|^p\Big)^{1/p}.
$$
\end{theorem}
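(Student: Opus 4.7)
The plan is to deduce (i) by combining the $q$-atomic decomposition of Theorem \ref{thm:tentatom} with an $L$-adapted Calder\'on reproducing formula built from the finite-speed-propagation function $\Phi$ of Lemma \ref{le2.5}, thereby transferring the tent-atomic structure of $F(y,t):=t^2 L e^{-t^2 L} f(y)$ into the Hardy-atomic structure of $f$. For (ii) I will use $p$-subadditivity to reduce to a uniform bound on $S_L$ applied to a single $(p,q,M,w)$-atom, then split $X$ into the bulk $4B$ (handled by Lemma \ref{sfunction}) and dyadic annuli (handled by the cancellation $a=L^M b$ together with the Gaussian bounds (\ref{pt})).

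\textbf{Proof of (i).} Since $\|F\|_{T_{2,w}^p(X)}=\|f\|_{H_{L,w}^p(X)}$ and $F\in T_2^2(X)$ by (\ref{e3.13}), Theorem \ref{thm:tentatom} and Remark \ref{rem:convergence} give $F=\sum_{j,k}\lambda_j^k\mathfrak{a}_j^k$, converging in $T_{2,w}^p(X)\cap T_2^2(X)$, with $\mathfrak{a}_j^k$ a $q$-atom supported in $T(B_j^k)$ and $(\sum|\lambda_j^k|^p)^{1/p}\lesssim\|f\|_{H_{L,w}^p(X)}$. Fixing $\Phi$ as in Lemma \ref{le2.5}, I choose a normalising constant $c_M$ so that spectral calculus yields
\[
f=c_M\int_0^\infty (t^2L)^M\Phi(t\sqrt{L})\bigl(t^2Le^{-t^2L}f\bigr)\,\frac{dt}{t}\quad\text{in }L^2(X).
\]
Substituting the tent decomposition (legitimate because $G\mapsto c_M\int_0^\infty(t^2L)^M\Phi(t\sqrt{L})G(\cdot,t)\,dt/t$ is $L^2$-bounded from $T_2^2(X)$ by spectral calculus) gives $f=\sum_{j,k}\lambda_j^k\alpha_j^k$ in $L^2(X)$, with $\alpha_j^k=L^M b_j^k$ and
\[
b_j^k:=c_M\int_0^\infty t^{2M}\Phi(t\sqrt{L})\mathfrak{a}_j^k(\cdot,t)\,\frac{dt}{t}.
\]
The support condition $\operatorname{supp}(L^i b_j^k)\subseteq 2B_j^k$ for $0\le i\le M$ follows from (\ref{e2.4}) and $\operatorname{supp}\mathfrak{a}_j^k(\cdot,t)\subseteq B_j^k$ for $t\le r_{B_j^k}$. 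For the size bound I test $(r_{B_j^k}^2L)^i b_j^k$ against $g$ with $\|g\|_{L_{w^{-1/(q-1)}}^{q'}(X)}=1$, writing
\[
\bigl\langle(r_{B_j^k}^2L)^i b_j^k,g\bigr\rangle=c_M r_{B_j^k}^{2i}\iint\mathfrak{a}_j^k(x,t)\cdot t^{2(M-i)}(t^2L)^i\Phi(t\sqrt{L})g(x)\,d\mu\frac{dt}{t}.
\]
By the tent-space duality of Lemma \ref{lem:tentdual} this is bounded by $r_{B_j^k}^{2i}\|\mathfrak{a}_j^k\|_{T_{2,w}^q(X)}$ times the $T_{2,w^{-1/(q-1)}}^{q'}$-norm of $t^{2(M-i)}(t^2L)^i\Phi(t\sqrt{L})g$. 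On $\operatorname{supp}\mathfrak{a}_j^k$ one has $t\le r_{B_j^k}$, allowing $t^{2(M-i)}$ to be pulled out as $r_{B_j^k}^{2(M-i)}$, and the weighted $g^*_{\nu,\Psi}$-bound of Lemma \ref{sfunction} applied to the dual weight $w^{-1/(q-1)}\in A_{q'}$ (Lemma \ref{lee2.1}(iii)) controls the remaining tent norm by $\|g\|_{L_{w^{-1/(q-1)}}^{q'}(X)}$. Hence $\|(r_{B_j^k}^2 L)^i b_j^k\|_{L_w^q(X)}\lesssim r_{B_j^k}^{2M}w(B_j^k)^{1/q-1/p}$, so that (after absorbing a fixed constant into $\lambda_j^k$) each $\alpha_j^k$ is a $(p,q,M,w)$-atom.

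\textbf{Proof of (ii).} $p$-subadditivity gives $\|S_L(f)\|_{L_w^p(X)}^p\le\sum_i|\lambda_i|^p\|S_L(a_i)\|_{L_w^p(X)}^p$, reducing the proof to $\|S_L(a)\|_{L_w^p(X)}\le C$ uniformly in $(p,q,M,w)$-atoms $a$ attached to a ball $B$. On $4B$, H\"older with exponents $q/p$ and $(q/p)'$, the $L_w^q$-boundedness of $S_L$ from Lemma \ref{sfunction} (valid since $q\ge s$), the atom bound $\|a\|_{L_w^q(X)}=\|L^M b\|_{L_w^q(X)}\le w(B)^{1/q-1/p}$, and doubling $w(4B)\lesssim w(B)$ yield $\int_{4B} S_L(a)^p w\,d\mu\lesssim 1$. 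For annuli $A_j:=2^{j+1}B\setminus 2^jB$ with $j\ge 2$, I rewrite $t^2 L e^{-t^2L} a=t^{-2M}(t^2L)^{M+1}e^{-t^2L}b$ and use the Gaussian bound (\ref{pt}) with $k=M+1$: for $(y,t)\in\Gamma(x)$ with $x\in A_j$ the distance $d(y,B)\gtrsim 2^j r_B$, and a H\"older estimate against the weight $w^{-1/(q-1)}$ inside the kernel integration (in the spirit of \cite{HLMMY,AD}) combined with the $t^{-2M}$ prefactor gives a pointwise bound of the form $S_L(a)(x)\lesssim 2^{-j(2M+n/2)}w(B)^{-1/p}$ up to tame factors. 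Taking the $p$-th power, integrating on $A_j$ against $w\,d\mu$, and applying $w(2^{j+1}B)\le C\,2^{jns}w(B)$ from Lemma \ref{lee2.1}(iv) yields $\int_{A_j}S_L(a)^p w\,d\mu\lesssim 2^{-j(2Mp-(s-p)n)}$; the geometric series converges precisely when $M>(s-p)n/(2p)$.

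\textbf{Main difficulty.} The technically delicate step of the plan is the size estimate for $b_j^k$ in part (i), which requires simultaneously (a) opening the $t$-integral via tent-space duality, (b) identifying $t^{2(M-i)}(t^2L)^i\Phi(t\sqrt{L})g$ as a tent function whose $T_{2,w^{-1/(q-1)}}^{q'}$-norm is controlled by the weighted $g^*_{\nu,\Psi}$-function from Lemma \ref{sfunction}, and (c) exploiting $t\le r_{B_j^k}$ on $\operatorname{supp}\mathfrak{a}_j^k$ to convert $t^{2(M-i)}$ into $r_{B_j^k}^{2(M-i)}$ without losing the atomic normalisation. In part (ii), the off-diagonal decay exponent must be compared precisely against the weight growth $w(2^{j+1}B)\lesssim 2^{jns}w(B)$, and this comparison is what forces the threshold $M>(s-p)n/(2p)$.
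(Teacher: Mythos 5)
Your overall architecture matches the paper's: part (i) feeds the tent-space decomposition of Theorem \ref{thm:tentatom} into an $L$-adapted Calder\'on reproducing formula and checks that the resulting projection sends tent atoms to $(p,q,M,w)$-atoms, and part (ii) reduces to a uniform bound on $S_L(a)$ split into a local piece and dyadic annuli. However, there is a genuine gap in your part (i): the reproducing kernel $(t^2L)^M\Phi(t\sqrt L)$ does not carry enough vanishing at the spectral origin for the size estimate on $b_j^k$ to go through. After you pull $t^{2(M-i)}$ out as $r_B^{2(M-i)}$, what you must control is the $T^{q'}_{2,w^{-1/(q-1)}}$-norm of $(t^2L)^{i}\Phi(t\sqrt L)g$. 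For $i=0$ this is the conical square function of $\Phi(t\sqrt L)g$, which is not even finite in general because $\Phi(0)=\int\varphi\neq 0$, so $|\Phi(t\sqrt L)g(y)|^2\,dt/t$ fails to be integrable near $t=0$; and for $1\le i\le n$ Lemma \ref{sfunction} does not apply, since the $g^*_{\nu,\Psi}$ bound is stated only for $\Psi(x)=x^{2\alpha}\Phi^3(x)$ with $\alpha\ge n+1$. This is precisely why the paper takes $\Psi(x)=x^{2(M+n+1)}\Phi^3(x)$ and sets $b=\int_0^\infty t^{2\alpha}L^{n+1}\Phi^3(t\sqrt L)(\mathfrak a(\cdot,t))\,dt/t$: then $(r_B^2L)^ib=r_B^{2M}\pi_{\Psi,L,i}\big(t^{2(M-i)}r_B^{-2(M-i)}\mathfrak a\big)$ with $\pi_{\Psi,L,i}$ built from $(t^2L)^{n+1+i}\Phi^3(t\sqrt L)$, so every $i$, including $i=0$, retains at least $n+1$ powers of $t^2L$ and Proposition \ref{prop:TentHardy}(i), hence Lemma \ref{sfunction}, applies. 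With $\Phi$ alone and only $M$ powers the argument breaks at $i=0$.

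Two smaller points in (ii). The pointwise inequality $|S_L(f)|\le\sum_i|\lambda_i|\,S_L(a_i)$, which your $p$-subadditivity step silently assumes, is not automatic for an infinite sum converging only in $L^2$; the paper isolates this as Lemma \ref{le4.1}, using the weak-$(2,2)$ bound for $S_L$. And on the annuli you must split the $t$-integral at $t=r_B$: the substitution $t^2Le^{-t^2L}a=t^{-2M}(t^2L)^{M+1}e^{-t^2L}b$ is only useful for $t\ge r_B$ (where $t^{-2M}\le r_B^{-2M}$), while for $t<r_B$ one must estimate with $a$ directly and harvest the decay $(t/d(x,x_B))^{N}$ from the Gaussian bound, choosing $(s-p)n/p<N<2M$. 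Your single-formula treatment hides this, and the claimed uniform pointwise bound $S_L(a)(x)\lesssim 2^{-j(2M+n/2)}w(B)^{-1/p}$ does not follow as stated.
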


\smallskip

To prove the part (i) of  Theorem~\ref{th1.2}, we will study the relationship between weighted tent spaces $T_{2,w}^p(X)$ and weighted Hardy spaces $H_{L,w}^p(X)$.

Let $\varphi$ and $\Phi$ be the same as in Lemma \ref{le2.5}, $\Psi(x):=x^{2\alpha}\Phi^3(x)$, $\alpha\geq n+1$, $M\in \mathbb{N}$ and $n$ is the parameter in \eqref{eqn:doubling2}. Consider the operator $\pi_{\Psi,L}$ initially defined on  the dense subspace of  $T_{2,w}^p(X) \ \   (0<p<\infty)$ of $F$ with compact support in $X\times (0,+\infty)$ by
\begin{equation}\label{e9.7}
\pi_{\Psi,L}(F)(x):= \int_0^{\infty}\Psi(t\sqrt L)\big(F(\cdot,\, t)\big)(x)\,{dt\over t}.
\end{equation}

\begin{prop}\label{prop:TentHardy}
Suppose  $w\in A_q$, where $q\in (1,\infty)$.
\begin{itemize}
\item [\rm(i)] $\pi_{\Psi,L}$ is bounded from $T_{2,w}^q(X)$ to $L_w^q(X)$.

\medskip

\item [\rm(ii)] Let $0<p\leq 1$ and $\alpha=M+n+1$. $\pi_{\Psi,L}$ maps every $q-$atom of $T_{2,w}^p(X)$, $\mathfrak{a}$, to a $(p,q,M,w)$-atom, up to multiplication by a harmless constant independent of $\mathfrak{a}$,
$$
    \|\pi_{\Psi,L}(\mathfrak{a})\|_{L_w^q(X)}\leq C \|\mathfrak{a}\|_{T_{2,w}^q(X)}.
$$
\end{itemize}
\end{prop}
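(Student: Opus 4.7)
For part (i), the strategy is a double duality argument. By Lemma~\ref{lem:tentdual} combined with the equivalence $w\in A_q\Leftrightarrow w^{1-q'}\in A_{q'}$ (Lemma~\ref{lee2.1}(iii)), it suffices to bound $|\langle \pi_{\Psi,L}(F),g\rangle|$ for $g$ in the unit ball of $L^{q'}_{w^{1-q'}}(X)$. Self-adjointness of $\Psi(t\sqrt L)$ converts this pairing into the tent-type integral $\iint F(y,t)\,\Psi(t\sqrt L)g(y)\,d\mu(y)\,\tfrac{dt}{t}$. Using the pointwise bound
\[
 \Bigl|\iint F G\,d\mu\,\tfrac{dt}{t}\Bigr|\le C\int_X \mathcal A(F)(x)\,\mathcal A(G)(x)\,d\mu(x)
\]
that appears at the start of the proof of Lemma~\ref{lem:tentdual}, H\"older's inequality on $L^q_w\times L^{q'}_{w^{1-q'}}$, and the pointwise domination $\mathcal A(\Psi(t\sqrt L)g)\le g^{*}_{\nu,\Psi}(g)$, the claim reduces to the weighted square-function bound of Lemma~\ref{sfunction} applied to the weight $w^{1-q'}\in A_{q'}$.

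For part (ii), with $\alpha=M+n+1$, factor $\Psi(t\sqrt L)=(t^2L)^M\widetilde\Psi(t\sqrt L)$ where $\widetilde\Psi(x):=x^{2(n+1)}\Phi^3(x)$, and define
\[
  b:=\int_0^{\infty}t^{2M}\,\widetilde\Psi(t\sqrt L)\bigl(\mathfrak a(\cdot,t)\bigr)\,\frac{dt}{t},
\]
so that $\pi_{\Psi,L}(\mathfrak a)=L^M b$. For each $0\le k\le M$, re-apportioning factors of $t$ yields
\[
  L^k b=\int_0^{\infty}t^{2(M-k)}\,\Psi_k(t\sqrt L)\bigl(\mathfrak a(\cdot,t)\bigr)\,\frac{dt}{t},\qquad \Psi_k(x):=x^{2(k+n+1)}\Phi^3(x).
\]

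The support and size conditions of Definition~\ref{da} then have to be verified. For the support, I use the finite-propagation formula~\eqref{e2.3}: since $\Phi^3$ is the Fourier transform of $\varphi\ast\varphi\ast\varphi$, which is supported in $[-3c_0^{-1},3c_0^{-1}]$, Lemma~\ref{le2.5}-type reasoning shows that the kernel of $\Psi_k(t\sqrt L)$ is supported in $\{(x,y):d(x,y)\le 3t\}$. Because $\mathfrak a\subseteq T(B)$ forces $t\le r_B$ and $\mathfrak a(\cdot,t)\subseteq B$, it follows that each $L^k b$ is supported in the fixed dilate $\widetilde B:=4B$. For the size estimate, I dualize once more: for $h$ with $\|h\|_{L^{q'}_{w^{1-q'}}}\le 1$, the bound $t\le r_B$ on the support of $\mathfrak a$, self-adjointness of $\Psi_k(t\sqrt L)$, and the tent-pairing bound yield
\[
 |\langle L^k b,h\rangle|\le r_B^{2(M-k)}\,C\int_X \mathcal A(\mathfrak a)(x)\,\mathcal A\bigl(\Psi_k(t\sqrt L)h\bigr)(x)\,d\mu(x)\le C r_B^{2(M-k)}\,\|\mathfrak a\|_{T_{2,w}^q}\,\|h\|_{L^{q'}_{w^{1-q'}}},
\]
the last inequality via Lemma~\ref{sfunction} applied to $\Psi_k$. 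Combined with $\|\mathfrak a\|_{T_{2,w}^q}\le w(B)^{1/q-1/p}$ and $w(\widetilde B)\asymp w(B)$ (Lemma~\ref{lee2.1}(iv)), this gives $\|(r_B^2 L)^k b\|_{L^q_w}\le Cr_B^{2M}w(\widetilde B)^{1/q-1/p}$, identifying $C^{-1}\pi_{\Psi,L}(\mathfrak a)$ as a $(p,q,M,w)$-atom; the case $k=M$ delivers the displayed $L^q_w$ bound in (ii), and $a=L^M b$ is immediate from the construction.

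The principal obstacle is that Lemma~\ref{sfunction} fixes one specific choice $\Psi(x)=x^{2\alpha}\Phi^3(x)$, whereas in part (ii) the argument requires the same bound uniformly over the family $\{\Psi_k\}_{k=0}^{M}$; each $\Psi_k$ has the identical structural form $x^{2\beta}\Phi^3(x)$ with $\beta=k+n+1\ge n+1$, so the proof of Lemma~\ref{sfunction} (relying on \cite{GY} together with the dyadic Whitney decomposition of Lemma~\ref{lem:Whitney}) carries over verbatim, but the adaptation must be made explicit. A secondary technical point is the rigorous interpretation of the Bochner-type integrals defining $b$ and $\pi_{\Psi,L}(\mathfrak a)$; I will handle this by working first on the dense subspace of $F$ (respectively $\mathfrak a$) with compact support in $X\times(0,+\infty)$, as already arranged in~\eqref{e9.7}, and then extending by the density argument based on Lemma~\ref{lem:L2 with compact}.
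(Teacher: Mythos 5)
Your proposal is correct and follows essentially the same route as the paper: part (i) by pairing against $g\in L^{q'}_{w^{1-q'}}$, converting to the tent pairing, and invoking Lemma~\ref{sfunction} for the dual weight $w^{1-q'}\in A_{q'}$; part (ii) by writing $\pi_{\Psi,L}(\mathfrak a)=L^Mb$ and estimating $(r_B^2L)^k b$ via the operators with symbol $x^{2(k+n+1)}\Phi^3(x)$ and the bound $t\le r_B$ on ${\rm supp}\,\mathfrak a$ (the paper phrases this as applying part (i) to $\pi_{\Psi,L,i}$ rather than re-dualizing, which is the same computation). The uniformity over $k$ that you flag as the principal obstacle is already covered, since Lemma~\ref{sfunction} is stated for every $\alpha\ge n+1$ in $\Psi(x)=x^{2\alpha}\Phi^3(x)$.
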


\noindent{\it Proof}.\
{\rm (i)}. We will use duality argument. For every $g\in L_{w^{-1/(q-1)}}^{q'}$ with $\|g\|_{L_{w^{-1/(q-1)}}^{q'}} \leq 1$, one can obtain
\begin{eqnarray*}
    \left|\int_X \pi_{\Psi,L}(F)(x)g(x)\,d\mu(x)\right| &=&\left|\iint_{X\times (0,+\infty)} F(y,t)\Psi(t\sqrt{L})g(y) \,d\mu(y)\frac{dt}{t}\right|\\
      &\leq& C_n \|F\|_{T_{2,w}^q(X)} \|\Psi(t\sqrt{L})g\|_{T_{2,w^{-1/(q-1)}}^{q'}(X)}\\
      &\leq& C_n \|F\|_{T_{2,w}^q(X)} \|g^*_{\nu,\Psi}(g)\|_{L_{w^{-1/(q-1)}}^{q'}(X)}\\
     &\leq& C_{n,q,w} \|F\|_{T_{2,w}^q(X)} \|g\|_{L_{w^{-1/(q-1)}}^{q'}(X)}\\
     &\leq & C_{n,q,w} \|F\|_{T_{2,w}^q(X)},
\end{eqnarray*}
where  in the last two  inequality above we have used Lemma~\ref{sfunction} and $w\in A_q$.

\smallskip

{\rm (ii).}  Let  $\mathfrak{a}$ be any  $q-$atom of $T_{2,w}^p(X)$. Then there exists a ball $B\subseteq X$  such that    ${\rm supp}\, \mathfrak{a}\subseteq T(B)$.
 Recall $\Psi(x)=x^{2\alpha}\Phi^3(x)$ and  $\alpha=M+n+1$ fixed. We can rewrite
$$
    \pi_{\Psi,L}(\mathfrak{a})=L^M b,  \   \   \text{where} \  \   b=  \int_0^\infty t^{2\alpha}L^{n+1} \Phi^3(t\sqrt{L}) (\mathfrak{a}(\cdot,t))\, \frac{{\rm d}t}{t}.
$$
From Lemma \ref{le2.5},  the integral kernel $K_{(t^2L)^i\Phi^3(t\sqrt{L})}(x,y)$ of the operator $(t^2L)^i\Phi^3(t\sqrt{L})$ satisfies
$$
    {\rm supp} \  K_{(t^2L)^i\Phi^3(t\sqrt{L})}(x,y) \subseteq \big\{(x,y)\in   X \times   X: d(x,y)\leq t\big\}.
$$
This, together with the fact for any  $(y,t)\in {\rm supp}\, \mathfrak{a}$, we have $y\in B$ and $t\leq r_B$, where $r_B$ denotes the radius of $B$, implies that for every $i=0, 1,\cdots, M$,
$$
    {\rm supp}  \big(L^i b\big) \subseteq 3B.
$$
To continue, for every $i=0,1,\cdots, M$,   we apply (i) of Proposition~\ref{prop:TentHardy} to obtain
$$
    \pi_{\Psi,L,i}(F)(x):=\int_0^\infty  (t^2L)^{n+1+i}\Phi^3(t\sqrt{L})   (F(\cdot, t))(x) \,\frac{dt}{t}
$$
is bounded from $T_{2,w}^q(X)$ to $L_w^q(X)$. Notice that
$$
    (r_B^2 L)^i b=r^{2M}_{B}\int_0^\infty  (t^2L)^{n+1+i}
\Phi^3(t\sqrt{L}) \Big(\frac{t^{2(M-i)}}{r_B^{2(M-i)}}\mathfrak{a}(\cdot,t)\Big)\,\frac{dt}{t}=r_B^{2M}\pi_{\Psi,L,i}\Big(\frac{t^{2(M-i)}}{r_B^{2(M-i)}}\mathfrak{a}(\cdot,t)\Big),
$$
we have
\begin{eqnarray*}
    \| (r_B^2 L)^i b\|_{L_w^q(X)}&=&r_B^{2M}\Big\|\pi_{\Psi,L,i}\Big(\frac{t^{2(M-i)}}{r_B^{2(M-i)}}\mathfrak{a}(\cdot,t)\Big)\Big\|_{L_w^q(X)}\\
    &\leq& C r_B^{2M} \Big\|\frac{t^{2(M-i)}}{r_B^{2(M-i)}}\mathfrak{a}\Big\|_{T_{2,w}^q(X)}\leq C r_B^{2M} w(B)^{1/q-1/p},
\end{eqnarray*}
by using  the fact $t\leq r_B$.

From the above, $\pi_{\Psi,L}(\mathfrak{a})$ is a  $(p,q,M,w)$-atom up to a normalization by a  multiplicative constant, and we complete the proof.
\hfill $\Box$

\medskip

Now we turn to show (i)  of  Theorem~\ref{th1.2} by applying Proposition~\ref{prop:TentHardy}.

\medskip

\noindent{\it Proof of  (i) of Theorem~\ref{th1.2}.} \ For  $f\in H_{L,w}^p(X)\cap L^2(X)$, where $w\in A_s$ with $1< s<\infty$, by the spectral theory (see for example \cite{Y}),  we have the following ``Calder\'on reproducing formula",
\begin{equation}\label{eqn:Calderon}
    f(x)=c_{\Psi}\int_0^\infty \Psi(t\sqrt{L})t^2{L}e^{-t^2{L}}f(x)\,\frac{dt}{t},
\end{equation}
with the integral converging in $L^2(X)$, where $\Psi(x):=x^{2\alpha}\Phi^3(x)$, $\alpha=M+ n+1$, and $\Phi$ as that of Lemma \ref{le2.5}.
 Let $F(x,t):=t^2L e^{-t^2 L}f(x)$ for $(x,t)\in X\times (0,+\infty)$.  Clearly,  $F\in T_{2,w}^p(X)$ with
\begin{equation}\label{TentFaux}
      \|F\|_{T_{2,w}^p(X)}=\big\|f\big\|_{H_{L,w}^p(X)}.
\end{equation}
Hence, one may apply  Theorem~\ref{thm:tentatom} to obtain $F=\sum_{j,k\in \mathbb{Z}}\lambda_j^k \mathfrak{a}_j^k(x,t)$ almost  everywhere, where $\mathfrak{a}_j(x,t)$ are $q$-atoms of $T_{2,w}^p(X)$, for any $q\geq s$,  and
\begin{equation}\label{eqn:lambda}
    \Big( \sum_{j,k} |\lambda_j^k|^p\Big)^{1/p}\leq C \|F\|_{T_{2,w}^p(X)}=C\big\|f\big\|_{H_{L,w}^p(X)}.
\end{equation}
Thus, one can write
$$
    f(x)=\sum_{j,k\in \mathbb{Z}}\lambda_j^k  {a}_j^k(x),
$$
where
$$
    a_j^k:=c_\Psi \pi_{\Psi,L}(\mathfrak{a}^k_j)=c_\Psi \int_0^\infty \Psi(t\sqrt{L}) (\mathfrak{a}_j^k(\cdot, t))(x)\,\frac{dt}{t}.
$$
Noting $q\geq s$,  we can apply (ii) of Proposition~\ref{prop:TentHardy} to get that for every $j,k\in \mathbb{Z}$, $a_j^k$ is  a $(p,q,M,w)$-atom (up to multiplication by some harmless constant).  By Remark \ref{atom q1>q2}, $a_j^k$ is also  a $(p,r,M,w)$-atom, for any $1<r<s$.

It remains to  verify that  the sum $f(x)=\sum_{j,k\in{\Bbb Z}}\lambda_j^k a_j^k$ converges in $L^2(X)$. In fact, it follows from combining \eqref{e3.13}  that $F=t^2Le^{-t^2L}f\in T_2^2(X)$ since $f\in L^2(X)$. Then Remark~\ref{rem:convergence} tells that $F=\sum_{j,k\in \mathbb{Z}}\lambda_j^k \mathfrak{a}_j^k(x,t)$ converges in $T_2^2(X)$. By applying a special (unweighted) case of Proposition~\ref{prop:TentHardy}, $\pi_{\Psi,L}$ is bounded from $T_2^2(X)$ to $L^2(X)$.  Hence $f(x)=\sum_{j,k\in{\Bbb Z}} \lambda_j^k a_j^k$ converges in $L^2(X)$  by applying Lemma~4.12 in \cite{HLMMY}.
\hfill $\Box$

\vskip 0.2cm

Let us  state a lemma  before we  prove the part (ii) of Theorem~\ref{th1.2}.   .
\begin{lemma}\label{le4.1}
Fix $M\in {\Bbb N}$, $w\in A_\infty$ and $0<p\leq 1$. Assume that $T$ is a non-negative sublinear  operator, satisfying the weak-type (2,2) bound
\begin{equation}\label{e4.weak}
    \mu\big(\big\{x\in X: |Tf(x)|>\lambda\big\} \big)\leq \,C_T\, \, \lambda^{-2}
    \|f\|_{L^2(X)}^2,\quad \mbox{ for all}\  \lambda>0,
\end{equation}
and that for every $(p,q,M,w)$-atom $a$, we have
\begin{equation}\label{e4.1a}
    \|Ta\|_{L^p_w(X)}\leq C
\end{equation}
\noindent with constant $C$ independent of $a$. Then  $T$ is bounded from $\mathbb{H}_{L,w}^{p,q,M}(X)$ to $L^p_w(X)$, and
$$
    \big\|Tf\big\|_{L^p_w(X)}\leq C \big\|f\big\|_{\mathbb{H}_{L,w}^{p,q,M}(X)}.
$$
Consequently, by density, $T$ extends to a bounded operator from $H_{L,w}^{p,q,M}(X)$ to $L_w^p(X)$.
\end{lemma}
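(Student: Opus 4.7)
My plan is to follow the standard template for extending atom-wise boundedness to the whole atomic Hardy space, exploiting the sublinearity of $T$, the weak-type $(2,2)$ bound to control the tail of the atomic expansion, and the subadditivity $(\sum b_j)^p\leq \sum b_j^p$ valid for $0<p\leq 1$.

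First I would fix $f\in\mathbb{H}^{p,q,M}_{L,w}(X)$ and a representation $f=\sum_{j=0}^\infty \lambda_j a_j$ converging in $L^2(X)$, where each $a_j$ is a $(p,q,M,w)$-atom and $\sum_j |\lambda_j|^p\leq 2\|f\|_{\mathbb{H}^{p,q,M}_{L,w}}^p$. Writing $g_N:=\sum_{j=0}^N \lambda_j a_j$, I have $g_N\to f$ in $L^2(X)$. Sublinearity of $T$ gives the two-sided inequality $\bigl||Tf(x)|-|Tg_N(x)|\bigr|\leq |T(f-g_N)(x)|$, so the weak-type bound (\ref{e4.weak}) yields, for every $\varepsilon>0$,
\begin{equation*}
\mu\bigl(\{x:\,||Tf(x)|-|Tg_N(x)||>\varepsilon\}\bigr)\leq C_T\varepsilon^{-2}\|f-g_N\|_{L^2(X)}^2\longrightarrow 0.
\end{equation*}
Hence $Tg_N\to Tf$ in measure, and passing to a subsequence I may assume $Tg_{N_k}\to Tf$ $\mu$-a.e.

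Next, the sublinearity of $T$ also yields the pointwise bound $|Tg_N(x)|\leq \sum_{j=0}^N|\lambda_j|\,|Ta_j(x)|$. Since $0<p\leq 1$, this gives
\begin{equation*}
|Tg_N(x)|^p\leq \Bigl(\sum_{j=0}^N|\lambda_j|\,|Ta_j(x)|\Bigr)^p\leq \sum_{j=0}^N|\lambda_j|^p\,|Ta_j(x)|^p,
\end{equation*}
and integrating against $w\,d\mu$ together with the atom-wise hypothesis (\ref{e4.1a}) produces
\begin{equation*}
\|Tg_N\|_{L^p_w(X)}^p\leq \sum_{j=0}^N|\lambda_j|^p\,\|Ta_j\|_{L^p_w(X)}^p\leq C\sum_{j=0}^N|\lambda_j|^p.
\end{equation*}
Applying Fatou's lemma along the a.e.-convergent subsequence $Tg_{N_k}\to Tf$ now gives
\begin{equation*}
\|Tf\|_{L^p_w(X)}^p\leq \liminf_{k\to\infty}\|Tg_{N_k}\|_{L^p_w(X)}^p\leq C\sum_{j=0}^\infty|\lambda_j|^p,
\end{equation*}
and taking the infimum over admissible atomic representations of $f$ yields $\|Tf\|_{L^p_w(X)}\leq C\|f\|_{\mathbb{H}^{p,q,M}_{L,w}(X)}$. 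Finally, $\mathbb{H}^{p,q,M}_{L,w}(X)$ is dense in $H^{p,q,M}_{L,w}(X)$ by the definition of the latter as its completion, so $T$ extends by continuity to a bounded operator from $H^{p,q,M}_{L,w}(X)$ into $L^p_w(X)$.

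The only delicate point is the transition ``$Tg_N\to Tf$ in measure $\Rightarrow$ a.e.\ convergence of a subsequence, then Fatou,'' which is what legitimately lets the $L^2$-convergence of the atomic expansion be converted into control of $|Tf|$ by the sum $\sum|\lambda_j||Ta_j|$; everything else is routine sublinear manipulation and the $p$-subadditivity trick.
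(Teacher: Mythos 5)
Your proof is correct and follows essentially the same route as the paper: both arguments apply the weak-type $(2,2)$ bound to the tail $f-\sum_{j\le N}\lambda_j a_j$ of the $L^2$-convergent atomic expansion and then invoke the $p$-subadditivity $(\sum_j b_j)^p\le\sum_j b_j^p$ together with the uniform atom bound \eqref{e4.1a}. The only (immaterial) difference is that the paper deduces the pointwise domination $|Tf(x)|\le\sum_j|\lambda_j|\,|Ta_j(x)|$ a.e.\ directly from the measure estimate, whereas you reach the same conclusion via convergence in measure, an a.e.-convergent subsequence, and Fatou's lemma.
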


\begin{proof}
The proof of this lemma is similar to that of Lemma 4.3 in \cite{HLMMY}, where $w(x)=1$ and $p=1$. We give its proof for completeness.

For $f\in    \mathbb{H}_{L,w}^{p,q,M}(X)$, where $f=\sum_{j=0}^\infty \lambda_j a_j$ is an atomic $(p,q,M,w)$-representation such that
$$
      \|f\|_{\mathbb{H}_{L,w}^{p,q,M}(X)} \approx \Big(\sum_{j=0}^\infty |\lambda_j|^p\Big)^{1/p}.
$$
Since the sum above converges in $L^2(X)$ from Definition~\ref{def:Hatom}, then $\displaystyle \underset{N\to \infty}{\lim\sup}\,\|f^N\|_{L^2(X)} =0$, where $f^N=\sum_{j>N} \lambda_j a_j$ with $N\in \mathbb{N}$. Moreover, for a non-negative sublinear operator $T$, we know for every $\varepsilon>0$,
\begin{eqnarray*}
	& &\mu\Big\{x\in X:\  \   |Tf(x)|-\sum_{j=0}^\infty |\lambda_j|\,|Ta_j(x)| >\varepsilon \Big\} \\
	&\leq& \underset{N\to \infty}{\lim\sup}\    \mu\Big\{x\in X:\  \   |Tf(x)|-\sum_{j=0}^N |\lambda_j|\,|Ta_j(x)| >\varepsilon \Big\}\\
	&\leq& \underset{N\to \infty}{\lim\sup}\    \mu\big\{x\in X:\  \  |T(f^N(x))|>\varepsilon \big\}\\
	&\leq& C \varepsilon^{-2} \,\underset{N\to \infty}{\lim\sup} \,\|f^N\|_2^2 =0,
\end{eqnarray*}
where the last inequality above follows from the weak-type $(2,2)$ of $T$. Therefore,
$$
    |Tf(x)|\leq \sum_{j=0}^\infty |\lambda_j|\, |Ta_j(x)|
$$
for almost every $x\in X$. This, together with \eqref{e4.1a}, deduces
\begin{eqnarray*}
	\|Tf\|_{L_w^p(X)}^p \leq \sum_{j=0}^\infty |\lambda_j|^p \, \|Ta_j\|_{L_w^p(X)}^p\leq C \sum_{j=0}^\infty |\lambda_j|^p  \approx \|f\|_{\mathbb{H}_{L,w}^{p,q,M}(X)} ^p
\end{eqnarray*}
for $0<p\leq 1$, as desired.
 \end{proof}

\vskip 0.1cm

\noindent{\it Proof of  (ii) of Theorem~\ref{th1.2}.} \ It is clear that $\mathbb{H}_{L,w}^{p,q,M}(X)\subseteq L^2(X)$.
From Lemma \ref{le4.1},  it remains to establish a uniform $L^p_w(X)$ bound on atoms. That is to say,  there exists a constant $C>0$ such that for every $(p,q,M,w)$-atom $a$ associated to a ball $B=B(x_B,r_B)$ ($x_B$ is the center of $B$, $r_B$ is the radius of $B$),
\begin{eqnarray}\label{ee4.4}
    \big\|S_L(a)\big\|_{L^p_w(X)}\leq C.
\end{eqnarray}
 One can write
\begin{eqnarray*}
    \int \big(S_L(a)(x)\big)^p w(x)\,d\mu(x)&=& \int_{2B} \big(S_L(a)(x)\big)^p w(x)\,d\mu(x) +
    \int_{(2B)^c}  \big(S_L(a)(x)\big)^p w(x)\,d\mu(x) \\
    &=:& {\rm I}_1+ {\rm I}_2.
\end{eqnarray*}

Since $w\in A_s$ and $1< s\leq q$, then $w\in A_q$.   It follows from  Lemma \ref{sfunction} and H\"older's inequality that
\begin{eqnarray*}
{\rm I}_1&\leq
&\Big(\int_{2B}\big(S_L(a)(x)\big)^qw(x)\, d\mu(x)\Big)^{p/q}\Big(\int_{2B}
w(x)\,d\mu(x)\Big)^{1-p/q}\\
&\leq& C\big\|a\big\|_{L^q_w(X)}^p w(2B)^{1-p/q}\\
&\leq& C w(B)^{p(1/q-1/p)}w(B)^{1-p/q}\leq C.
\end{eqnarray*}

\vskip 0.1cm

Let us consider ${\rm I}_2$.  Since $M>(s-p)n/(2p)$, we can choose a  number $N$ such that $2M>N>(s-p)n/p$.
For any $x\in (2B)^c$, one writes
\begin{eqnarray*}
S^2_L(a)(x)&=&\Big(\int_0^{r_B}+\int^{+\infty}_{r_B}\Big)
\int_{d(x,y)<t}\big|t^2Le^{-t^2L}a(y)\big|^2\,\frac{d\mu(y)}{V(x,t)}\frac{dt}{t}
=:\rm{J}_1+\rm{J}_2.
\end{eqnarray*}

\noindent We observe that $x\in (2B)^c,\  z\in B$ and $d(x,y)<t$
imply that $V(x,t)\approx V(y,t)$ and $r_B\leq d(x,z)<t+d(y,z)$, thus $d(x,x_B)\leq d(x,y)+d(y,z)+d(z,x_B)<3(t+d(y,z))$.  This, combined with the doubling volume property, deduces
$$
    \frac{1}{V(y,t)}\approx \frac{1}{V(x,t)} \leq C\left(\frac{t+d(y,z)}{t}\right)^n \frac{1}{V(x,3(t+d(y,z)))}\leq C\left(\frac{t+d(y,z)}{t}\right)^n \frac{1}{V(x,d(x,x_B))}.
$$

Hence one may apply  Lemma \ref{le2.6} to see
\begin{eqnarray*}%\label{eeee3.1}
\rm{J}_1&\leq& C\int_0^{r_B}\!\!\int_{d(x,y)<t}\Big(\int
\frac{t^{N+n}}{V(y,t) \big(t+d(y,z)\big)^{N+n}}|a(z)|\,d\mu(z)
\Big)^2\,\frac{d\mu(y)}{V(x,t)}\frac{dt}{t}\\
&\leq& C\frac{1}{{V(x,d(x,x_B))}^2}\int_0^{r_B}\!\!\int_{d(x,y)<t}\Big(\int
\frac{t^{N}}{{d(x,x_B)}^{N}}|a(z)|\,d\mu(z)
\Big)^2\,\frac{d\mu(y)}{V(x,t)}\frac{dt}{t}\\
&\leq&
C\frac{\|a\|^2_{L^1(X)}}{V(x,d(x,x_B))^2d(x,x_B)^{2N}}\int_{0}^{r_B} t^{2N-1}\,dt\nonumber\\
&\leq&
C\frac{r_B^{2N}}{V(x,d(x,x_B))^2 d(x,x_B)^{2N}}\,\|a\|^2_{L^1(X)}.\nonumber
\end{eqnarray*}

\noindent Consider the  term ${\rm J}_2$. Notice that $a=L^Mb$, we apply Lemma \ref{le2.6} to obtain
\begin{eqnarray*}%\label{eeee3.2}
{\rm J}_2&=&\int_{r_B}^\infty
\int_{d(x,y)<t}\big|{t^2}Le^{-t^2L}(L^Mb)(y)\big|^2\,
\frac{d\mu(y)}{V(x,t)}\frac{dt}{t}\\
&=& \int_{r_B}^\infty
\int_{d(x,y)<t}\big|(t^2L)^{M+1}e^{-t^2L}b(y)\big|^2\, \frac{d\mu(y)}{V(x,t)}\frac{dt}{t^{4M+1}}
\nonumber\\
&\leq& C\int_{r_B}^\infty\int_{d(x,y)<t}\Big(\int_B
\frac{t^{N+n}}{V(x,t)\big(t+d(y,z)\big)^{N+n}}|b(z)|\,d\mu(z)
\Big)^2\,\frac{d\mu(y)}{V(x,t)}\frac{dt}{t^{4M+1}}\nonumber\\
&\leq&
C\frac{\|b\|^2_{L^1(X)}}{V(x,d(x,x_B))^2d(x,x_B)^{2N}} \int_{r_B}^\infty \frac{dt}{ t^{4M-2N+1}}\,\nonumber\\
&\leq& C\frac{r_B^{2N-4M}}{V(x,d(x,x_B))^2d(x,x_B)^{2N}}\|b\|^2_{L^1(X)},\nonumber
\end{eqnarray*}
since $M>N/2.$

By the estimates of $\rm{J}_1$ and $\rm{J}_{2},$  we obtain
\begin{eqnarray*}
{\rm I}_2&\leq&
C\int_{(2B)^c}\frac{r_B^{Np}}{V(x,d(x,x_B))^p\,d(x,x_B)^{Np}}w(x)\,
d\mu(x)\,\Big(\|a\|_{L^1(X)}^p+r_B^{-2Mp}\|b\|_{L^1(X)}^p\Big).
\end{eqnarray*}

\noindent Using H\"older's inequality and the definition of $(p,q,M,w)$-atom, we get
\begin{eqnarray}\label{eee3.5}
\|a\|_{L^1(X)}+r_B^{-2M}\|b\|_{L^1(X)}&\leq& C w(B)^{1/q-1/p}\Big(\int_B w(x)^{{-q'}/{q}}\,d
\mu(x)\Big)^{{1}/{q'}}\nonumber\\
&\leq& C V(B)w(B)^{-1/p},
\end{eqnarray}
where in the last inequality above we have used the fact  $w\in A_s\subseteq A_q$ and the definition of $A_q$ weights. Moreover, it follows from (iv) of Lemma~\ref{lee2.1}  that,   $ w(2^{k+1}B)\leq C 2^{ns}  w(2^k B)$, $w(2^kB)\leq C2^{kns}  w(B)$ and
\begin{align}\label{ineq:weight}
    \bigg(\frac{V(B)}{V(2^k B)}\bigg)^p\leq [w]_{A_s}^{p/s} \bigg(\frac{w(B)}{w(2^k B)}\bigg)^{p/s}.
\end{align}
for every $k\geq 1$.
Therefore,
\begin{eqnarray}\label{eee3.6}
& &\int_{(2B)^c}\frac{r_B^{Np}}{V(x,d(x,x_B)) ^p\, d(x,x_B)^{Np}}w(x)\,d\mu(x) \nonumber\\
&\leq&
C\sum\limits_{k=1}^{\infty}\int_{2^{k+1}B\backslash 2^kB}\frac{r_B^{Np}}{V(2^k B)^p\,d(x,x_B)^{Np}}w(x)\,d\mu(x)\nonumber\\
&\leq&
C_{n,w,p,s}\sum\limits_{k=1}^{\infty} \frac{r_B^{Np}}{ (2^kr_B)^{Np}} \,  V(B)^{-p}\bigg(\frac{w(B)}{w(2^{k} B)}\bigg)^{p/s}w(2^{k+1}B) \\
&\leq&
C_{n,w,p,s}\sum\limits_{k=1}^{\infty} 2^{-kNp} V(B)^{-p} w(B)^{p/s} \big( 2^{kns }w(B) \big)^{1-p/s}    \nonumber\\
&\leq& C_{n,w,p,s} V(B)^{-p} w(B), \nonumber
\end{eqnarray}
where in the second inequality above we have used \eqref{ineq:weight} and in the last inequality above we have used $N>(s-p)n/p$. It follows from estimates (\ref{eee3.5}) and (\ref{eee3.6}) that ${\rm I}_2\leq C$,  which completes the proof of (\ref{ee4.4}) and the proof
of part (ii) of Theorem~\ref{th1.2}.
\hfill$\Box$

\bigskip

\noindent{\bf Acknowledgments.} We would like to thank Lixin Yan for helpful discussions.  L.~Song is supported by  NNSF of China (No.~11622113) and NSF for distinguished Young Scholar of Guangdong Province (No.~2016A030306040).

 \bigskip

\end{document}